\newtheorem{theorem}{Theorem}
\newtheorem{prop}{Proposition}[section]
\newtheorem{lemma}[prop]{Lemma}
\newtheorem{cor}[prop]{Corollary}
\newtheorem*{lemma*}{Lemma}
\theoremstyle{definition}
\newtheorem{Def}[prop]{Definition}
\theoremstyle{remark}
\newtheorem{rem}[prop]{Remark}
\numberwithin{equation}{section}
\newcommand{\rr}{\mathbb{R}}
\newcommand{\N}{\mathbb{N}}
\newcommand{\R}{\mathbb{R}}
\newcommand{\Z}{\mathbb{Z}}
\newcommand{\C}{\mathbb{C}}
\newcommand{\T}{\mathcal{T}}
\newcommand{\M}{\mathcal{M}}
\newcommand{\W}{\mathcal{W}}
\renewcommand{\a}{\mathfrak a}
\renewcommand{\Re}{\operatorname{Re}}
\renewcommand{\Im}{\operatorname{Im}}
\DeclareMathOperator{\Res}{Res}
\DeclareMathOperator{\comp}{comp}
\DeclareMathOperator{\Op}{Op}
\DeclareMathOperator{\rank}{rank}
\DeclareMathOperator{\supp}{supp}
\DeclareMathOperator{\WF}{WF}\def\WFh{\WF_h}
\DeclareMathOperator{\Tr}{Tr}
\DeclareMathOperator{\ran}{ran}
\DeclareMathOperator{\Diffeo}{Diffeo}
\newcommand{\mc}{\mathcal}
\newcommand{\cjg}{\langle}
\newcommand{\cjd}{\rangle}
\newcommand{\la}{\lambda}
\newcommand{\pl}{\partial}
\newcommand{\eps}{\epsilon}
\newcommand{\bbar}{\overline}
\newcommand{\tu}[1]{\textup{#1}}
\newcommand{\Abb}[4]{\begin{cases}\begin{aligned} #1 & \rightarrow  #2 \\ #3 &\mapsto  #4\end{aligned}\end{cases}}
\definecolor{blue-violet}{rgb}{0.54, 0.17, 0.89}
\definecolor{blue-green}{rgb}{0.0, 0.87, 0.87}
\definecolor{auburn}{rgb}{0.43, 0.21, 0.1}
\definecolor{beaver}{rgb}{0.62, 0.51, 0.44}
\definecolor{cadmiumorange}{rgb}{0.93, 0.53, 0.18}
\definecolor{carrotorange}{rgb}{0.93, 0.57, 0.13}
\title[SRB measures for Anosov actions]{SRB measures for Anosov actions}
\author[Y. Guedes Bonthonneau]{Yannick Guedes Bonthonneau}
\email{bonthonneau@math.univ-paris13.fr}
\address{LAGA, Institut Galil\'ee, 99 avenue Jean Baptiste cl\'ement, 93430 Villetaneuse, France.}
\author[C. Guillarmou]{Colin Guillarmou}
\email{colin.guillarmou@universite-paris-saclay.fr}
\address{Universit\'e Paris-Saclay, CNRS,  Laboratoire de math\'ematiques d'Orsay, 91405, Orsay, France.}
\author[T. Weich]{Tobias Weich}
\email{weich@math.upb.de}
\address{Universit\"at Paderborn, Warburgerstr. 100, 33098 Paderborn, Germany}
\date{\today}
\begin{document}

\begin{abstract}
Given a general Anosov $\R^\kappa$ action on a closed manifold, we study properties of certain invariant measures that have recently been introduced in \cite{BGHW20} using the theory of Ruelle-Taylor resonances. We show that these measures share many properties of Sinai-Ruelle-Bowen measures for general Anosov flows such as smooth disintegrations along the unstable foliation, positive Lebesgue measure basins of attraction and a Bowen formula in terms of periodic orbits. Finally we show that if the action in the positive Weyl chamber is transitive, the measure is unique and has full support.
\end{abstract}

\maketitle
\sloppy

\section*{Introduction}

On a closed, smooth Riemannian manifold $(\M, g)$ (normalized with volume $1$) we consider a locally free abelian action $\tau :\R^\kappa\to \Diffeo(\M)$. Assume that $\tau$ is Anosov, and denote by $\mathcal{W}\subset \R^\kappa$ the maximal cone of transversely hyperbolic elements (see Section~\ref{sec:anosov_action} for a precise definition of all these terms). In \cite{BGHW20} it was proved that there exists a Radon probability measure $\mu$, called \emph{the equilibrium measure}, such that for every function $f\in C^0(\M)$, every open proper\footnote{proper meaning that $\overline{ \mc{C}}\setminus \{0\}\subset \mc{W}$.} subcone $\mathcal{C}\subset \W$ and linear form $e$ on $\R^\kappa$, positive on slightly larger cone then $\mc{C}$, the following holds true
\begin{equation}
 \label{eq:Cone-averaging-physical-measure}
\mu(f) = \lim_{T\to +\infty} \frac{1}{|\mathcal{C}_T|} \int_{A\in\mathcal{C}_T} \int_\M f(\tau(-A)(x)) dv_{g}(x) dA.
\end{equation}
Here $\mathcal{C}_T = \{ A \in \mathcal{C}\ |\ e(A) \leq T\}$ and $|\mc C_T|$ denotes the Euclidean volume of 
$\mc{C}_T\subset \R^\kappa$ and $dv_g$ is the Riemannian density on $\mc{M}$ induced by $g$.\footnote{Note that the right hand side of \eqref{eq:Cone-averaging-ergodic-SRB-measure} a priori depends on the choice of subcone $\mc C$, the linear from $e$ and the metric $g$, but it is part of the statement that the expression is in fact independent of $e$ and $\mc C$ and -- provided that the Anosov action is transitive -- also independent of $v_g$.} In this article we will explore the properties of the measure $\mu$, proving in particular:
\begin{theorem}\label{thm:SRB_intro}
Let $\tau$ be a transitive, smooth, locally free, $\R^\kappa$ Anosov action. Let $\mu$ be an invariant Radon probability measure on $\M$, then the following conditions are equivalent:
\begin{enumerate}
  \item \label{it:equilibrium}$\mu$ is the equilibrium measure which then does not depend on the choice of $v_g$.
  \item \label{it:basin} For every continuous $f$, every open proper subcone $\mathcal{C}\subset\W$, and Lebesgue almost every $x\in \M$,
  \[
  \mu(f) = \lim_{T\to +\infty} \frac{1}{|\mathcal{C}_T|} \int_{A\in\mathcal{C}_T} f(\tau(-A)(x)) dA.
  \]
  \item \label{it:abs_cont} The measure $\mu$ has an absolute continuous disintegration\footnote{in the sense of Definition~\ref{def:abscont}} with respect to the local stable foliation, $W^s_{\mathrm{loc}}$ and is thus a Sinai-Ruelle-Bowen (SRB) measure\footnote{in the sense of Definition~\ref{def:SRB}}.
  \item \label{it:WF} The measure $\mu$ has wavefront set\footnote{See \cite[Section 8.1]{Hoe03} for a classical introduction to wavefront sets}  $\WF(\mu)\subset E_s^*$.
\end{enumerate}
Such a measure $\mu$ is always ergodic. If in addition we assume that the action is positively transitive in the sense of Definition~\ref{def:pos_trans}, then $\supp(\mu)=\mc{M}$.
\end{theorem}

In the rank one case, i.e. the case of Anosov flows, the properties (\ref{it:equilibrium}), (\ref{it:basin}) and (\ref{it:abs_cont}) are very similar to the classical properties of the SRB measure for transitive Anosov flows, which was studied extensively by Sinai, Ruelle and Bowen \cite{SinaiSRB,BowenSRB,RuelleSRB,BR75}. The characterisation (\ref{it:WF}) of the equilibrium or SRB measure in terms of wavefront sets is a more modern point of view. For Anosov flow this characterization can be derived by combining the results of Butterley-Liverani \cite{BL07} and Faure-Sjöstrand \cite {FS11} (although the characterization of SRB measures in terms of wavefront sets is not explicitely formulated in these references).
As for the results of this article, we also obtain a more general and more detailed version (Theorem~\ref{thm:physical}) without the transitivity assumption.

For a given smooth Anosov flow, the structural stability implies that any small perturbation of the flow is again an Anosov flow. Furthermore for any fixed Anosov flow, one can associate with each potential $V$ a so-called invariant Gibbs measure that has positive entropy. The world of smooth Anosov flows is thus very rich and, for a fixed flow, there is also a rich ergodic theory due to the different Gibbs measures. In contrast, for higher rank Anosov action the situation is conjectured (and partially known) to be very rigid: in \cite{KaSp94} Katok and Spatzier proved that for a list of algebraic Anosov actions, called \emph{standard Anosov actions}, any small perturbation of the Anosov action is Hölder conjugate to the original action.
Until recently it was conjectured  (see \cite[Conjecture 16.8]{Has07}) that whenever a higher rank Anosov action cannot be factored into a product of an Anosov flow with another action, they are algebraic in the sense that they come from quotients of symmetric spaces or Lie groups. Assuming that this rigidity conjecture holds, the classification of invariant measures reduces to analyzing homogeneous dynamics, i.e $\R^\kappa$ invariant measures on homogeneous spaces. Such measure classifications in homogeneous dynamics  have been intensively studied in the past decades, starting with the works of Katok and Spatzier \cite{Katok-Spatzier96,Katok-Spatzier98} and culminating in more recent works of Einsiedler, Katok and Lindenstrauss \cite{EKL06, EL15}.

Very recently it however turned out that the rigidity conjecture is much more subtle than expected: while for the special case of totally Cartan Anosov actions the rigidity  conjecture has recently be  proven \cite{SV19}, there were also first counter examples in the general setting.
Indeed, in  \cite{Vin22} Vinhage presented an approach to construct non algebraic Anosov actions that have no rank one factor, by applying a non trivial time change to the product of two Anosov flows.
If at least one of the Anosov flows has a non smooth SRB measure, this example leads to Anosov actions without rank one factors that have a non smooth SRB measure. This gives another motivation of considering SRB measures in this setting without assuming that the dynamics is homogeneous.

No matter in which way the rigidity conjecture has to be modified (see \cite{Vin22} for different options), in order to make progress in this direction it is important to understand as many dynamical properties of Anosov actions as possible, without assuming that these actions are homogeneous. In particular it is important to understand and to construct meaningful invariant measures\footnote{As explained to us by Ralf Spatzier, the existence of ergodic measures with full support is an important tool in the direction of proving the rigidity conjecture (see e.g. \cite{Kalinin-Spatzier} where this assumption is crucially used, as well as the discussions in \cite{SV19}).}.

Let us mention two results that are related to Theorem~\ref{thm:SRB_intro} in this direction: in \cite{Kalinin-Katok-Rodriguez}, Kalinin-Katok-Rodriguez Hertz obtain the following: for a locally free abelian Anosov action with $\dim \mc{M}=2\kappa+1$ with $\kappa\geq 2$, an invariant ergodic measure $\mu$ which has positive entropy for some $A\in \mathbb \R^\kappa$ is absolutely continuous under certain assumptions on the Lyapunov exponents and hyperplanes of $\mu$ (it is thus the same as our SRB measure).

Independently to our work and with different methods, Carrasco and Rodriguez-Hertz \cite{CaRH21} (see also \cite{CRH21, CRH21b} for a review of the results and further applications) have  constructed equilibrium measures for center isometries and proved the existence and uniqueness of an invariant measure that has absolutely continuous conditional measures with respect to $W^s_{\textup{loc}}$. If their center isometry stems from a regular element of a general Anosov action, they show that there is a unique measure fulfilling the condition (\ref{it:abs_cont}) of Theorem 1. This provides an alternative construction of the same measure.
They use geometric methods and first construct leaf-wise measures that they combine in a sophisticated way to an invariant measure.
An advantage of their method is that it requires less assumptions (they only assume $C^2$-regularity and do not require that the center isometry stems from an Anosov action) and they also construct a measure of maximal entropy and prove the Bernoulli property. On the other hand the characterisations (1), (2) and (4) of Theorem~\ref{thm:SRB_intro} are not shown in \cite{CaRH21}. Another aspect of our spectral/microlocal approach that is absent in \cite{CaRH21} is a Bowen-like \cite{bowen1972,Parry-Pollicott-90} formula for the measure $\mu$, showing equidistribution of regular periodic orbits in arbitrary small cones for higher rank actions. 

In order to state the second main result of this paper let us first introduce some notation: A point $x\in \mc{M}$ is said to be a periodic point if there exists $A\in\R^\kappa\setminus \{0\}$ such that $\tau(A)(x)=x$. Periodic orbits may have a complicated shape in general, but it is well-known (see e.g. \cite[Sec. 2, Prop. 1]{spatzierthesis} for a statement for Weyl chamber flows)\footnote{for the sake of self completeness we also provide a short proof in  Lemma~\ref{lem:torus}} that if $\tau(A_0)(x)=x$ for some $A_0\in \mc{W}$, then the orbit set $T=T_x:=\{\tau(A)(x)\in \mc{M}\,|\, A\in \R^\kappa\}$ is a $\kappa$-dimensional torus -- we say that the orbit is regular. We denote by $\mc{T}$ the set of such periodic tori of $\tau$ and, for $T\in \mc{T}$,
we denote by $L(T):=\{A\in \R^\kappa\,|\,\exists x\in T: \tau(A)(x)=x\}$ the associated lattice. Pushing forward the Lebesgue measure on $\R^\kappa/L(T)$ to $T\subset \M$, we obtain a natural measure on each torus orbit which we denote by $\lambda_T$.

\begin{theorem}\label{thm:formuledebowenintro}
Let $\tau$ be a transitive $\R^\kappa$-Anosov action, with Weyl chamber $\W$. Let $\mc{C}\subset\W$ be a proper subcone and $\eta\in {\R^\kappa}^*$ a dual element that is positive on a slightly larger conic neighbourhood of $\mc C$. Define $\mc{C}_{a,b}:=\{A\in \mc{C}\,|\, \eta(A)\in [a,b]\}$ if $a,b>0$. Let $\mu$ be the SRB measure and $a,b>0$. Then for each $f\in C^0(\M)$, we have
\begin{equation}\label{eq:bowenformula}
\mu(f)=\lim_{N\to \infty}\frac{1}{|\mc{C}_{aN,bN}|} \sum_{T\in \mc{T}}
\sum_{A\in \mc{C}_{aN,bN}\cap L(T)}\frac{\int_{T}f\,\,d\lambda_T}{|\det(1-\mc{P}_{A})|}
\end{equation}
where $\mc{P}_A$ is the linearized Poincar\'e map of the periodic orbit $A$ restricted to $E_u\oplus E_s$.
\end{theorem}

This result is proved using microlocal methods inspired by Dyatlov-Zworski \cite{DZ16a} in the rank $1$ case:
one needs in our setting to combine the Guillemin trace formula with the analysis of the wavefront set of a certain meromorphic function $F_\lambda(X_1,\dots,X_\kappa)$ of the family of commuting vector fields $(X_1,\dots, X_\kappa)$ generating the Anosov action, and this function has a simple pole at $\lambda=0$ with residue given by $\mu(f)$.
The result \eqref{eq:bowenformula} shows some equidistribution of the periodic orbits just as in the rank $1$ case, except that here the periodic orbits come as $\kappa$-dimensional tori.
Note that the most prominent examples of an Anosov action are the Weyl chamber flows for locally symmetric spaces. If ${\bf G}$ is a real semisimple Lie group with Iwasawa decomposition ${\bf G}={\bf KAN}$, these Weyl chamber flows are given by the right ${\bf A}$ action on $\mc{M}=\Gamma\backslash {\bf G}/Z_{\bf K}({\bf A})$ (with $Z_{\bf K}({\bf A})$ the centralizer of ${\bf A}$ in ${\bf K}$)  and the SRB measure is the Haar measure, by uniqueness. Thus Theorem \ref{thm:formuledebowenintro} gives an expression of the Haar measure in terms of periodic tori: by \eqref{det1-P_A}, there is $\eps>0$ so that for all $A\in L(T)\cap \mc{C}$,
$\det(1-\mc{P}_A)=e^{2\rho(A)}(1+\mc{O}(e^{-\eps|A|}))$ where $\rho$ is the half sum of the positive roots of the abelian Lie algebra $\mathfrak a$ of $A$. Therefore \eqref{eq:bowenformula} reduces to
\begin{equation}\label{haarbowen}
\mu(f)=\lim_{N\to \infty}\frac{1}{|\mc{C}_{aN,bN}|} \sum_{T\in \mc{T}}
\sum_{A\in \mc{C}_{aN,bN}\cap L(T)}e^{-2\rho(A)}\int_{T}f\,d\lambda_T.
\end{equation}
We notice that even for locally symmetric space where $\mu$ is the Haar measure, the identities \eqref{eq:bowenformula}
and \eqref{haarbowen} were not known, and our result is new also in
that setting.

As a rather direct consequence of \eqref{eq:bowenformula} we get the following result on the counting of periodic tori:

\begin{cor}\label{cor:comptage}
Assume there is a linear form $\eta\in {\R^\kappa}^*$ that is positive on $\W$ and such that for any proper subcone $\mc C\subset \W$ there is $\varepsilon>0$ such that $|\det(1-\mc P_A)| = e^{\eta(A)}(1-\mc{O}(e^{-\varepsilon|A|}))$ for all $A\in \mc{C}$.
 For any proper subcone $\mc C\subset \W$ let $\mc C_N:= \{A\in \mc C, \eta(A)/\|\eta\|\leq N\}$ then
\[
  \lim_{N\to \infty}\frac{1}{N}\log\Big(\sum_{T\in\mc T}\sum_{A\in L(T)\cap \mc C_{N}} \la_T(T)\Big)= \|\eta\|.
\]
\end{cor}
Note that we will show in the discussion after Proposition~\ref{countingprop} that the above assumptions are fulfilled for all standard Anosov actions (as introduced in \cite[Sec. 2]{KaSp94}). In the special case of Weyl chamber flows, Spatzier \cite{spatzierthesis} proved a related result when the cone $\mc{C}$ is the whole Weyl chamber: more precisely he proved that if
$s(T):=\min \{|A|\,| A\in L(T)\cap \mc{W}\}$ denotes the regular systole of a periodic torus $T$, then
\[
\lim_{N\to \infty}\frac{1}{N}\log\Big(\sum_{T\in\mc{T}, s(T)\leq N}\la_T(T)\Big)=2\|\rho\|.
\]
Recall from above that for Weyl chamber flows one has $\eta=2\rho$ and $2\|\rho\|$ corresponds also to the topological entropy of the associated geodesic flow. The same asymptotics for torus orbits of Weyl chamber flows have been obtained by Deitmar \cite{Dei04} (yet with slightly different counting region) using trace formulae on higher rank locally symmetric spaces and Lefschetz formulae.

Another aspect of our method is that, a byproduct of the proof of Theorem \ref{thm:formuledebowenintro}, we can construct some zeta-like functions (see Theorem \ref{thm:zeta-function}). For each function $\psi\in C^\infty_c(\W)$ with small enough support, we obtain a function $d_\psi(\lambda)$ holomorphic on $\C^\kappa$ that vanishes exactly when $\hat{\psi}(\lambda-\zeta) =1$ for some Taylor-Ruelle resonance $\zeta$ of the action (as was introduced in \cite{BGHW20}). Here $\hat{\psi}$ is the Laplace transform of $\psi$. As far as we know this is the first example of a globally holomorphic zeta-like function for higher rank actions.

Let us mention three results that are related to the Bowen formula in Theorem~\ref{thm:formuledebowenintro} for the special case of the Anosov action being a Weyl chamber flow on a compact locally symmetric space: Knieper \cite{Knieper05} studies the measure of maximal entropy for geodesic flows on compact locally symmetric spaces and showed its uniqueness. From this uniqueness he derives a Bowen formula for $\epsilon$-separated geodesics. Furthermore, Einsiedler, Lindenstrauss, Michel and Venkatesh studied the distribution of torus orbits of Weyl chamber flows in \cite{ELMV09, ELMV11}. In the special case of Weyl chamber flows on $SL(3,\R)/SL(3,\Z)$ they obtain a strong equidistribution result of periodic torus orbits \cite[Theorem 1.4]{ELMV11}  that among others would imply the Bowen type formula above\footnote{Note however that our result does not hold for $SL(3,\R)/SL(3,\Z)$ due to the non compactness of this space}. In \cite{ELMV09} the authors also study  torus orbits on certain compact locally symmetric spaces that are constructed from orders in central simple algebras. They also obtain equidistribution results (see \cite[Corollary 1.7]{ELMV09}) which are, however, weaker than those obtained for $SL(3,\R)/SL(3,\Z)$ and they seem not to imply Theorem~\ref{thm:formuledebowenintro} for this special class of compact locally symmetric spaces. Finally, after the appearance of our paper on arxiv, Dang and  Li \cite{DL22} posted a very interesting preprint where they study equidistribution and counting results of periodic torus orbits for Weyl chamber flows on locally symmetric spaces. They use completely different methods (like structure theory of symmetric spaces, compactifications, Patterson-Sullivan measures etc) and obtain a  result similar to Theorem~\ref{thm:formuledebowenintro} using the refined group orbit counting estimates of Gorodnick-Nevo \cite{GoNe12}.
While their techniques do not allow to localize in arbitrary small subcones for the average, they obtain \eqref{eq:bowenformula} without the determinant factors and with an exponential remainder term. The exponential remainder term is a consequence of property T for such groups, 
and is strongly dependent on the fact that the space is locally symmetric. 
Our spectral approach 
could lead to such an exponential remainder provided one could prove a spectral gap for the Ruelle resonance spectrum. 
Outside the world of locally symmetric space, it is not yet clear what kind of assumptions are needed to prove such a gap, and this difficult question is under investigation; we recall that for Anosov flows, the contact assumption is sufficient \cite{Liv04} to get such a gap. 
Let us finally emphasize that all the above mentioned results use techniques that are restricted to the case of Weyl chamber flows on locally symmetric spaces, whereas Theorem~\ref{thm:formuledebowenintro} holds for arbitrary Anosov actions.\\

Before closing this introduction, let us briefly mention the tools and techniques we employ in this work. We build on
our previous work \cite{BGHW20} using microlocal methods in the spirit of Faure-Sj\"ostrand and Dyatlov-Zworski \cite{FS11,DZ16a} in the
framework of anisotropic spaces (developed originally in dynamical systems by Blank, Keller, Liverani, Baladi, Tsujii, Gou\"ezel, Butterley \cite{BKL02,Gouezel-Liverani06,BL07,BT07}).  These techniques have a successful history in the context of Anosov flows, and we use them intensively in this work. For the proof of Theorem \ref{thm:SRB_intro}, it is sufficient to be familiar with the notion of H\"ormander wavefront set. For the proof of Theorems \ref{thm:formuledebowenintro} and \ref{thm:zeta-function}, however, we assume that the reader is somewhat familiar with more involved techniques, as were used for example in \cite{DZ16a,FRS08}. We will also be using some classical techniques from the study of dynamical foliations (absolute continuity, Rokhlin disintegrations...).\\

\textbf{Outline of the paper.}

In Section~\ref{sec:AnosovFoliations} we introduce and collect different preliminaries which are later used in order to state and prove the main theorems:
In Section~\ref{sec:anosov_action} we give the definition of $\R^\kappa$-Anosov actions and introduce some related basic notations.

In Section~\ref{subsec:AnosovFoliations} we collect and discuss crucial properties of the stable and unstable foliations related to Anosov actions which we shall need in the sequel. In particular we give a proof that the conditional densities of Lebesgue measure along the weak-(un)stable foliations are smooth along the orbits. While this fact seems folklore, we couldn't find a precise reference and as we crucially need this in order to apply our microlocal methods, we took the effort to work this out in details.

In Section~\ref{sec:spec_theory} we  recall how invariant measures for Anosov actions can be constructed using the spectral theory of Ruelle-Taylor resonances as presented in \cite{BGHW20} and we prove some additional facts that facilitate to work with these measures later on.

Section~\ref{sec:basin} and  Section~\ref{sec:bowen} are the core of the paper:

Section~\ref{sec:basin} contains the proofs for the different equivalent characterisations of the invariant  measures (Theorem~\ref{thm:SRB_intro}). This is done in two steps: in Section~\ref{sec:SRB} we prove that the measures obtained by the spectral theory are precisely the SRB measures i.e. measures which are absolutely continuous with respect to stable foliation (Theorem~\ref{thm:absolutecontinuity}). In Section~\ref{sec:physical} we analyze the basins of attractions that have positive Lebesgue measures and prove Theorem~\ref{thm:physical}. Note that neither Theorem~\ref{thm:absolutecontinuity} nor Theorem~\ref{thm:physical} assume transitivity of the action, thus the statements are more general and more precise (but also a little bit more technical to formulate) compared to Theorem~\ref{thm:SRB_intro} that is stated in the introduction above.

In Section~\ref{sec:bowen} we use microlocal analysis and a higher rank Guillemin trace formula to prove the Bowen formula (Theorem~\ref{thm:formuledebowenintro}).

Finally in Section~\ref{sec:counting} we shortly discuss the applications to counting of periodic tori.\\

We also prove some new statements in this context such as  that will allow us to show that the measures defined by spectral theory are always absolutely continuous along the stable foliation.

\textbf{Acknowledgements.}
We warmly thank the $6$ referees for their careful reading, their numerous comments and suggestions in order to improve the paper. We thank S\'ebastien Gou\"ezel for helpful discussions and explaining us the arguments in the proof of Proposition~\ref{prop:smooth_disintegration_Lebesgue}. 
This project has received funding from the European Research Council (ERC) under the European Union’s Horizon 2020 research and innovation programme (grant agreement No. 725967) and from the Deutsche Forschungsgemeinschaft (DFG, German Research Foundation) via the Grands WE 6173/1-1  (Emmy Noether group ``Microlocal Methods for Hyperbolic Dynamics'') as well as SFB-TRR 358/1 2023 — 491392403 (CRC ``Integral Structurs in Geometry and Representation Theory'').

\section{Anosov actions, dynamical foliations and invariant measures}\label{sec:AnosovFoliations}

To study regularity of functions and distributions, we will rely on microlocal techniques in the spirit of \cite{DZ16a}. In particular, we will use the notion of wavefront set, see \cite[Section 8.1]{Hoe03} for an introduction and properties of wavefront set.

\subsection{Anosov actions}\label{sec:anosov_action}

Let $(\mc M, g)$ be a closed Riemannian manifold, $\tau : \mathbb A \to \Diffeo(\M)$ be a locally free action of an abelian Lie group $\mathbb A\cong \R^\kappa$ on $\M$. Let $\mathfrak a:= \tu{Lie}(\mathbb A)\cong \R^\kappa$ be the associated commutative Lie algebra and $\exp:\mathfrak a\to\mathbb A$ the Lie group exponential map. After identifying $\mathbb A\cong \mathfrak a\cong \R^\kappa$, this exponential is the identity, but it will be useful to have a coordinate-free notation that distinguishes between transformations $\mathbb A$ and infinitesimal transformations $\mathfrak a$.
Taking the derivative of the $\mathbb A$-action one obtains an injective Lie algebra homomorphism
\begin{equation}\label{XAnosov}
 X:\Abb{\mathfrak a}{C^\infty(\M;T\M)}{A}{X_A:=\tfrac{d}{dt}_{|t=0}\tau(\exp(At))}
\end{equation}
which we call the infinitesimal action.
By commutativity of $\mathfrak a$, $\ran (X)\subset C^\infty(\M;T\mc M)$ is a $\kappa$-dimensional subspace of commuting vector fields. Since the action is locally free, they span a $\kappa$-dimensional smooth subbundle which we call the \emph{neutral subbundle} or \emph{center subbundle} $E_0\subset T\mc M$. It is tangent to the $\mathbb A$-orbits on $\mc M$. We will often study the one-parameter flow generated by a vector field $X_A$ which we denote by $\varphi^{A}_t$. One has the obvious identity $\varphi^{A}_t = \tau(\exp(At))$ for $t\in \R$.
\begin{Def}\label{def:anosov_action}
An element $A\in \mathfrak a$ and its corresponding vector field $X_A$ are called \emph{transversely hyperbolic} if there is a continuous splitting
\begin{equation}\label{eq:invariant-splitting}
 T\M = E_0\oplus E_u\oplus E_s,
\end{equation}
that is invariant under the flow $\varphi^{A}_t$ and such that there are $\nu>0, C>0$ with
\begin{equation}
 \label{eq:stable}
 \|d\varphi_t^{A}v\|\leq Ce^{-\nu |t|}\|v\|, \quad  \forall v\in E_s,~ \forall t\geq 0,
\end{equation}
\begin{equation}\label{eq:unstable}
 \|d\varphi_t^{A}v\|\leq Ce^{-\nu |t|}\|v\|, \quad  \forall v\in E_u,~ \forall t\leq 0.
\end{equation}
We say that the $\mathbb A$-action is \emph{Anosov} if there exists an $A_0\in \mathfrak a$ such that $X_{A_0}$ is transversely hyperbolic. Observe that if both $E^u$ and $E^s$ are trivial bundles, then $\M$ must be a torus, or a finite quotient thereof. We will rule out this case in all our arguments. 
\end{Def}
We define the dual bundles $E_u^*,E_s^*, E_0^*\subset T^*\M$ by\footnote{Note that $E_{s/u}^*$ are not the usual dual bundles of $E_{s/u}$ that vanish on $E_{u/s}\oplus E_0$. The notation that we use has grown historically in the semiclassical approach to Ruelle resonances and is justified by the fact that the symplectic lift of $\tau$ to $T^*\mc M$ is expanding in the $E_u^*$ fibre and contracting in the $E_s^*$ fibre.}
\begin{equation}\label{dualbundle}
E_u^*(E_u\oplus E_0)=0, \quad E_s^*(E_s\oplus E_0)=0, \quad E_0(E_u\oplus E_s)=0.
\end{equation}
Throughout the paper, we will assume that there exists such a transversely hyperbolic element and denote it $A_0\in\mathfrak{a}$. 
We define the corresponding \emph{positive Weyl chamber} $\mathcal W\subset \mathfrak a$
to be the set of  $A \in \mathfrak a$ which are transversely hyperbolic with the same stable/unstable bundle as $A_0$. The following statement is well known -- a proof can for example be found in \cite[Lemma 2.2]{BGHW20}.
\begin{lemma}
Given an Anosov action and a transversely hyperbolic element $A_0\in \a$, the corresponding positive Weyl chamber $\mathcal W\subset \mathfrak a$ is an open convex cone. It is the maximal open convex cone containing $A_0$ and comprised only of transversely Anosov elements.
\end{lemma}

Since we assume that $E_u$, $E_s$ are not trivial, $\mathcal{W}$ cannot be the whole of $\mathfrak{a}^\ast$. For the record, let us mention that when the set of transversely Anosov elements is dense in $\mathfrak{a}$, the action is called \emph{totally Anosov}. In that case, there is a finite number of Weyl chambers, which are separated by hyperplanes (see \S4 in \cite{SV19}). Note that there are different concrete constructions of Anosov actions and we refer to \cite[Section 2.2]{KaSp94} for examples.

Some very classical notions of $\Z$ or $\R$ dynamics have natural extensions to group dynamics; since the reader may not have already encountered them, we recall the following:
\begin{Def}
Let $\tau$ be an $\R^\kappa$ action on some compact manifold $\mc{M}$. 
\begin{itemize}
	\item One says that $\tau$ is (topologically) transitive if there exists $x\in \mc{M}$ with dense orbit. It is equivalent to require that for any open sets $U,V$, there exists $A\in \R^\kappa$ such that $\tau(A)(U)\cap V\neq \emptyset$. 
	\item If $\mu$ is a $\tau$-invariant Radon probability measure on $\mc{M}$, $\mu$ is said to be ergodic if the only $\tau$-invariant $f\in L^1(\mu)$ are constant.
\end{itemize}
\end{Def}

As usual for a compact hyperbolic dynamical system, the choice of the smooth Riemannian metric $g$ is not intrinsic to the Anosov action: $A_0$ would be an Anosov element of the action $\tau$ for any other choice of a smooth Riemannian metric $g$ with the same hyperbolic splitting. We nevertheless fix the metric $g$ once and for all. This does not only give us  a Riemannian distance function to measure distances and regularity of functions as well as norms $\|\cdot\|$ on $T\M$ and $T^\ast \M$, respectively. It also fixes a volume form $v_g$ --- which we will assume to be normalized with volume $1$, but which will in general not be invariant under the action. More generally $g$ also endows any smooth submanifold of $\mc{M}$ with the corresponding volume form.

Throughout the paper, using our smooth volume, we will identify $C^\infty(\mc{M})$ as a subspace of its dual $\mathcal{D}'(\mc{M})$. We will use the duality bracket defined originally for $u,v\in C^\infty$ by
\[
\langle u, v\rangle = \int_M uv dv_g,
\]
and extended by density whenever it makes sense.

All the important objects will be invariant, or at least equivariant, under changing $g$.

\subsection{Dynamical foliations and absolute continuity}\label{subsec:AnosovFoliations}

Since the point of this article is to study in detail the SRB measure of Anosov actions, we will have to consider disintegration of measures along stable and unstable foliations. It will be crucial that these foliations are \emph{absolutely continuous}. This fact is well established. However, for our purposes, we will need that some conditional densities are $C^\infty$. This seems to be folklore, but we have not found a complete proof written down. We have thus decided to recall the relevant definitions, and explain how the regularity of the conditional measures can be derived from existing results in the literature.

\begin{Def}
Let $F$ be a partition of $\mc M$ and, given $m\in \mc{M}$, let $F(m)$ be the unique element in $F$ containing $m$. Given a neighbourhood $U$ of $m$ denote by $F_{\mathrm{loc}}(m)$ the connected component of $F(m)\cap U$ containing $m$.

The partition $F$ is called a \emph{continuous (resp. H\"older) foliation with n-dimensional $C^k$-leaves} ($k\in \N\cup \{\infty\}$) if for any $m\in \mc M$ there is a neighbourhood $U\subset \M$ and a continuous (resp. H\"older) map $f: U\to C^k(D^n, \mc M)$ such that for any $\tilde m\in U$, $f(\tilde{m})$ is a diffeomorphism of the $n$-dimensional unit disk $D^n$ onto $F_{{\rm loc}}(\tilde m)$. In that case we denote by $TF$ the subbundle of $T\M$ given by $TF(y) = T_y(F(y))$.

A particular case: the foliation is called a \emph{$C^k$ foliation} if for any $m$ there is a neighbourhood $U$ and a $C^k$ chart $\psi: U\to D^n\times D^{\dim \M-n}$ with $F_\mathrm{loc}(\psi^{-1}(0,y)) = \psi^{-1}(D^n\times\{y\})$. Notice that leaves are then $C^k$.
\end{Def}

In the following all foliations will be H\"older with $C^\infty$ leaves, if they are not outright $C^\infty$.\\

It turns out that for $t>0$, $\varphi^{A_0}_t$ is an example of a partially hyperbolic diffeomorphism. More precisely in the terms of Pesin \cite[\S 2.2]{Pes04}, it is \emph{partially hyperbolic in the narrow sense} with respect to the splitting \eqref{eq:invariant-splitting}. We can thus apply some classical results on such dynamics.

By the stable manifold theorem for partially hyperbolic diffeomorphisms (see e.g. \cite[Theorem 4.1]{Pes04} we get for any $m\in \mc M$ a unique $n_s$-dimensional immersed  $C^\infty$-submanifold $W^{s}(m)\subset \mc{M}$ which is tangent to the stable foliation (i.e. $T_x(W^s(m))=E_s(x)$). We call $W^s(m)$ the \emph{stable manifold} of $m\in \mc{M}$ and there exists $\nu>0$ such that
\begin{equation}\label{eq:stable_mfld}
 W^s(m) = \left\{m'\in \M\, \middle|\, \exists\ C>0,\  d_g(\varphi_t^{{A_0}}(m'),\varphi_t^{{A_0}}(m)) \leq  Ce^{-\nu t} \text{ for all }t>0\right\}.
\end{equation}
It is known that the partition of $\mc{M}$ into stable manifolds is a H\"older foliation with $C^\infty$ leaves of the manifold $\mc M$, called the \emph{stable foliation}. Note that by \eqref{eq:stable_mfld} and the commutativity of the Anosov action, we directly deduce that the foliation into stable leaves is invariant under the Anosov action, i.e. for all $a\in \mathbb A$, $\tau(a)(W^s(m))=W^s(\tau(a)(m))$. This also implies that picking a different $A_0\in \W$ would give the same foliation. We can define the \emph{weak stable manifolds}
\begin{equation}\label{eq:weak_stable_subfoliation}
W^{ws}(m)=\bigcup_{a\in \mathbb{A}} W^{s}(\tau(a)(m)) = \bigcup_{y\in W^s(m)}\tau(\mathbb A)y.
\end{equation}
They are immersed submanifolds tangent to the neutral and stable directions, i.e. $T_x(W^{ws}(m)) = E_0(x)\oplus E_s(x)$. By construction the weak stable manifolds provide again a H\"older foliation of $\mc M$ with $C^\infty$-leaves of dimension $n_{ws}=n_s+\kappa$. Precisely the same way one can define the \emph{unstable manifolds} $W^{u}(m)$ and the \emph{weak unstable manifolds} $W^{wu}(m)$ and they provide foliations with the same properties. We can also fix a symbol for the orbit foliation $W^0(m):= \tau(\mathbb{A})m$. Note that despite the fact that all foliations have $C^\infty$-leaves, none of these dynamical foliations is expected to be a $C^\infty$-foliation (or even a $C^1$ foliation) in general --- except $W^0$. See \cite{BFL92} for an example of what is expected to happen when one assumes smoothness of such foliations.

In order to discuss the disintegration of measures along foliations let us introduce product neighbourhoods. We consider given $F$ and $G$ two continuous foliations with smooth leaves and assume they are complementary (i.e $TM= TF\oplus TG$). For $\delta>0$ we denote by $B_F(m, \delta)\subset F(m)$ the ball of radius $\delta$ around $m$ inside the leaf $F(m)$. Then for any $m\in \mc M$ there is a $\delta>0$, a neighbourhood $U$ called \emph{product neighbourhood} (see \cite[Theorem 3.2]{PS70}) such that the following map is a homeomorphism
\begin{equation}\label{eq:product_nbhd}
  P:\Abb{B_F(m,\delta)\times B_G(m,\delta)}{\quad\quad U}{{\quad\quad\quad\quad(x,y)}}{ G_\mathrm{loc}(x)\cap F_\mathrm{loc}(y)}.
\end{equation}
Given such a product neighbourhood $U$, we can introduce the Rokhlin disintegration of measures along $F$ in $U$.
\begin{prop}[Rokhlin's theorem \cite{Rok49}]\label{prop:roklin}
For any Borel probability measure $\mu$ on $U$ there exists a measure $\hat \mu$ on $B_G(m,\delta)$ and a measurable family of probability measures
$\mu_y$ on $F_\mathrm{loc}(y)$, called conditional measures, so that for $f:U\to \C$ in $L^1(\mu)$,
\begin{equation}
 \label{eq:rokhlin}
\int_U f d\mu  = \int_{B_G(m,\delta)} \Big(\int_{F_\mathrm{loc}(y)}  f(x)  d\mu_y(x)\Big)d\hat\mu(y).
\end{equation}
The $\mu_y$ are unique $\mu$-almost surely.
\end{prop}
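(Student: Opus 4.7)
The statement is essentially the classical Rokhlin disintegration theorem transported to our product-of-foliations setup via the homeomorphism $P$, so the plan is to make this transport explicit and to observe that no genuinely dynamical input is needed beyond the existence of the product coordinate chart. First I would set $\nu := (P^{-1})_*\mu$, a Borel probability measure on the standard Borel space $B_F(m,\delta)\times B_G(m,\delta)$, and consider the continuous projection $\pi:(x,y)\mapsto y$ onto $B_G(m,\delta)$. This identification trivialises the partition of $U$ into leaves $F_{\mathrm{loc}}(y)$: under $P$ it corresponds exactly to the partition of the product into vertical fibres.

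Next I would apply the classical Rokhlin theorem \cite{Rok49} to the pair $(\nu,\pi)$. It provides a Borel probability measure $\hat\mu := \pi_*\nu$ on $B_G(m,\delta)$ together with a $\hat\mu$-measurable family of probability measures $\{\nu_y\}_{y\in B_G(m,\delta)}$ on the fibres $B_F(m,\delta)\times\{y\}$ satisfying
\[
\int g\, d\nu \;=\; \int_{B_G(m,\delta)}\Big(\int g(x,y)\, d\nu_y(x)\Big)\, d\hat\mu(y), \qquad g\in L^1(\nu).
\]
I then transfer the conditional measures to the leaves via the leafwise homeomorphism by setting $\mu_y := (P(\cdot,y))_*\nu_y$, a Borel probability measure on $F_{\mathrm{loc}}(y)$. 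Substituting $g = f\circ P$ in the display above and applying the change of variables formula on each fibre yields exactly \eqref{eq:rokhlin}.

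The only point that requires attention is the measurability of the assignment $y\mapsto \mu_y$ in the sense needed for the iterated integral in \eqref{eq:rokhlin} to be meaningful, namely measurability of $y\mapsto \mu_y(B)$ for every Borel set $B\subset U$. This follows from the $\hat\mu$-measurability of $y\mapsto \nu_y$ supplied by the classical theorem, combined with the joint continuity of $P$. Since $U$ sits inside the compact smooth manifold $\mc M$ and $P$ is a homeomorphism, the standard Borel structures on the factors and on $U$ match up as expected, so the hypotheses of the classical disintegration theorem are satisfied without any further regularity assumption on $\mu$. The main ``obstacle'' is therefore not substantive at all: the proof is a bookkeeping transfer of \cite{Rok49} across $P$, and the real content of this subsection lies not in Proposition~\ref{prop:roklin} itself but in the later use of the smoothness of the leaves to upgrade the conditional measures $\mu_y$ to objects with smooth densities.
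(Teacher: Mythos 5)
Your argument is correct, and it is exactly the reduction the paper has in mind: the proposition is stated as a citation of Rokhlin's classical theorem \cite{Rok49} with no proof given, and the paper's own remark that $\hat\mu$ is the pushforward of $\mu$ under the projection $U\cong B_F(m,\delta)\times B_G(m,\delta)\to B_G(m,\delta)$ presupposes precisely your identification of the leaf partition with the vertical fibres via $P$. Your handling of the measurability transfer and the standard Borel structure is the only point needing care, and you address it adequately, so nothing further is required.
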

The $\mu_y$ are called the conditional measures on the leaves $F_\mathrm{loc}(y)$.
Note that by \eqref{eq:rokhlin} one has that $\hat \mu$ is the pushforward of $\mu$ under the projection $U\cong B_F(m,\delta)\times B_G(m,\delta)\to B_G(m,\delta)$. Furthermore by the proof of Rokhlin's theorem (see for example \S5.2 in \cite{VO16}) one gets a description of the conditional measures $\mu_y$. Let us therefore introduce the $F$-tubes
\[
 \mc T_F(y,\varepsilon) := P(B_F(m,\delta)\times B_G(y,\varepsilon)) \subset U.
\]
Then for $\hat \mu$ almost all $y\in B_G(m,\delta)$ the limit
\[
 \lim_{\varepsilon\to 0} \frac{\mathds 1_{\mc T_F(y,\varepsilon)}\mu}{\mu(\mc T_F(y,\varepsilon))}
\]
exists as a weak limit of probability measures on $U$. Obviously the limit is a probability measure supported on $F_\mathrm{loc}(y)$. It coincides with the conditional measures $\mu_y$ (for the points $y$ where the limit may not exist the measures $\mu_y$ can be chosen arbitrarily as they are a $\hat \mu$-zero set).
\begin{Def}\label{def:abscont}
Let $F$ be a continuous foliation on $\mc M$ with $C^1$ leaves. We say that a measure $\mu$ has an absolutely continuous disintegration if $\mu$ can be disintegrated in all product neighbourhoods (with an arbitrarily chosen local smooth transversal foliation $G$ of complementary dimension)
such that all the conditional measures $\mu_{y}$ are absolutely continuous with respect to the Riemannian volume measures on the local leaves $F_\mathrm{loc}(y)$. Being absolutely continuous does not depend on the choice of Lebesgue measures.

We call the foliation itself $F$ is \emph{absolutely continuous} if the Riemannian volume measure $v_g$ on $\mc M$ has an absolutely continuous disintegration
\end{Def}

If $G$ is a foliation with $C^1$ leaves, the Riemannian volume form obtained by restriction of the ambient metric to the leaves will be denoted $L^G$. More concisely, we will write $L^\ast:=L^{W^\ast}$ for $\ast \in \{ u, s, 0, wu, ws\}$.\\

Note that if the foliation is $C^k$ with $k>1$ then, by Fubini's theorem, the foliation is absolutely continuous and the conditional measures have $C^{k-1}$ densities. It is worthwhile to note that if the foliation is not smooth anymore but only the leaves are, then absolute continuity does not hold in general. There are indeed examples of H\"older foliations with smooth leaves that are \emph{not} absolutely continuous (see e.g. \cite[Section 7.4]{Pes04}).
Thankfully, the stable and unstable foliations of Anosov actions are absolutely continuous. This follows from the very general statement \cite[Theorem 7.1]{Pes04}, which concerns (un)stable foliations of partially hyperbolic diffeomorphisms. However, we need a more precise statement.

Given a manifold $N$ with a locally finite atlas of charts, one can measure $C^k$ regularity of functions using the usual $C^k$ norms in restriction to the charts, to provide $C^k(N)$ with a norm $\|\cdot\|_{C^k,{\rm charts}}$. On the other hand, if $N$ is endowed with a Riemannian metric $g$, one can use the Levi-Civita connection $\nabla$ to define covariant norms, $\|\cdot\|_{C^k,\nabla}$. Recall that whenever $g$ and $g^{-1}$ are $C^\infty$ bounded in each chart, with constants that do not depend on the chart, $\|\cdot\|_{C^k,{\rm charts}}$ and $\|\cdot\|_{C^k,\nabla}$ are equivalent. One can find such charts provided the Riemann tensor of $g$ and all its covariant derivatives are bounded. In particular, if $G$ is a continuous foliation with $C^\infty$ leaves in a smooth compact manifold $(N,g)$, the restriction of $g$ to each leaf $G(m)$ must satisfy such estimates, with constants independent of $m$; it thus makes sense to talk of $C^k$ norms of functions on leaves $G(m)$ without specifying exactly how they were computed.
\begin{Def}\label{def:unif-smooth-foliation} Let $G$ be a foliation with smooth leaves, $U$ an open set. Let $f:U\to \C$ a measurable map. We will say, that $f$ is uniformly smooth if for almost all $y\in U$ $f_{|G_{{\rm loc}}(y)}\in C^\infty(G_{{\rm loc}}(y))$ and if for all $k\in \N$
\[
\mathrm{esssup}_{y\in U} \| f_{|G_{{\rm loc}}(y)} \|_{C^k(G_{{\rm loc}}(y))} < \infty.
\]
\end{Def}

\begin{prop}\label{prop:smooth_disintegration_Lebesgue}
Let $X$ be a smooth\footnote{If we assume that $X$ acts on $C^{1+\alpha}$, with $\alpha>0$, we would probably obtain $C^\alpha$ density; it is a classical observation that dynamical foliations of \emph{only} $C^1$ hyperbolic flows need not have absolutely continuous foliations.} Anosov action, and let $W^s,W^u$ be the associated stable and unstable foliations. Then, $W^s$ and $W^u$ are absolutely continuous in the sense of Definition \ref{def:abscont}. Moreover if $U\subset \mc M$ is a product neighbourhood around $m\in U$ of $W^{s/u}$ and an arbitrary smooth transversal complementary foliation $G$, and $v_g$ the Riemannian volume measure on $U$ then there is a continuous function $\delta_{W^{s/u}}:U\to \R^+$ such that
\begin{equation}\label{eq:smooth_decomposition_lebesgue}
 \int_U f dv_g = \int_{G_\mathrm{loc}(m)} \left(\int_{W^{s/u}_\mathrm{loc}(y)} f(z) \delta_{W^{s/u}}(z) dL^{s/u}_y(z)\right) dL^{G}_m(y).
\end{equation}
Furthermore $\delta_{W^{s/u}}$ is uniformly smooth along the leaves $W^{s/u}_\mathrm{loc}(y)$. 
\end{prop}

This smoothness seems to be folklore among dynamical systems specialists, but as the statement is not written down explicitly and is important for our further analysis, we explain how it can be deduced from existing results in the literature. Note that in the statement above, the projected measure $\hat{\mu}$ of Proposition \ref{prop:roklin} has also been identified; it is absolutely continuous.
\begin{proof}
In order to simplify the notation we restrict ourselves to the case of the stable foliation $W^s$. We follow the standard approach to express the density function $\delta_{W^s}$ by holonomies and their Jacobians.

\begin{figure}[h]
\centering
\def\svgwidth{0.4\linewidth}
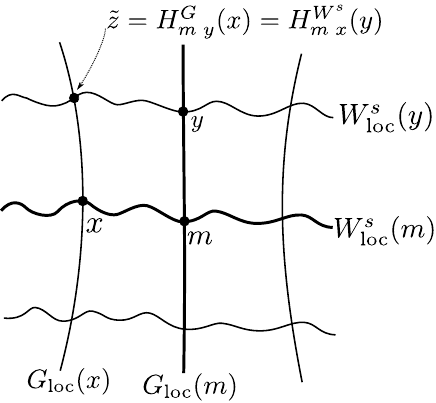
\caption{\label{fig:holonomy-map} In a product neighbourhood of two complementary transversal foliations (e.g. $W^s$ and $G$ in this sketch) one can define the holonomies along the respective foliations. In the proof of Proposition \ref{prop:smooth_disintegration_Lebesgue} a crucial point is that one can first use a Fubini theorem w.r.t. the smooth transversal foliation $G$ and then transform this into a disintegration along $W^s$ by the use of holonomies and their Jacobians.}
\end{figure}

Let us consider around a point $m\in \mc M$ a local $C^\infty$-foliation $G$ that is transversal to $W^s$ and has complementary dimension $n_{wu}$. Let $U$ be a product neighbourhood of these transversal foliations. Now for any $x_1, x_2\in W^s_\mathrm{loc}(m)$ we define the following \emph{holonomy map} (cf. Figure~\ref{fig:holonomy-map}) along the leaves of the stable foliation
\[
 H^{W^s}_{x_1,x_2}: \Abb{G_\mathrm{loc}(x_1)}{G_\mathrm{loc}(x_2)}{ z}{W^s_\mathrm{loc}(z)\cap G_\mathrm{loc}(x_2).}
\]
As the stable foliation is not smooth in general, the holonomy maps are neither. But we have
\begin{prop}[{See e.g. \cite[Theorem 7.1]{Pes04}}]\label{holonomymap}
 The holonomy maps of the stable (and unstable) foliation are absolutely continuous, i.e. there is a measurable function $j^{W^s}_{x_1,x_2}$ on $G_{\mathrm{loc}}(x_1)$ called the Jacobian of the holonomy map such that
 \[
  (H^{W^s}_{x_1, x_2})_*\big( j^ {W^s}_{x_1,x_2}L^{G}_{x_1}\big) = L^{G}_{x_2}
 \]
\end{prop}
In the same manner one can introduce the holonomies along the foliation $G$, $H^G_{y_1,y_2}: W^s_\mathrm{loc}(y_1) \to W^s_\mathrm{loc}(y_2)$ and their Jacobians $j^G_{y_1,y_2}$. As the foliation $G$ is smooth and the leaves $W^s$ are smooth, these holonomy maps are in fact diffeomorphisms and their Jacobians are always defined via the differential.

With the absolute continuity of $H^{W^s}_{x_1, x_2}$ one can prove that $W^s$ is absolutely continuous and give an expression for the conditional densities. First, as $G$ is a smooth foliation we use Fubini's Theorem and write
\[
 \int_U f dv_g = \int_{W^s_\mathrm{loc}(m)}\left(\int_{G_\mathrm{loc}(x)}f(z)\delta_{G}(z) dL^{G}_x(z)\right)dL^s_m(x).
\]
Here, $\delta_{G}\in C^\infty(U)$ is a smooth density. Using the absolute continuity of the homeomorphism $H^{W^s}_{m,x}:G_\mathrm{loc}(m)\to G_\mathrm{loc}(x)$ we can transform the integral over $G_\mathrm{loc}(x)$ into an integral over $G_\mathrm{loc}(m)$
\[
\int_U f dv_g= \int_{W^s_\mathrm{loc}(m)}\left(\int_{G_\mathrm{loc}(m)}f(H^{W^s}_{m,x}(y))\delta_{G}(H^{W^s}_{m,x}(y)) j_{m,x}^{W^s}(y)dL^{G}_m(y)\right)dL^s_m(x).
\]
Next, we use Fubini's theorem to change the order of integration, and change variable $\tilde{z}:= H^{W^s}_{m,x}(y)=H^G_{m,y}(x)$ (cf. Figure~\ref{fig:holonomy-map}), so we can transform the integral over $W^s_\mathrm{loc}(m)$ into an integral over $W^s_\mathrm{loc}(y)$
\begin{equation}\label{eq:disintegration}
 \int_U f dv_g = \int_{G_\mathrm{loc}(m)} \Big( \int_{W^s_\mathrm{loc}(y)} f(\tilde z)\delta_{G}(\tilde z) j^{W^s}_{m, H^G_{y,m}(\tilde z)}(y)j^G_{y,m}(\tilde z)dL^s_y(\tilde z)\Big) dL^G_m(y)
\end{equation}
making appear the Jacobian $j^G_{y,m}$ of the holonomy map $H^G_{y,m}$. This proves the absolute continuity of the stable foliation and shows that the conditional densities on $W^s_\mathrm{loc}(y)$ are given by $\delta_{G}(\tilde z) j^{W^s}_{m,H^G_{y,m}(\tilde z)}(y)j^G_{y,m}(\tilde z)$. As $G$ was a smooth foliation $\delta_G$ and $j^G_{y,m}$ are smooth so it only remains to show that $h(y,\tilde{z}):=j^{W^s}_{m,H^G_{y,m}(\tilde{z})}(y)$ is a smooth function in $\tilde{z}\in W^s_\mathrm{loc}(y)$ depending continuously on $y\in G_\mathrm{loc}(m)$. However by \cite[Remark 7.2]{Pes04} there is an explicit formula for the Jacobian for partially hyperbolic diffeomorphisms. In order to shorten the notation we introduce $\Phi:= \varphi_1^{{A_0}}$ and we can express the Jacobian by \cite[(7.3)]{Pes04} as\footnote{Be aware that Pesin uses the inverted diffeomorphism $\Phi^{-1}$ in his formula but the numerator and denominator in his formula are also interchanged so that the formula agrees with the one that we use here.}
\[
 h(y,\tilde{z})= \prod_{k=0}^\infty\frac{\Big|\mathrm{Jac}\Big((d_{\Phi^k(y)}\Phi)_{\big|T_{\Phi^k(y)}\big(\Phi^k(G_\mathrm{loc}(y))\big)}\Big)\Big|}{{\Big|\mathrm{Jac}\Big((d_{\Phi^k(\tilde{z})}\Phi)_{\big|T_{\Phi^k(\tilde{z})}\big(\Phi^k(G_\mathrm{loc}(\tilde z))\big)}\Big)\Big|}}.
\]
In order to analyze the regularity of this infinite product we use a classical argument --- see for example the proof of \cite[Lemma 5.5]{delallave92}. Consider the expressions $\det((d_{\Phi^k(y)}\Phi)_{|T_{\Phi^k(y)}\Phi^k(G_\mathrm{loc}(m))})$ as functions on the Grassmannians: Let $\pi:\mathcal G\to\mc M$ be the Grassmannian bundle of $n_{wu}$-dimensional subspaces in $T\mc M$, which we endow with some auxiliary metric $\tilde{g}$. The diffeomorphism $\Phi$ has a natural lift $\tilde{\Phi}$ to $\mathcal{G}$. Furthermore we can define $\mathcal J: \mathcal G\ni (x,V) \mapsto |\det((d_x\Phi)_{|V})|\in \R_{>0}$ which is a smooth function.

The foliation $G$ defines a section $S:m \mapsto T_m G(m)\in \mc G$, so that we can write
\begin{equation}\label{eq:logj}
\log(h(y,\tilde{z})) = \sum_{k=0}^\infty \log \mc{J}(\tilde\Phi^ k(S(y))) -\log \mc J(\tilde\Phi^k(S(\tilde{z}))).
\end{equation}
The set  
\[
\Lambda:= \{ (x, E_u(x)\oplus E_0(x))\ |\ x\in\M\}\subset \mathcal{G}
\]
is $\tilde{\Phi}$ invariant. It is a partially hyperbolic set, in the sense that $T\mathcal{G}$ decomposes for $\xi\in \Lambda$ as
\[
T_\xi \mathcal{G} = E^\mathcal{G}_0(\xi) \oplus E^\mathcal{G}_u(\xi) \oplus E^\mathcal{G}_s(\xi).
\]
Here $E^\mathcal{G}_0(\xi)$ is given by the direction of the action and $E^\mathcal{G}_s(\xi)$ is tangent to the local stable manifold
\[
W^s(\xi) = \{ \xi'\in \Lambda \, |\,  x'=\pi(\xi')\in W^s_{{\rm loc}}(\pi(\xi)),\ \xi' \text{ close to } \xi\}.
\] 
Similarly $E^\mathcal{G}_u$ is tangent to the local unstable manifold
\[
W^u(\xi) = \{ \xi' \in\Lambda\, |\, x'=\pi(\xi')\in W^u_{{\rm loc}}(\pi(\xi)),\ \xi'\in \Lambda\}.
\]
The main observation that we will need is that, since $\tilde{\Phi}$ is smooth and contracting on $W^s_{{\rm loc}}$, we have estimates\footnote{This can be proved in local coordinates using Fa\`a di Bruno formula for example; for a more intrinsic way of defining the $C^k$ norm on maps between manifolds, one can consult \cite{Wittmann-2019} for example}
\begin{equation}\label{eq:smooth-contracting-iteration}
\| \tilde{\Phi}^N \|_{C^k(W^s_{{\rm loc}}(\xi))} < C_k e^{- \nu N},
\end{equation}
with constants uniform in $\xi\in\Lambda$. \\

We observe that since $\tilde{z}\in W^s(y)$, we have $S(\tilde{z}),S(y)\in W^s(E_u(y)\oplus E_0(y))$. In particular, this implies
\begin{equation}\label{eq:Grassm_hyperbolicity}
d_\mathcal G\Big(\tilde \Phi^k(S(y)),\tilde\Phi^k(S(\tilde{z})) \Big) \leq Ce^{-\nu k} d_{\mc G}\Big(S(y), S(\tilde{z})\Big).
\end{equation}
Here, the constants $C,\nu$ are global constants on the manifold.

Now the compactness of $\mathcal G$ implies that $\mc J$ is uniformly Lipshitz and thus the series in \eqref{eq:logj} converges absolutely which implies that $h(\cdot,\cdot)$ is a continuous function (in both variables). We now show that it is even differentiable w.r.t. $\tilde{z}$: let us therefore take a smooth curve $\gamma:(-\varepsilon,\varepsilon) \to W^s_{{\rm loc}}(y)$, $\gamma(0)=\tilde{z}$ and $\|\gamma'(t)\|\leq 1$ then
\begin{equation}
 \label{eq:deriv_of_j}
 \frac{d}{dt}_{|t=0} \log h(y,\gamma(t)) = -\sum_{k=0}^\infty d(\log \mc{J})\big[\frac{d}{dt}_{|t=0} \tilde\Phi^k(S(\gamma(t)))\big].
\end{equation}
Using \eqref{eq:smooth-contracting-iteration} we obtain the estimate
\[
 \left\|\frac{d}{dt}_{|t=0} \tilde\Phi^k(S(\gamma(t)))\right\|_{T_{S(x)}\mc G} \leq C'e^{-\nu k}
\]
and this time the uniform boundedness of $d\log \mc J$ ensures the absolute convergence of the right hand side of \eqref{eq:deriv_of_j} which implies that $\frac{d}{dt}_{|t=0} \log h(y,\gamma(t))$ exists and its value depends continuously on $y$.

To estimate derivatives of order $k$ at $\tilde{z}$ it suffices to estimate the derivative at $0$ of $h(y, \gamma(t))$, for maps $\gamma$ defined near $0\in \R^k$, with $\gamma(0) = \tilde{z}$. The computation will be the same as above, using \eqref{eq:smooth-contracting-iteration} again.
\end{proof}

Note that the proof of Proposition~\ref{prop:smooth_disintegration_Lebesgue} strongly depends on the fact that there is an exponential contraction
along the stable, resp. unstable manifold and it would fail when working directly on the weak-(un)stable foliation. Nevertheless thanks to the fact that the neutral foliation is a smooth foliation we can establish the same result as Proposition~\ref{prop:smooth_disintegration_Lebesgue} for $W^{ws}$. Let us start by showing there is a continuous density function $\delta_{W^{ws}}$ for the weak-stable foliations\footnote{Again the case of weak unstable foliation follows exactly the same way but we only focus on the weak stable case to simplify the notation.} and give an explicit expression in terms of  $\delta_{W^{s}}$ and some further quantities which we introduce now: by \eqref{eq:weak_stable_subfoliation} we have
\begin{equation}\label{eq:ws_subfoliation}
W^{ws/wu}_\mathrm{loc}(m) = \bigcup_{y\in W_\mathrm{loc}^{s/u}(m)}W^0_\mathrm{loc}(m)=\bigcup_{x\in W_\mathrm{loc}^{0}(m)}W^{s/u}_\mathrm{loc}(x),
\end{equation}
By the fact that $W^{s/u}(\tau(a)m) = \tau(a)(W^{s/u}(m))$ and the smoothness of the Anosov action both partitions of the leaf $W_\mathrm{loc}^{ws/wu}(m)$ are smooth foliations of $W_\mathrm{loc}^{ws/wu}(m)$ and by Fubini there are strictly positive, smooth  functions $\delta^{W^{ws/wu}_\mathrm{loc}(m)}_{W^0}\in C^\infty(W^{ws/wu}_\mathrm{loc}(m))$ and $\delta^{W^{ws/wu}_\mathrm{loc}(m)}_{W^{s/u}}\in C^\infty(W^{ws/wu}_\mathrm{loc}(m))$ such that
\begin{align}
\int_{W^{ws/wu}_\mathrm{loc}(m)} & f(y)dL^{ws/wu}_m(y)\nonumber \\
								&= \int_{W^{s/u}_\mathrm{loc}(m)} \left(\int_{W^0_\mathrm{loc}(y')} f(z) \delta^{W^{ws/wu}_\mathrm{loc}(m)}_{W^0}(z) dL^0_{y'}(z)\right) dL^{s/u}_m(y') \label{eq:delta_ws_0} \\
							&=\int_{W^{0}_\mathrm{loc}(m)} \left(\int_{W^{s/u}_\mathrm{loc}(y')}f(z) \delta^{W^{ws/wu}_\mathrm{loc}(m)}_{W^{s/u}}(z) dL^{s/u}_{y'}(z)\right) dL^0_m(y'). \label{eq:delta_ws_s}
\end{align}
Now, in the proof of Proposition~\ref{prop:smooth_disintegration_Lebesgue} we chose a transversal complementary smooth foliation $G$ such that $G_\mathrm{loc}(m) = W_\mathrm{loc}^{wu}(m)$.
With \eqref{eq:smooth_decomposition_lebesgue} and \eqref{eq:delta_ws_0} we obtain
\begin{align*}
\int_U f \,dv_g &= \int_{W^{wu}_{\mathrm{loc}}(m)} \left[\int_{W^s_{\mathrm{loc}}(y)} f(z) \delta_{W^s}(z) dL_y^s(z)\right] dL_m^{wu}(y)\\
				&= \int_{W^u_{\mathrm{loc}}(m)} \left(\int_{W^0_{\mathrm{loc}}(x)} \left[\int_{W^s_{\mathrm{loc}}(y)} f(z) \delta_{W^s}(z) dL_y^s(z)\right] \delta_{W^0}^{W^{wu}(m)}(y) dL_x^0(y)\right) dL_m^u(x). 
\end{align*}
We deduce using \eqref{eq:delta_ws_s} that
\begin{equation}\label{eq:smooth_decomposition_lebesgue_weak}
 \int_U f \,dv_g = \int_{W^u_\mathrm{loc}(m)} \left(\int_{W^{ws}_\mathrm{loc}(y)} f(z) \delta_{W^{ws}}(z) dL^{ws}_y(z)\right) dL^u_m(y).
\end{equation}
with
\[
\delta_{W^{ws}}(z) = \frac{\delta_{W^s}(z)\delta^{W^{wu}_\mathrm{loc}(m)}_{W^0}\big(\mathrm{pr}_{W^{ws}_\mathrm{loc}(y)\to W^0_\mathrm{loc}(y)}(z)\big)}{\delta^{W^{ws}_\mathrm{loc}(y)}_{W^s}(z)}
\]
Here $\mathrm{pr}_{W^{ws}_\mathrm{loc}(y)\to W^0_\mathrm{loc}(y)}$ is the projection along the smooth subfoliation \eqref{eq:ws_subfoliation} of $W^{ws}_\mathrm{loc}(y)$.

In order to obtain an analogue to Proposition~\ref{prop:smooth_disintegration_Lebesgue} it remains to analyze the regularity of $\delta_{W^{ws}}$ along the leaves $W^{ws}_\mathrm{loc}(y)$.
Note that for fixed $y$ by the smoothness of the subfoliations \eqref{eq:ws_subfoliation} of  $W^{ws}_\mathrm{loc}(y)$ we conclude that the functions $\delta^{W^{ws}_\mathrm{loc}(y)}_{W^s}(z)$ and $\delta^{W^{wu}_\mathrm{loc}(m)}_{W^0}\big(\mathrm{pr}_{W^{ws}_\mathrm{loc}(y)\to W^0_\mathrm{loc}(y)}(z)\big)$ are smooth on $W^{ws}_\mathrm{loc}(y)$. By the H\"older continuity of the weak stable foliation their $C^k_{W^ {ws}_\mathrm{loc}(y)}$-norms vary continuously on $y\in W^u_\mathrm{loc}(m)$.
However, for $\delta_{W^s}(z)$ we  only know so far that this density is smooth along $W^s_\mathrm{loc}(y)$. Using the smoothness of the Anosov action we can improve this further: for any $z \in W^s_\mathrm{loc}(y)\subset U$ and any $a\in V\subset \mathbb A$ where $V$ is a neighbourhood of the identity such that $\tau(V)z\subset U$ we get the following equivariance property which can be derived from a straightforward calculations using the $\mathbb A$ invariance of the weak-(un)stable foliations as well as several occurrences of the transformation formula:
\[
 \delta_{W^s}(\tau(a)z) = \frac{|\det(d\tau(a))|(z) }{|\det(d\tau(a)_{|E^{wu}})|(y)\cdot|\det(d\tau(a)_{|E^s})(z)|}\delta_{W^s}(z).
\]
All the Jacobians appearing here are understood with respect to the respective Riemannian volume measures.
As the Jacobians depend smoothly on $a$ this shows that $\delta_{W^s}$ has also bounded derivatives of any order into the direction of the $\mathbb A$-orbits.
Summarizing we have shown:
\begin{prop}\label{prop:smooth_disintegration_Lebesgue_weak_stable}
Precisely the same statement as Proposition~\ref{prop:smooth_disintegration_Lebesgue} holds when replacing the (un)stable foliation $W^{s/u}$ by the weak (un)stable foliation $W^{ws/wu}$.
\end{prop}

As a consequence of Proposition~\ref{prop:smooth_disintegration_Lebesgue_weak_stable} we can prove the following crucial result which is a slightly more general version of \cite[Prop 6]{Wei17} for Anosov actions. It connects classical regularity of functions into the directions of a dynamical Hölder foliations with its microlocal regularity i.e. the wavefront set:
\begin{lemma}\label{lemma:regularityunstable}
Let $\tau$ be an $\R^\kappa$-Anosov action, and consider its weak-unstable foliation. Let $f$ be a measurable function on $\mc M$, which is uniformly smooth along the leaves of $W^{wu}$. Then $\WF(f)\subset E^\ast_u$.
\end{lemma}
\begin{proof}
We pick a point $p\in \M$, $\xi \in T^\ast_p \M$, such that $\xi \notin E^\ast_u$ and $S(x)=(x-p).\xi$ defined in local coordinates near $p$. Let $G$ be a transverse foliation to $W^{wu}$ near $p$, and we can assume for example that $G(p)=W^{s}(p)$. Then, using Proposition \ref{prop:smooth_disintegration_Lebesgue_weak_stable}, for each $\chi\in C^\infty(\M)$ supported in a small neighbourhood of $p$
\[
\Big|\int \chi e^{\frac{i}{h}S}f dv_g\Big| \leq \int_{W^{s}(p)} \left| \int_{W^{wu}_{\rm loc}(y)} \chi(x) e^{\frac{i}{h}S(x)}f(x) \delta_{W^{wu}(y)}(x)dL^{wu}_y(x)\right|dL^s_p(y).
\]
 Since each $W^{wu}_{\rm loc}(y)$ is a smooth manifold, and $f$ restricted to this manifold is smooth --- for Lebesgue a.e. $y$ --- we can integrate by parts in the variable $x$. Here, it is crucial that $\delta_{W^{wu}(y)}(x)$ is smooth in $x$. We deduce (since the estimate on $f$ are locally uniform) that this integral is $\mathcal{O}(h^\infty)$, as soon as $dS_{|W^{wu}_{loc}(p)}$ does not vanish.
But the condition that $\xi \notin E^\ast_u$ ensures this close enough to $p$, since $E^\ast_u$ is exactly the set of covectors that vanish on $E^u \oplus \R X$. So for $\chi$ supported close to $p$ one gets the desired result. This implies that $\xi\notin \WF(f)$ by the usual definition of wavefront set \cite[Section 8.1]{Hoe03}.
\end{proof}

\subsection{Invariant measures via spectral theory}\label{sec:spec_theory}

In this section we state the results about the equilibrium measure for general Anosov actions
as they have been obtained in \cite{BGHW20} and we also recall the essential constructions on
which our analysis will be based.

The existence was obtained through the theory of \emph{Ruelle-Taylor resonances}, which are defined as a joint spectrum of the family of vector fields $X_{A}$ for $A\in \a$ acting on certain functional spaces. More precisely, we say that $\la\in \a^*_\C$ is a Ruelle-Taylor resonance for $\tau$ if and only if there exists $u\in \mathcal{D}'(\mc{M})$ non-zero with ${\rm WF}(u)\subset E_u^*$ and
\begin{equation}\label{def:RTresonance}
\forall A\in \a, \quad  -X_{A}u=\lambda(A)u.
\end{equation}
The choice of using $-X_A$ and not  $X_A$ is justified by the fact that the flow $e^{-X_{A}}$ acting in ``backward'' time by pull-back on distributions (for example on a Dirac mass) moves the support forward by $e^{X_A}$ and is then more convenient.

The definition of the spectrum involves the unstable bundle, and depends only on the choice of positive Weyl chamber. In the particular case of algebraic Anosov actions, the different Weyl chambers are permuted by the so-called Weyl group and the Weyl group also acts on the spectra as is explained in  \cite{HWW21}.

Given a general Anosov action $X$, we choose vectors $A_1,\dots,A_\kappa$ in the Weyl chamber $\W$, which form a basis of $\a$. The dual basis in $\a^*$ is denoted $(e_j)_j$, and set $X_j:=X_{A_j}$, and we use $dv_g$ the smooth Riemannian probability measure on $\M$. If we further pick a non-negative function $\psi_j\in C_c^\infty(\R^+)$ satisfying $\int_{0}^\infty  \psi_j(t)dt = 1$ then we can consider the operator
\begin{equation}\label{eq:Eq-R}
R := \prod_{j=1}^\kappa \int e^{-t_j X_j} \psi_j(t_j) dt_j.
\end{equation}
This operator appeared in a parametrix construction in a Taylor complex generated by the Anosov action and this parametrix was the central ingredient for establishing the existence of the Ruelle-Taylor resonances in \cite{BGHW20}. For the purpose of this paper we will not need to introduce the Taylor complexes and spectrum but we will only focus on the results needed for our present work.
In section 4.1 of \cite{BGHW20}, we construct a function $G\in C^\infty(T^*\M)$, called \emph{escape function for any $A\in\W$}  satisfying the properties of \cite[Definition 4.1]{BGHW20}: in particular, there is $R_0>0$, $c_X>0$ and a conic neighborhood $\Gamma_{E_0^*}$ of $E_0^*$ in $T^*\M$ such that
\begin{equation}\label{escapefct}
\left\{\begin{array}{l}
G(x,\xi)=m(x,\xi)\log(1+f(x,\xi)),\\
 f\in C^\infty(T^*\M,\R^+) \textrm{ positive and homogeneous of degree } 1 \textrm{ for }|\xi|>R_0,\\
  m\in C^\infty(T^*\M,[-1/2,8]) \textrm{ homogeneous of degree } 0 \textrm{ for }|\xi|>R_0,\\
  m\leq -1/4 \textrm{ in an arbitrarily small conic neighborhood } \Gamma_u\textrm{ of } E_u^* \textrm{ in }\{|\xi|>R_0\}
  \\
  m\geq 1/2 \textrm{ outside an arbitrarily small conic neighborhood }\Gamma_u'\textrm{ of }
 \Gamma_u \textrm{ in }\{|\xi|>R_0\}.\\
(\bigcup_{t\in [0,1]} e^{tX_A^H}(x,\xi))\cap  \{|\xi|<R_0\}=\emptyset  \Rightarrow   
m(e^{X_{A}^H}(x,\xi))-m(x,\xi)\leq 0 \\
(\bigcup_{t\in [0,1]} e^{tX_A^H}(x,\xi))\cap ( \Gamma_{E_0^*}\cup \{|\xi|<R_0\})=\emptyset \Rightarrow   G(e^{X_{A}^H}(x,\xi))-G(x,\xi)\leq -c_X
\end{array}\right.
\end{equation}
where $X_A^H$ is the Hamilton vector field of the principal symbol $p:=\xi(X_A(x))$ of the operator $-iX_A$ (we note that its flow $e^{tX_A^H}$ is the symplectic lift of $\varphi_t^{A}$).

After fixing a quantization procedure
$\Op$ mapping symbols on $T^*\M$ to operators acting on $C^\infty(\M)$ (as in \cite{Zwo12}), we consider the pseudo-differential operator
$\Op(e^{NG})$ with variable order and we define the Hilbert space
\[
\mc{H}^{NG}:=\Op(e^{NG})^{-1}L^2(\M).
\]
where $\Op(e^{NG})$ can be made invertible by choosing appropriately $G$.
For later, we will also need a semi-classical parameter $h\in (0,1]$, to consider a semiclassical quantization $\Op_h$ and to define
\begin{equation}\label{HNG_h}
\mc{H}^{NG}_h:=(\Op_h(e^{NG}))^{-1}L^2(\M).
\end{equation}
The spaces $\mc{H}^{NG}_h$ for different values of $h$ are the same topological vector spaces but the norms are different.
For more details on the construction of the anisotropic spaces and the used microlocal techniques we refer to \cite[Section 4.1 and Appendix A]{BGHW20}.
\begin{prop}[{\cite[Lemma 4.14, Lemma 5.1 and Lemma 5.2]{BGHW20}}]\label{limiteR^k}
For $N>0$ large enough, the operator $R$ of \eqref{eq:Eq-R} is a bounded operator on $\mc{H}^{NG}$ with essential spectrum  contained in the disk $D(0,1/2)$. The only
eigenvalue $s$ with $|s|\geq1$ is $s=1$ and this eigenvalue has finite multiplicity and no Jordan blocks. Finally, if $\Pi$ denotes the finite rank spectral projector of $R$ at $s=1$, then the following convergence holds in $\mc{L}(\mc{H}^{NG})$
\[
\lim_{k\to +\infty}R^k=\Pi.
\]
Moreover $\Pi$ only depends on the choice of positive Weyl chamber.
\end{prop}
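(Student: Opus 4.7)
The plan is to combine the microlocal Faure--Sj\"ostrand Fredholm framework for a single transfer operator with the averaging structure of $R$ and the Ruelle--Taylor resonance analysis from \cite{BGHW20}. First, I would establish boundedness of $R$ on $\mc{H}^{NG}$: conjugating $e^{-t_jX_j}$ by $\Op(e^{NG})$ gives, modulo lower order, an operator whose principal symbol is $e^{N(G\circ\Phi_{t_j}-G)}$ with $\Phi_{t_j}$ the symplectic lift of $\varphi^{A_j}_{t_j}$. By the escape estimate \eqref{escapefct}, this symbol is uniformly bounded for $t_j\in\supp\psi_j$, so the conjugated operator is uniformly $L^2$-bounded. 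Averaging against $\psi_j$ and taking the finite product of \eqref{eq:Eq-R} then yields boundedness of $R$.

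Second, for the essential spectrum: the escape function is designed so that along the Hamilton flow one picks up a multiplicative factor $e^{-Nc_X}$ each time a trajectory leaves the neutral cone $\Gamma_{E_0^*}$. A microlocal partition of unity separating a small conic neighbourhood of $E_0^*$ from its complement, combined with the fact that $\psi_j$ is supported away from zero with mass one, shows that each factor $\int e^{-t_jX_j}\psi_j(t_j)dt_j$ is quasi-compact on $\mc{H}^{NG}$ with essential spectral radius bounded by $C e^{-Nc_X/4}$. Taking $N$ large enough makes this less than $1/2$, and the product \eqref{eq:Eq-R} inherits the bound.

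Third, I would analyze the peripheral spectrum. Since the commuting vector fields $X_j$ commute with $R$, the finite-dimensional kernel of $R-s$ for $|s|=1$ is stable under each $X_j$. Restricting to this subspace we obtain a joint generalized eigenbasis for the $X_j$; passing to an honest joint eigendistribution $u\neq 0$ with $X_ju=-\la(A_j)u$ for some $\la\in\a^*_\C$, the construction of $\mc{H}^{NG}$ forces $\WF(u)\subset E_u^*$, so $u$ is a Ruelle--Taylor resonant state in the sense of \eqref{def:RTresonance}. Then $Ru=\prod_j\hat\psi_j(\la(A_j))u$, and $|s|=\prod_j|\hat\psi_j(\la(A_j))|=1$ together with $\psi_j\geq 0$ and $\int\psi_j=1$ forces $\la(A_j)=0$ for every $j$ via the equality case in the triangle inequality for Laplace transforms. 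Hence $\la=0$ and $s=1$. Finite multiplicity is then immediate from quasi-compactness, and the absence of Jordan blocks comes from a Perron--Frobenius-type argument: $R$ preserves the cone of non-negative distributions, so iterating on a putative generalized eigenvector would produce unbounded norm growth, contradicting spectral boundedness.

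Finally, the convergence $R^k\to\Pi$ in $\mc{L}(\mc{H}^{NG})$ is then automatic: writing $R=\Pi+S$ with $\Pi S=S\Pi=0$ and $\Spec(S)\subset\{|z|<1\}$, Gelfand's formula yields $\|S^k\|\to 0$. I expect the main obstacle to be the peripheral-spectrum step, specifically the passage from a generalized joint eigenvector of the $X_j$ on $\ker(R-s)$ to an honest joint eigenvector to which the Laplace-transform rigidity applies, together with the verification that $\WF\subset E_u^*$ is preserved along the way. Both points rely on the fine Ruelle--Taylor structure developed in \cite{BGHW20} rather than on general spectral theory, and constitute the truly dynamical content of the proposition.
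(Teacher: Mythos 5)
First, note that the paper itself does not prove this proposition: it is quoted from \cite{BGHW20}, and your outline follows essentially the same microlocal averaging strategy that is used there (and is re-run later in this paper for $R_\psi(\lambda)$, cf.\ the decomposition \eqref{boundsR^0R^1}). Within that strategy, however, there are two concrete gaps. The first is your claim that \emph{each} factor $\int e^{-t_jX_j}\psi_j(t_j)\,dt_j$ is quasi-compact with essential spectral radius $O(e^{-Nc_X})$. For rank $\kappa\geq 2$ this fails: at covectors $\xi$ lying in (a conic neighbourhood of) $E_0^*$ with $\xi(X_{A_j})=0$, the one-dimensional average in $t_j$ produces no smoothing, and the escape function gives no decay either, since $E_0^*$ is invariant under the lifted flow and the estimate in \eqref{escapefct} only applies to trajectories leaving $\Gamma_{E_0^*}$. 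The quasi-compactness must be proved for the full operator $R$ at once, using commutativity to write $R=\int\psi(A)e^{-X_A}dA$ with $\psi(A)=\prod_j\psi_j$ and exploiting that $\sum_j\xi(X_{A_j})^2$ is elliptic off $E_u^*\oplus E_s^*$, so that the joint $\kappa$-dimensional average is smoothing on all of $\Gamma_{E_0^*}$; this is exactly the splitting $R=R^0+R^1+RQ$ of \eqref{boundsR^0R^1}.

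The second gap is in the peripheral spectrum. Quasi-compactness alone does not give $|s|\leq 1$ for eigenvalues, and your triangle-inequality equality case does not close the argument: the only a priori information on Ruelle--Taylor resonances is $\Re\lambda(A)\leq 0$ for $A\in\W$, so individual factors satisfy $|\hat\psi_j(\lambda(A_j))|\leq\hat\psi_j(\Re\lambda(A_j))$ with the right-hand side $\geq 1$, and a resonance with $\Re\lambda(A_j)<0$ compensated by oscillation could in principle produce $|s|=1$ (or $|s|>1$). What is needed, and what \cite{BGHW20} uses, is power-boundedness: $\sup_k\|R^k\|_{\mc{L}(\mc{H}^{NG})}<\infty$, coming from uniform bounds $\|e^{-X_A}\|_{\mc{L}(\mc{H}^{NG})}\leq C$ for $A$ in the relevant subcone (note $R^k=\int\psi^{(k)}(A)e^{-X_A}dA$). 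This yields $|s|\leq 1$, excludes $\Re\lambda(A)<0$ (translate $\psi$ to $\psi_\sigma$ as in the paper: the same resonant state would give $R_{\psi_\sigma}$ an eigenvalue of modulus $e^{-\sigma\cdot\Re\lambda}>1$), reduces to purely imaginary $\lambda$ where your equality-case argument does apply, and rules out Jordan blocks at $|s|=1$ by the standard growth argument. Your Perron--Frobenius substitute (``$R$ preserves the cone of non-negative distributions, so a generalized eigenvector gives unbounded growth, contradicting spectral boundedness'') is not a proof: cone preservation by itself does not forbid Jordan blocks, and spectral radius $1$ is perfectly compatible with $\|R^k\|\to\infty$; the contradiction must be with the uniform bound on $\|R^k\|$, which you never establish. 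Finally, you correctly flag that passing from $\ker(R-s)$ to honest joint eigendistributions of the $X_j$ with $\WF\subset E_u^*$ requires the Ruelle--Taylor machinery of \cite{BGHW20}; with that input, your concluding step $R^k=\Pi+S^k$, $\|S^k\|\to0$, is fine since Fredholm theory leaves only finitely many eigenvalues in $\{|s|>1/2\}$.
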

We note that the same results hold on the spaces $\mc{H}_h^{NG}$ for all $h\in (0,1)$. We would like to emphasize that the anisotropic Sobolev spaces and the escape functions are rather auxiliary objects. Neither the Ruelle-Taylor resonances nor the resonant states depend on their choice. The following technical Lemma (see \cite[Proof of Lemma 4.13]{BGHW20}) indicates how the flexibility in the  choice of the function $G$ can be used:
\begin{lemma}\label{HNG-WF}
Let $G$ satisfy \eqref{escapefct}. A distribution $u\in \mathcal{D}'(\M)$ having $\WF(u)\subset E_u^*$ satisfies $u\in \mc{H}^{NG}$ for some $N>0$. Conversely, if for any $G$ satisfying \eqref{escapefct} there is $N_0$ such that $u\in \mc{H}^{NG}$ for all $N\geq N_0$, then $\WF(u)\subset E_u^*$.
\end{lemma}

If $F\subset T^*\M$ is a conical closed set in $T^*\M$, we denote by $\mathcal{D}'_F(\M):=\{ u\in \mathcal{D}'(\M)\, |\, \WF(u)\subset F\}$.

The spectral projector $\Pi$ satisfies $R\Pi=\Pi=\Pi R$, and by \cite[Lemma 5.3]{BGHW20} its Schwartz kernel is independent of $N,G$ and has the form
\begin{equation}\label{Pisum}
\Pi=\sum_{j=1}^rv_j\otimes \omega_j^*
\end{equation}
with $v_j\in \mc{H}^{NG}\cap \mathcal{D}'_{E_u^*}(\M)$ and $w_j^*\in (\mc{H}^{NG})^*\cap \mathcal{D}'_{E_u^*}(\M)$;
moreover if $N>0$ is large enough we have by \cite[Lemma 5.3]{BGHW20}
\begin{equation}\label{RanPi}
\ran \Pi=\{ u\in \mathcal{D}'_{E_u^*}(\M)\,|\,\forall A\in \a,\,  X_{A}u=0 \}=\{ u\in \mc{H}^{NG}\,|\, \forall A\in \a,\, X_{A}u=0\}.
\end{equation}

The relation of $\Pi$ with the equilibrium measure is explained by \cite[Proposition 5.4]{BGHW20} as follows:
\begin{prop}\label{prop:PhysMeasures}~
\begin{enumerate}
\item For each $v\in (\mathcal{H}^{NG})^\ast$, the map\footnote{an equivalent, shorter notation to express the definition of $\mu_v$ in \eqref{eq:def_muv} is to use the adjoint $\Pi^* $ of the projector $\Pi$ and write $\mu_v = \Pi^*v$.}
\begin{equation}\label{eq:def_muv}
\mu_v : u\in C^\infty(\M)\mapsto \cjg\Pi u,v\cjd
\end{equation}
extends to $C^0(\M)$ as a signed Radon measure. It is $\mathbb{A}$-invariant, in the sense that $\mu_v(X_A u)=0$ for all $A\in \a$ and $u\in C^\infty(\M)$. Also, $\WF(\mu_v)\subset E^\ast_s$. 
\item We have the equality
\[
{\rm span}\{\mu_v \,|\, v\in\mathcal{D}'(\M),\ \WF(v)\cap E^\ast_u = \emptyset\}= \Pi^*(C^\infty(\M)). 
\]
For $N$ sufficiently large, it is a finite dimensional subspace of $(\mc{H}^{NG})^*$ whose dimension equals the space of joint resonant states. Furthermore it is precisely spanned by the $\mathbb{A}$-invariant measures $\mu$ with $\WF(\mu)\subset E_s^*$.
\item \label{it:ac} Let $v_1,v_2\in C^\infty(\mc M, \R^+)$ with $v_1\leq Cv_2$ for some $C>0$. Then $\mu_{v_1}$ is absolutely continuous with bounded density with respect to $\mu_{v_2}$. In particular any $\mu_v$ is absolutely continuous with respect to $\mu_{\bf 1}$.
\item\label{it:average} For each open proper subcone $\mc{C}\subset \W$ in the positive Weyl chamber and each dual element $e_1\in \a^*$ so that $e_1>0$ on $\mc{C}$ a slightly larger open cone, $v$ a finite Radon measure with $\WF(v)\cap E^\ast_u = \emptyset$, and $u\in C^\infty(\M)$,
\begin{equation}\label{Birkhoff_sum}
\mu_v(u)= \lim_{T\to \infty} \frac{1}{|\mc{C}_T|} \int_{A\in \mc{C}_T,}\cjg \varphi^A_{-1}u,v\cjd dA
\end{equation}
where $dA$ is the Lebesgue-Haar measure on $\a$ and $\mc{C}_T:=\{A\in \mc{C}\,|\, e_1(A) \leq T \}$. In particular, $\mu_{\bf 1}$ is the equilibrium measure.
\end{enumerate}
\end{prop}

\begin{rem}
Item (4) only holds if we start with a measure and not a general distribution, because we have to replace integration over a whole cone by integration against a smooth cutoff, and control the error.
\end{rem}

\begin{proof}
According to \eqref{Pisum}, we have
\[
\mu_v(u) = \sum_{j=1}^r \langle v, v_j\rangle \langle u, \omega_j^\ast \rangle. 
\]
To establish (1) it suffices thus to prove that $u\mapsto \langle u, \omega_j^\ast\rangle$ is a Radon measure. This and item (2), (3) are contained in \cite[Proposition 5.4]{BGHW20}.

Now, (4) was only proven for $v\in C^\infty(\M)$ in \cite{BGHW20}. According to Proposition \ref{limiteR^k}, we obtain that for $\psi$ chosen suitably, 
\begin{equation}\label{limiteRkPi}
\langle R^k u, v\rangle \to  \mu_v(u).
\end{equation}
We now have to relate the quantity in the LHS to cone averages, which we do following the same strategy as in \cite[Proposition 5.4]{BGHW20}.

There, we replaced the functions $\prod_{j=1}^\kappa \psi_j(t_j)$ by $\psi_\sigma(t)=\prod_{j=1}^\kappa \psi_j(t_j-\sigma_j)$ for $\sigma\in \R^\kappa\simeq \a$ small in the definition of $R^k$, and call $R_\sigma^k$ the resulting operator. Fixing one direction $A_1\in \mc{C}$, $e_1\in \a^*$ so that $e_1(A_1)=1$, and taking a
transverse hypersurface $\Sigma=e_1^{-1}(\{1\})$ to $\mc{C}$, we obtain coordinates $(t_1,\dots ,t_\kappa)$ with $t_1=e_1(A)$ on $\a$ and $\bar{t}=(t_2,\dots,t_\kappa)$ some linear coordinates on $\Sigma$ associated with a basis $A_2,\dots A_{\kappa}$ of $\ker e_1$.
We proved in \cite[Lemma 5.5]{BGHW20} that if $\omega\in C_c^\infty((0,1))$ satisfies $\int \omega=1$ and $q\in C_c^\infty(\Sigma\cap \mc{C})$ satisfies $\int q=1$, then for each $v'\in C^\infty(\M)$, one has 
\[
\begin{gathered}
 \Big|\frac{1}{N}\sum_{k=1}^N\int_{\R^{\kappa-1}}\omega(\tfrac{k}{N})\cjg R_{\sigma(\bar{t})}^ku,v'\cjd q(\bar{t})d\bar{t} -\frac{1}{N^\kappa}\int_{0}^N\int_{\R^{\kappa-1}}\cjg e^{-\sum_{j=1}^\kappa t_j X_{A_j}}u,v'\cjd (\tfrac{t_1}{N})^{1-\kappa}\omega(\tfrac{t_1}{N}) q(\tfrac{\bar{t}}{t_1})d\bar{t}dt_1\Big|\\
\leq \eps(N) \sup_{t_1,\bar{t}}|\cjg e^{-\sum_{j=1}^\kappa t_j X_{A_j}}u,v'\cjd|\leq \eps(N)\|u\|_{C^0(\M)}\|v'\|_{(C^0(\M))^*}
\end{gathered}
\]
where $\sigma(\bar{t})=(1,\bar{t})$ in the coordinates $t_1,\dots,t_\kappa$ and $\eps(N)\to 0$ as $N\to \infty$. By density, we can extend the inequality above to Radon measures, so it applies to $v$. We obtain, using \eqref{limiteRkPi} and letting $N\to \infty$,
\[
\cjg \Pi u, v\cjd=\lim_{N\to \infty}\frac{1}{N^\kappa}\int_{0}^N\int_{\R^{\kappa-1}}\cjg e^{-\sum_{j=1}^nt_j X_{A_j}}u,v\cjd (\tfrac{t_1}{N})^{1-\kappa}\omega(\tfrac{t_1}{N}) q(\tfrac{\bar{t}}{t_1})d\bar{t}dt_1.
\]
By finally letting
$\omega(t_1)q(\bar{t}/t_1)$ be arbitrarly close to $\kappa t_1^{\kappa-1}\textbf{1}_{[0,1]}(t_1)\textbf{1}_{\Sigma\cap \mc{C}}(\bar{t}/t_1)/|\Sigma\cap\mathcal{C}|$ in $L^1$, we obtain the desired limit.
\end{proof}
\section{Characterization of the  invariant measures}\label{sec:basin}
In this section we will characterise the invariant measures $\mu_v$ obtained by the Ruelle-Taylor spectral theory
in terms of their fine dynamical properties: we will first study the disintegration of $\mu_v$ with respect to the stable foliation. This will show that the measures $\mu_v$ are precisely the Sinai-Ruelle-Bowen measures (see Definition~\ref{def:SRB}). Next we will study the ergodic decompositions of the measures $\mu_v$ as well as their basins of attraction and relate them to the physical measure (cf Definition~\ref{def:physical_measure}).
\subsection{SRB measures}\label{sec:SRB}
Let us now discuss absolute continuity of measures along $W^s$:
\begin{Def}\label{def:SRB} Let $\mu$ be a Radon measure. If $\mu$ is $\mathbb{A}$-invariant and has absolutely continuous conditionals with respect to $W^s$, we say that $\mu$ is \emph{SRB}.\end{Def}

Just as in the rank $1$ case, we can prove the following:
\begin{theorem}\label{thm:absolutecontinuity}
The set of SRB measures is exactly the set of positive Radon measures $\mu_v$ in $\ran\Pi^\ast$. Additionally, the Lebesgue density of the $W^s$ conditionals of $\mu_v$ is uniformly smooth along the $W^s$ leaves and strictly positive $\mu_v$-almost everywhere. 
\end{theorem}
\begin{proof} Let us assume the following:
\begin{lemma}\label{lemma:absolutecontinuitymu1}
The measure $\mu_{\bf 1}$ is SRB.\footnote{A priori, a different choice of measure $v_g$ yields another $\mu_\mathbf{1}'$, but according to item 3) of Proposition \ref{prop:PhysMeasures}, we have $\mu_\mathbf{1} << \mu_\mathbf{1}' << \mu_\mathbf{1}$.}
\end{lemma}
It is not hard to see that if $\mu \leq C \nu$, then $\nu$-a.e- $\mu_y \leq C \nu_y$. In particular, we deduce that actually every positive Radon measure in $\ran \Pi^\ast$ has absolutely continuous disintegration which shows that the positive Radon measures in $\ran\Pi^\ast$ are SRB measures.  In \cite{Ledrappier-Young85}, we can find the following result (which follows from Theorem A and Corollaries 6.14 and 6.2 therein):
\begin{lemma}\label{lemma:Ledrappier-Young}
If $\mu$ is invariant by $\varphi^A_1$ for some $A\in\W$, and has absolutely continuous disintegration with respect to $W^s$, then for $\mu$ a.e. $y$, its density $\rho_y$ is non vanishing on $W^s_{{\rm loc}}(y)$ and satisfies
\[
\frac{\rho_y(x)}{\rho_y(z)} = \frac{\prod_{j=1}^\infty |\det(d_{\varphi_j^{A}(x)}\varphi^{A}_{-1}|_{E_s})|}{\prod_{j=1}^\infty |\det(d_{\varphi_j^{A}(z)}\varphi^{A}_{-1}|_{E_s})|}.
\]
\end{lemma}
The formula on the right of the equation defines a smooth function, as one can prove following the same strategy as the one employed in the proof of Proposition \ref{prop:smooth_disintegration_Lebesgue}. This proves the announced statement regarding the densities of the $\mu_v$'s. Finally, consider $\mu$ an SRB measure. According to Lemma \ref{lemma:Ledrappier-Young}, its densities must be uniformly smooth. Using the same arguments as in the proof of Lemma \ref{lemma:regularityunstable}, we deduce that $\WF(\mu)\subset E^\ast_s$, and thus $\mu\in \ran\Pi^\ast$.\end{proof} 

It remains to prove Lemma \ref{lemma:absolutecontinuitymu1}. In general, if $\mu_n \rightharpoonup \mu$, and the $\mu_n$ have conditionals $\mu^n_y$, it is not guaranteed that $\mu^n_y \rightharpoonup \mu_y$ a.e. $y$. This is behind the subtlety of this proof.

\begin{proof}[Proof of Lemma \ref{lemma:absolutecontinuitymu1}] We follow the proof of \cite[Theorem 6.3.1]{LSYoung95}. Call $\nu_0:=v_g$ and $\nu_A=(\varphi^{A}_{-1})_*\nu_0$ for $A\in \W$. We also define $\nu^T:=\frac{1}{|\mc{C}_T|}\int_{\mc{C}_T}\nu_A dA$ and $\nu^\infty:=\mu_{{\bf 1}}$ so that $\nu_T \rightharpoonup \nu^\infty$. 

Let $x_0\in \M$ and consider a small neighborhood $U_{x_0}$  and fix a smooth foliation $G$ transversal to $W^s_{\rm loc}$ such that the disintegration along stable leaves can be done by Rokhlin's theorem (Proposition \ref{prop:roklin}). We also assume that $\nu^\infty(\partial U_{x_0})= 0$. For $\nu^\infty$ a.e. $y\in U_{x_0}$ we denote $\nu^\infty_y$ the conditional on $W^s_{\rm loc}(y)$. Let $F_\eps(x)= \cup_{z\in B_G(x,\eps)}W^{s}_{\rm loc}(z)$: this is a tube of radius $\eps>0$ around $W^{s}_{\rm loc}(x_0)$ (here $B_G(x,\eps)\subset G(x)$ is the ball of radius $\eps$ in $G(x)$). Then, for $\nu^\infty$ a.e $x$,
\[
\nu^\infty_{x}=\lim_{\eps\to 0}\frac{\textbf{1}_{F_\eps(x)}\nu^\infty}{\nu^\infty(F_\eps(x))}.
\]
Fixing $x\in U_{x_0}$ such that the convergence holds, we can take the limit excluding those (potentially countably many) $\eps$'s for which $\nu^\infty(\partial F_\eps) \neq 0$. This ensures that $\nu^T(F_\epsilon)\to \nu^\infty(F_\epsilon)$ according to the Portmanteau theorem.

Certainly, the measure $\nu^T$ has a simple expression, since
\[
j_T(z):=\frac{d\nu^T}{d\nu_0}(z) = \frac{1}{|\mc{C}_T|} \int_{\mc{C}_T} \det(d\varphi^A_1(z)) dA. 
\]
Now, we observe that for $z\in W^s_{\rm loc}(y)$, with constants independent of $A\in\mathcal{C}$, 
\begin{align*}
\left|\log\frac{\det(d\varphi^A_1(z))}{\det(d\varphi^A_1(y))}\right| &= \left|\int_0^1 \big[\mathrm{div}(X_A)(e^{tX_A}(z)) -  \mathrm{div}(X_A)(e^{tX_A}(y))\big] dt \right| \\
				&\leq C \int_0^1 d_g( e^{tX_A}(z),  e^{tX_A}(y)) dt \leq C d_g(z,y). 
\end{align*}
This implies
\[
\frac{j_T(z)}{j_T(y)} \leq e^{C d_g(z,y)}. 
\]
Now let $V\subset W^{s}_{\rm loc}(x)$ be a small open ball and consider $\mc{V}_\eps:=\cup_{y\in V}G(y)\cap F_\eps(x)$. Taking a slightly large or smaller $V$, and again restricting the $\eps$'s, we can ensure that $\nu^T(\mc{V}_\eps)\to \nu^\infty(\mc{V}_\eps)$. We use the smooth disintegration of Lebesgue measure (Proposition \ref{prop:smooth_disintegration_Lebesgue}) to write
\begin{align*}
\nu^T(\mc{V}_\epsilon) &= \int_{B_G(x,\eps)} \int_{H^G_{x,y}(V)} j_T(z)\delta_{W^s} dL^s_y(z) dL^G_x(y)\\
						&\leq C \int_{B_G(x,\eps)} \int_{H^G_{x,y}(V)} j_T(y)\delta_{W^s} dL^s_y(z) dL^G_x(y)\\
						&\leq C \int_{B_G(x,\eps)} j_T(y)dL^G_x(y) \int_{V}\delta_{W^s} dL^s_x(z).
\end{align*}
In the last line, we used that $\delta_{W^s}$ are bounded from above and away from zero (because they are continuous an non vanishing, see Proposition \ref{prop:smooth_disintegration_Lebesgue}). On the other hand,
\begin{align*}
\nu^T(F_\epsilon) &= \int_{B_G(x,\eps)} \int_{W^s_{\rm loc}(y)} j_T(z)\delta_{W^s} dL^s_y(z) dL^G_x(y)\\
			&\geq \frac{1}{C} \int_{B_G(x,\eps)} \int_{W^s_{\rm loc}(y)} j_T(y)\delta_{W^s} dL^s_y(z) dL^G_x(y)\\
			&\geq \frac{1}{C} \int_{B_G(x,\eps)} j_T(y)dL^G_x(y) \int_{W^s_{\rm loc}(x)}\delta_{W^s} dL^s_x(z).
\end{align*}
We deduce that for some constant $C>0$ independent of $V$ and $T$, 
\[
\frac{\nu^T(\mc{V}_\epsilon)}{\nu^T(F_\epsilon)} \leq C|V|,
\]
where $|.|$ is the \emph{normalized} Lebesgue measure on $W^s_{\rm loc}(x)$. Letting $T\to+\infty$ and then $\eps\to 0$ (with the conditions mentioned above on $\eps$) we get the same bound for $\nu^\infty$, which proves that $\nu^\infty=\mu_{\bf 1}$ has absolutely continuous density. 
\end{proof}

\subsection{Physical measures}\label{sec:physical}

Next we want to study the ergodic decomposition and the basins of the invariant measures $\mu_v$ that we obtained by the Ruelle-Taylor spectral theory.

Let us first introduce some notation:
\begin{Def}\label{def:physical_measure}
For an invariant measure $\mu$ define the \emph{basin} (of attraction) to be those points $x\in \mc M$ such that for any $f\in C^0(\mc M)$ and any proper open subcone $\mc C \subset \mc W$ we have
\begin{equation}\label{eq:Cone-averaging-ergodic-SRB-measure}
\mu(f) =\lim_{T\to \infty} \frac{1}{|\mc{C}_T|} \int_{A\in \mc{C}_T}f(\varphi^A_{-1}(x)) dA.
\end{equation}
We say that an invariant measure $\mu$ is a \emph{physical measure} for $\tau$ if the basin has positive Lebesgue measure.
\end{Def}

We show that the $\mu_v$ are linear combinations of physical measures and give various other different characterisations:
\begin{theorem}\label{thm:physical}
Let $\tau$ be a smooth, locally free, $\R^\kappa$ Anosov action with generating map $X$, then we have:
\begin{enumerate}[1)]
\item The linear span over $\C$ of the physical measures is $\ran \Pi^\ast$.
\item The equilibrium measure \eqref{eq:Cone-averaging-physical-measure} is a positive linear combination of physical measures.
\item The union of the basins of the physical measures has full Lebesgue measure in $\M$.
\item An ergodic Radon probability  measure $\mu$ is a physical measure if and only if it is invariant by $\varphi_t^{A}$ for all $A\in \a$ and has wavefront set
$\WF(\mu)\subset E_s^*$.
\end{enumerate}
\end{theorem}

Observe that the physical measures are really intrinsic to the action and do not depend on the choice of auxiliary metric. The equilibrium measure is some combination of these physical measure determined by a choice of volume measure, it does not have an intrinsic dynamical meaning.

We furthermore prove that if the Anosov action is transitive then the equilibrium measure is the unique SRB and physical measure (see Corollary~\ref{cor:unique}) and if it is positively transitive the equilibrium measure has full support (see Proposition~\ref{prop:full_support}). Theorem~\ref{thm:physical} together with these two additional result then gives Theorem~\ref{thm:SRB_intro} stated in the introduction. Indeed the implication $(1)\Longrightarrow (2)$ from Theorem \ref{thm:SRB_intro} is exactly $3)$ of Theorem \ref{thm:physical}, the implication $(2)\Longrightarrow (1)$ of Theorem \ref{thm:SRB_intro}  follows from dominated convergence, the equivalence $(4)\iff (2)$ in Theorem \ref{thm:SRB_intro} follows from $4)$ of Theorem  \ref{thm:physical}, the equivalence $(2)\iff (3)$ in Theorem \ref{thm:SRB_intro} is a consequence of $1)$ of Theorem \ref{thm:physical} (with the uniqueness of physical measure by transitivity in Proposition~\ref{prop:full_support}),  and of Theorem \ref{thm:absolutecontinuity}.

We will first study the ergodic decomposition of $\mu_{\bf 1}$ and identify the basins of attractions. We will use some arguments in the spirit of \cite[Proposition 2.3.2]{BKL02} to obtain the:
\begin{lemma}\label{lem:basinsFj}
Let $X$ be an Anosov action, and let $\mu=\mu_{\bf 1}$ be the equilibrium measure. There exist disjoint measurable sets $F_1,\ldots,F_r$ such that $\mu(F_i)=v_g(F_i)\neq 0$ for all $i$ and $\mu(\cup_iF_i)=v_g(\cup_iF_i)=1$. Furthermore the ergodic components of $\mu$ are given by $\mu^i:=\frac{\mathbf{1}_{F_i}}{\mu(F_i)}\cdot \mu$ and $F_i$ is the basin of $\mu^i$. In particular each $\mu^i$ is a physical measure. Finally the $\mu^i$'s form a basis of the space $\Pi^*(C^\infty(\M))$.
\end{lemma}
This Lemma actually implies Theorem \ref{thm:physical}. Indeed, it gives directly item 2) and 3) and also implies $\ran \Pi^\ast \subset \mathrm{span(physical)}$. Next we observe that the basins of different physical measures must be pairwise disjoint. Since the union of the $F_j$'s has full Lebesgue measure, there cannot be an additional physical measure. This proves item 1). Finally item 4) follows from 1). Indeed, let $\mu$ be an ergodic measure. If it satisfies the wavefront set condition, it must be in $\ran \Pi^\ast$, so it is absolutely continuous with respect to $\mu_{\bf 1}$. Since it is ergodic, it must be an ergodic component of $\mu_{\bf 1}$ and Lemma \ref{lem:basinsFj} applies. On the other hand, if it is physical, item 1) applies.
\begin{proof}
For $\mc{C}\subset \W$ a proper open subcone, $e_1\in \a^*$ with $e_1>0$ on a slightly larger cone $\mc{C}$,
and $f\in C^0(\M)$, we define
\[
\Omega(f,\mc{C}):=\left\{ x\in \M\ \middle|\ f_-(x):=\lim_{T\to +\infty}\frac{1}{|\mc{C}_T|}\int_{A\in \mc{C}_T} f(\varphi^A_{-1}(x))dA \text{ exists} \ \right\},
\]
where $\mc{C}_T=\mc{C}\cap \{e_1(A)\in (0,T)\}$.
It follows from the ergodic Birkhoff Theorem for actions (see \cite[Theorem 3]{Bewley}) that for all such $\mc{C}$ and $f$, and every invariant Borel measure $\nu$, provided $\partial\mc{C}$ has zero Lebesgue measure, $\Omega(f,\mc{C})$ has full $\nu$-measure. We observe that if $\mc{C}_{1}, \mc{C}_{2}$ are disjoint open subcones, $\Omega(f,\mc{C}_1)\cap \Omega(f, \mc{C}_2) \subset \Omega(f, \mc{C}_1\cup \mc{C}_2)$. By dichotomy, we can find a sequence $(\mc{C}_j)_{j\geq 0}$ of proper open subcones whose boundary has zero Lebesgue measure satisfying the following. For every proper open subcone $\mc{C}$, there exists $I\subset \N$ such that $\cup_I \mc{C}_j\subset \mc{C}$ is a disjoint union, and $\mc{C}\setminus(\cup_I \mc{C}_j)$ has Lebesgue measure zero. Then we obtain that $\cap_I \Omega(f,\mc{C}_j) \subset \Omega(f,\mc{C})$. In particular, this gives
\[
\bigcap_{\mc{C}} \Omega(f,\mc{C}) = \bigcap_j \Omega(f,\mc{C}_j),
\]
has full $\nu$-measure. Using the fact that $C^0(\M)$ is separable, we can further improve this, by saying that
\[
\Omega := \bigcap_{f,\mc{C}} \Omega(f,\mc{C}),
\]
also has full $\nu$ measure for every invariant Borel measure $\nu$. Additionally, we observe that if $x\in\Omega$, then the weak unstable manifold satisfies $W^{wu}(x)\subset \Omega$.

More generally, if a measurable set $F$ is a union of full weak unstable manifolds, we will say that $F$ is \emph{unstable-invariant}. Notice that $\Omega$ is unstable-invariant. According to Lemma \ref{lemma:regularityunstable}, this implies  $\WF(\mathbf{1}_{F}) \subset E^\ast_u $, and thus $\mathbf{1}_{F} \in \mathcal{H}^{NG}$ for $N$ large enough. In particular, since $X_A \textbf{1}_{F} = 0$ for all $A\in\a$, $\textbf{1}_{F}$ belongs to the finite dimensional space $\ran \Pi$ by \eqref{RanPi}. This means that $\textbf{1}_{F}=\Pi\textbf{1}_{F}$ in the distribution sense, thus \emph{Lebesgue} almost everywhere since $\textbf{1}_{F}$ is $L^1$.

Since for $v\in C^\infty$, the identity $\mu_v(u)=\langle\Pi u,v\rangle$ extends from smooth functions $u$ to elements $u\in \mc{H}^{NG}$, we have for each unstable-invariant set $F$,
\begin{equation}\label{eq:mu_lebesgue}
\mu_v(F)= \int_{\M}(\Pi\textbf{1}_{F})v \, dv_g= \int_{F}v \, dv_g.
\end{equation}
For $v={\bf 1}$, this gives $\mu(F) = v_g(F)$. In particular, since $\mu(\Omega)=1$, $\Omega$ has full Lebesgue measure (even if the Lebesgue measure is not invariant by the action). Since each such $\mathbf{1}_F$ is an element of the space of  resonant states at $\lambda=0$
\[
H:=\mathrm{Span}\left\{ \mathbf{1}_F\ \middle|\ F\text{ is unstable invariant } \right\}/\sim \text{ is a subspace of } \ran \Pi.
\]
(here, $\sim$ is the equivalence relation of being equal Lebesgue a.e. or, equivalently $\mu$ a.e.). Accordingly, we can find pairwise disjoint unstable invariant sets $\tilde{F}_1,\dots, \tilde{F}_r$ with $r\leq \rank \Pi$, so that the $[\mathbf{1}_{\tilde{F}_j}]$ form a basis of $H$. Since $\Omega$ has full-measure, we can assume that $\cup \tilde{F}_j = \Omega$.

If $f$ is a continuous function and $\mc{C}\subset \W$ is an open proper subcone, we observe that the sets $\{ x\in \Omega\ |\ f_-(x) \leq r\}$ are unstable-invariant for each $r\in \R$, so they are finite unions of $\tilde{F}_j$'s, up to Lebesgue-null sets and by \eqref{eq:mu_lebesgue} also up to $\mu$-null sets. This implies that $f_-(x)$ is constant on each $\tilde{F}_j$, $\mu$-a.e. and we denote $f_{-,j}$ that value. By Lebesgue theorem and the invariance of $\mu$ by $\varphi^{A}_{-1}$,
\begin{equation}\label{eq:muFj}
\mu(\tilde{F}_j)f_{-,j}=\int_{\tilde{F}_j}f_-(x)d\mu(x)= \lim_{T\to \infty}\int_{\tilde{F}_j}\frac{1}{|\mc{C}_T|}\int_{\mc{C}_T} f(\varphi^A_{-1}(x))dA\, d\mu(x)=\int_{\tilde{F}_j}f\,d\mu.
\end{equation}
Thus, if we define $\mu^j := \textbf{1}_{\tilde{F}_j} \mu/\mu(\tilde{F}_j)$ we get $f_{-,j}=\mu^j(f)$. Thus for arbitrary $f\in C^0(\M)$ and an arbitrary proper subcone $\mc C\subset \W$ we have shown that for $\mu$ a.e. $x\in \tilde{F}_j$
we have $f_-(x)=\mu^j(f)$. Using as above that $C^0(\M)$ is separable and that we can approximate an arbitrary cone by a countable number of cones, we deduce that the basin $F_j$ of $\mu^j$ differs from $\tilde{F}_j$ by a $\mu$ nullset or equivalently a Lebesgue nullset. We have thus seen that the $\mathbf 1_{F_j}\sim \mathbf{1}_{\tilde{F}_j}$ form a basis of $H$ with dynamical significance. 

Now we prove that in fact $H=\ran\Pi$. The same argument as in \eqref{eq:muFj} can be done for $\mu_v$, so we deduce that for $v\in C^\infty$,
\begin{equation}\label{eq:ergodic-decomposition-mu}
\int f d\mu_v = \sum_j f_-(F_j) \mu_v(F_j) = \sum_j \mu^j(f) \int_{F_j} v dv_g.
\end{equation}
Let $\pi:L^1(\M,\mu)\to L^1(\M,\mu)$ be the projector onto the set of $X_A$ invariant functions (for all $A\in \a$) along the closed subspace generated by coboundaries $\{\varphi^{A}_1f - f\ |\ f\in L^1(\mu),\ A\in \a\}$.
By the ergodic theorem  of \cite{Bewley}, $\pi$ is a continuous operator and for $\mu$ almost all $x\in \M$
\[ \pi f(x)=\lim_{T\to \infty}\frac{1}{|\mc{C}_T|}\int_{\mc{C}_T} f(\varphi^A_{-1}(x))dA.\]
We have just proved that $\pi$ maps $C^0(\M)$ to functions constant on the $F_j$'s. In particular, by density of continuous functions in $L^1(\mu)$, we deduce that the image of $\pi$ only contains functions constant on the $F_j$'s. This proves that the $F_j$, or more precisely, the $\mu^j$ are the ergodic components of $\mu$, and that Equation \eqref{eq:ergodic-decomposition-mu} is the ergodic decomposition (in the sense of Theorem 4.2.6 of \cite{HaKaHandbook1}).
One consequence of the above is that $\mu$ has at most $\rank \Pi$ ergodic components. However, according to Proposition \ref{prop:PhysMeasures} in $\{\mu_v\, |\, v\in C^\infty(\M)\}$, we can find $\rank \Pi$ linearly independent probability measures, absolutely continuous with respect to $\mu$, and invariant under the action. This implies that the number of ergodic components is at least $\rank \Pi$. We deduce that $H = \ran \Pi$, and that the $\mathbf{1}_{F_j}$ form a basis of $\ran \Pi$.

It remains to show that the $\mu^j$'s form a basis of $\Pi^*(C^\infty)$. Since they have pairwise disjoint basin, the
$\mu^j$ are linearly independent and they span a space of the same dimension as $\Pi^*(C^\infty)$. It thus remains to prove that all $\mu^j$ lie in $\ran\Pi^*$. We can refine \eqref{eq:ergodic-decomposition-mu}, since the LHS of \eqref{eq:ergodic-decomposition-mu} is equal to $\langle \Pi f, v\rangle$. Since the $\mathbf{1}_{F_j}$ form a basis of $\ran\Pi$, we can find some $\omega_j^\ast \in \ran \Pi^\ast$ so that
\begin{equation}\label{Pi0explicit}
\Pi = \sum_{j=1}^r \mathbf{1}_{F_j} \otimes \omega_j^*,
\end{equation}
Now, we can write $\mu_v$ in two different ways:
\[
\mu_v(u) = \sum_j w_j^\ast(u) \int_{F_j} v dv_g =  \sum_j \mu^j(u) \int_{F_j} v dv_g,
\]
so that $w_j^\ast = \mu^j$. 
\end{proof}
Observe that we had originally no information on the wavefront set of $\mu^j$, because it is defined as the product of $\mu_{\bf 1}$ with $\WF(\mu_{\bf 1})\subset E^\ast_s$ and $\mathbf{1}_{F_j}$ with $\WF(\mathbf{1}_{F_j})\subset E^\ast_u$. In order to prove the uniqueness of physical measures for transitive actions, we will prove the following statement, reminiscent of Smale's spectral decomposition for Axiom A flows:
\begin{lemma}\label{lem:open_basin}
Let $\mu^j$ be a physical measure. The support of $\mu^j$ is a union of full weak stable leaves. The smallest $\mathbb{A}$-invariant open neighbourhood $U_j$ of the support of $\mu^j$ is given by 
\begin{equation}\label{eq:def-U_j}
U_j :=\bigcup_{x\in\, \supp(\mu^j)} W^{u}(x).
\end{equation}
The basin $F_j$ of $\mu^j$ is contained and has full Lebesgue measure in $U_j$. Finally, $\supp(\mu^j)$ admits an arbitrarily small open neighbourhood that is $-\W$ stable. Finally, we obtain the decomposition
\[
\Pi= \sum_j \mathbf{1}_{U_j} \otimes \mu^j.
\]
\end{lemma}
We see that $\supp(\mu^j)$ is an ``attractor'' for $-\W$. Let us discuss some consequences before going to the proof.
\begin{cor}\label{cor:unique}
It the Anosov action $\tau$ is transitive then there is a unique physical measure associated with $\W$.
\end{cor}
\begin{proof}
Assume that there are two physical measures $\mu^1$ and $\mu^2$ and denote by $U_1, U_2$ the open sets provided by Lemma \ref{lem:open_basin}. By transitivity there is $A\in \a$ such that $U_1\cap \varphi^A_1(U_2)\neq \emptyset$. Then we deduce that Lebesgue a.e.
 $x\in U_1\cap\varphi^A_1(U_2)$ lies in $F_1$ and (by absolute continuity of $\varphi^A_1$ w.r.t. Lebesgue) Lebesgue a.e.
 $x\in U_1\cap\varphi^A_1(U_2)$ lies in $\varphi_1^A(F_2)$. But as the basins are flow invariant this is not possible except if $F_1=F_2$.
\end{proof}

\begin{Def}\label{def:pos_trans}
 We call an Anosov action \emph{positively transitive} with respect to $\W$ if there is a proper subcone
 $\mc C\subset \mc W$ such that for any two open sets $U,V\in\mc M$ there is
 $A\in \mc C$ such that $\varphi_1^A(U)\cap V\neq \emptyset$.
\end{Def}
If the Anosov action is positively transitive then it is obviously transitive and
we know that there is a unique SRB measure.
\begin{prop}\label{prop:full_support}
 If the Anosov action is positively transitive then the SRB measure $\mu$ has full support.
\end{prop}
\begin{proof}
If the action is positively transitive with respect to $\W$, it must also be with respect to $-\W$. In particular, if we take any open neighbourhood of $\supp(\mu)$, if it is $-\W$-invariant, it must be dense. According to Lemma \ref{lem:open_basin}, $\supp(\mu)$ has an arbitrarily small open neighbourhood, so $\supp(\mu)$ itself must be dense; since it is closed, $\supp(\mu)=\M$.
\end{proof}

\begin{proof}[Proof of Lemma~\ref{lem:open_basin}]
According to Theorem \ref{thm:physical}, the $\mu^j$'s are SRB, so we can apply Theorem \ref{thm:absolutecontinuity}. From this we deduce two facts. First, we observe that the support of $\mu^j$ must be stable-invariant. Indeed, consider $x_0$ in the support of $\mu^j$, a local transverse foliation $G$, with $G_{\rm loc}(x_0) = W^{wu}_{\rm loc}(x_0)$ and a product neighbourhood $V$. Consider $y\in W^s_{\rm loc}(x_0)$ and assume that $y\notin\mathrm{supp}(\mu^j)$. Then, we can find $y\in V' \subset V$, of the form $V' = \cup_{z\in B_{W^s(x_0)}(y,\eps)} B_G(z,\eps)$, so that $\mu^j(V')=0$. Then, since the densities of the conditionnals of $\mu^j$ with respect to $W^s$ are uniformly bounded above \emph{and} below, we find that
\begin{align*}
0=\int_{V'} d\mu^j &= \int_{W^{wu}_{\rm loc}(x_0)} \left(\int_{W^s_{\rm loc}(x)\cap V'} d\mu^j_x(z)\right) d\hat\mu^j(x)\\
				&\geq \frac{1}{C}\int_{B_{W^{wu}}(x_0,\eps/2)} \left(\int_{W^s_{\rm loc}(x)} d\mu^j_x(z)\right) d\hat\mu^j(x) \\
				&\geq \frac{1}{C}\mu^j(\cup_{x\in B_{W^{wu}}(x_0,\eps/2)} W^s_{\rm loc}(x)) \geq 0.
\end{align*}
This is in contradiction with the fact that $x_0$ is in the support of $\mu^j$. We have thus shown that $\supp \mu^j$ is stable invariant. As it is trivially invariant in the flow direction we deduce that the support of $\mu^j$ is a union of weak stable leaves as claimed in the Lemma.

The second consequence of $\mu^j$ being SRB is the fact that the product neighbourhoods $V$ of a point $x_0\in \supp \mu^j$ from above fullfill $v_g(F_j\cap V) = v_g(V)$ which can be seen as follows: we start from the observation $\mathbf{1}_{F_j} \mu^j = \mu^j$, so that with the notations above, for $\hat\mu$ a.e. $y$, $\mathbf{1}_{F_j\cap W^{s}_{\rm loc}(y)}\mu^j_y= \mu^ j_y$. This means that for $\hat\mu^j$ a.e. $y$, $F_j(y):=W^s_{\rm loc}(y)\cap F_j$ has full Lebesgue measure in $W^s_{\rm loc}(y)$. As the $F_j$ are invariant in the weak-unstable directions we can use the holonomy along $W^{wu}_{\rm loc}$ from one $W^s_{\rm loc}$ leaf to another, which is absolutely continuous (Proposition \ref{holonomymap}), and deduce that \emph{every} $F_j(y)$ has full Lebesgue measure. Using Proposition \ref{prop:smooth_disintegration_Lebesgue} we deduce that $F_j\cap V$ has full Lebesgue measure in $V$.

By definition, $U_j$ is $\mathbb{A}$-invariant, and $W^u$-saturated. Let $U_j^0$ be a sufficiently small neighbourhood of the support of $\mu^j$ that can be covered by product neighbourhoods as above. Then $F\cap U_j^0$ has full Lebesgue measure in $U_j^0$. We make two observations concerning $U_j^0$: thanks to the local product structure, $U_j^0 \subset U_j$ and additionally, for any point $x$ in $U_j$, there exists $A\in\W$ such that $\varphi^A_{-1}(x)$ is in $U_j^0$. Both observations together with the $\mathbb A$-invariance of $U_j$ imply that
\begin{equation}\label{eq:attracting-neighbourhood}
U_j = \bigcup_{A\in\W} \varphi^A_1(U_j^0), 
\end{equation}
so that $U_j$ is open. We observe that for $x\in F_j$ we must also have $\varphi^A_{-1}(x)\in U_j^0$ for some $A\in\W$.
Thus by \eqref{eq:attracting-neighbourhood}, we obtain $F_j\subset U_j$.
Since $F_j$ is invariant, and since $F_j\cap U_0^j$ has full Lebesgue measure in $U_j^0$ it must also have full measure in $U_j$, according to
\eqref{eq:attracting-neighbourhood}.
\end{proof}

We close this section with a last result on the support of the physical measures.
\begin{lemma}\label{lem:ac_measures_have_basin2}
Let $\nu$ be an $\mathbb{A}$-invariant SRB Radon probability measure on $\M$. Then $\supp(\nu)\subset \M$ is connected if and only if $\nu$ is ergodic. Then $\nu$ is physical.
\end{lemma}
\begin{proof}
According to our results so far, $\nu$ being SRB must thus be a linear combination of physical measures $\mu^j$. According to Lemma \ref{lem:open_basin}, $\supp(\mu^j)\subset U_j$ and $U_j\cap U_k =\emptyset$ if $j\neq k$, so that $\supp(\nu)$ can only be connected if $\nu$ is physical. On the other hand, we know that physical measures are ergodic. It now suffices to prove that the support of each physical measure is connected. For this we observe that if $C$ is a connected component of $\supp(\mu^j)$, then $\mathbf{1}_C$ is $\mathbb{A}$-invariant, and non zero in $L^1(\mu^j)$, so that it must be of full $\mu^j$ measure. However, each connected component of $\supp(\mu^j)$ must have positive $\mu^j$ measure, so we are done.
\end{proof}

\section{A Bowen type formula for the SRB measure and Guillemin trace formula}\label{sec:bowen}

In this section we show that the SRB measure $\mu$ can be expressed in terms of the periodic orbits of the flow. We obtain thus a generalized Bowen formula. Before we can state our result, we have to recall some basic facts regarding the structure of periodic orbits of Anosov actions. We recall the classical Lemma
\begin{lemma}\label{lem:torus}
Let $\tau$ be an Anosov action with positive Weyl chamber $\W$. Let $x\in \M$, and $A\in \W$, such that $\varphi^{A}_1(x)= x$. Then there exists a lattice $L\subset \mathbb{A}$, such that for all $A' \in L$, $\varphi^{A'}_1(x) = x$, and $T: = \tau(\mathbb{A})x\simeq \mathbb{A}/L$ is an embedded torus in $\M$. We denote $L=L(T)$.
\end{lemma}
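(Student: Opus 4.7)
The plan is to first define the stabilizer $L := \{A' \in \mathbb{A} : \varphi^{A'}_1(x) = x\}$, which is a closed subgroup of $\mathbb{A} \cong \R^\kappa$. Local freeness of $\tau$ immediately gives that $L$ is discrete, so by the classification of closed subgroups of $\R^\kappa$ we have $L \cong \Z^r$ for some $0 \leq r \leq \kappa$. Note that $A \in L$ by hypothesis, so $r \geq 1$. The orbit map $\Phi: \mathbb{A} \to \mc M$, $\Phi(A') = \tau(A')(x)$ factors through an injective immersion $\bar\Phi: \mathbb{A}/L \to \mc M$ of a connected $\kappa$-manifold; local freeness ensures it is an immersion, and injectivity is automatic by definition of $L$. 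The entire content of the lemma thus reduces to showing that $r = \kappa$, equivalently that $T := \Phi(\mathbb{A})$ is compact.

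The main step, and the one that genuinely uses the transverse hyperbolicity of $A \in \W$, is to prove compactness of $T$. I would argue by contradiction: if $T$ were not compact, properness of $\bar\Phi$ would fail, so there would be a sequence $B_n \in \mathbb{A}$ with $d(B_n, L) \to \infty$ and $\tau(B_n)(x) \to y$ for some $y \in \overline T$. By commutativity of $\tau$ and the identity $\tau(A)(x) = x$, every point $\tau(B_n)(x)$ is a fixed point of $\varphi^A_1$, and $y$ is therefore also a fixed point of $\varphi^A_1$. Now comes the key input of hyperbolicity: at the fixed point $x$, the splitting $T_x\mc M = E_0(x) \oplus E_u(x) \oplus E_s(x)$ and the stable manifold theorem for hyperbolic fixed points of the diffeomorphism $\varphi^A_1$ imply that in a small neighborhood $U$ of $x$, the set of fixed points of $\varphi^A_1$ coincides with a piece of the central manifold, which, by local freeness, is exactly $T \cap U = \tau(V)(x)$ for a small neighborhood $V$ of $0$ in $\mathbb{A}$. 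After passing to a convergent subsequence (replacing $y$ by $x$ by pulling back along the action, or by translating the limit point into $T$), one concludes that for $n$ large, $\tau(B_n)(x) = \tau(B'_n)(x)$ with $B'_n$ bounded. Then $B_n - B'_n \in L$, contradicting $d(B_n, L) \to \infty$.

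Once compactness of $T$ is established, the quotient $\mathbb{A}/L$ is compact, forcing $L$ to be a lattice of full rank $\kappa$, so $\mathbb{A}/L$ is indeed a $\kappa$-torus. Finally, to upgrade the immersion $\bar\Phi$ to an embedding, I would invoke the classical fact that any injective immersion from a compact manifold into a Hausdorff manifold is automatically an embedding: $\bar\Phi$ is a continuous bijection onto its image between compact Hausdorff spaces, hence a homeomorphism, and combined with its being an immersion this gives a smooth embedding. Thus $T \simeq \mathbb{A}/L$ is an embedded torus, as required.

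The main obstacle is the compactness step, where one must use hyperbolicity to rule out accumulation of the orbit. The cleanest formulation seems to be via the local uniqueness of fixed points of $\varphi^A_1$ near $x$ (hyperbolic fixed points are isolated transversally to the center manifold), which is a direct consequence of the stable manifold theorem for partially hyperbolic diffeomorphisms already cited in the paper; everything else is formal use of closed subgroup structure in $\R^\kappa$ and standard manifold arguments.
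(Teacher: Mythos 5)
Your argument is correct and rests on the same key input as the paper's proof: for a transversely hyperbolic $A$, the fixed-point set of $\varphi^A_1$ near any of its fixed points is contained in the local $\mathbb{A}$-orbit of that point, which the paper uses to show the orbit is closed and you use to show the stabilizer $L$ is cocompact --- equivalent packagings of the same idea. The only phrase to tighten is ``replacing $y$ by $x$ by pulling back along the action / translating the limit point into $T$'': a priori $y\in\overline{T}$ only, so instead apply the local fixed-point statement at $y$ itself (it is a fixed point, being a limit of fixed points of $\varphi^A_1$); this places $\tau(B_n)(x)$ in the local orbit of $y$ for large $n$, hence shows $y\in T$ and gives the contradiction with $d(B_n,L)\to\infty$.
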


\begin{proof}
Note that the set $L=\{A'\in \a\, |\, \varphi_1^{A'}(x)=x\}\subset \a\cong \R^n$ is a discrete abelian subgroup and the Anosov action provides an injective immersion of $\R^\kappa/L$ on the orbit through $x$. Let us denote this orbit by $Y:=\{\varphi_1^{A'}(x)\,|\, A'\in \a\}$. It is enough to show that $Y$ is a closed submanifold (then it has to be compact because $\mc{M}$ is compact and hence the lattice $L$ necessarily has full rank). Let $y=\varphi_1^{A'}(x)\in Y$, then one has $\varphi_1^A(y)=\varphi_1^{A'}(\varphi_{1}^A(x))=y$, i.e. $Y$ is comprised only of fixed points of $\varphi^A_1$, and thus so is $\overline{Y}$ by continuity. However, since $A$ is transversely hyperbolic, we deduce by the implicit function theorem that for each fixed point $y$ of $\varphi^A_1$ there is an open set $U\owns y$ such that for $y' \in U$, if $\varphi^A_1(y')=y'$, then $y'$ is in the local orbit of $y$ under the action. This proves that $\overline{Y}=Y$.
\end{proof}

We stress that there may be periodic orbits $\{\varphi_t^{A_0}(x) \,  |\,  t\in [0,1]\}$ of the flow in direction $A_0\in \a$ which are not contained in an invariant torus. However this can only happen if the orbit is periodic with respect to a direction $A_0$ which is \emph{not} transversely hyperbolic and thus in no positive Weyl chamber. In any case, we denote by $\mathcal{T}$ the set of invariant tori which are precisely the compact orbits of the Anosov $\mathbb A$ action. According to the closing lemma (see \cite[Theorem 2.4]{Katok-Spatzier96}), the lattice points of the periodic tori are locally discrete in the sense that for each compact set $K\subset \W$,
\begin{equation}\label{finitenessperiodictori}
 K \bigcap \Big(\bigcup_{T\in \mc{T}} L(T) \Big)\text{ is finite.}
\end{equation}
Recall from the introduction that after fixing a Lebesgue measure on $\mathbb A$ we can pushforward the measure on $\mathbb A/L(T)$ to $T$ and obtain a natural measure $\lambda_T$ on $T$. It thus makes sense to integrate over periodic  orbits.

Given $T\in \mathcal{T}$, $x\in T$, and $A\in L(T)\cap \W$, the map $\varphi^A_1$ is hyperbolic transversal to $T$ by definition of being an Anosov action. In particular, if we set $\mathcal{P}_A(x) := d_x(\varphi^A_{-1})_{|E_u(x) \oplus E_s(x)}$, we find that
\[
|\det (1-\mathcal{P}_A(x))| \neq 0
\]
and that it does not depend on $x\in T$. We denote then by $\mathcal{P}_A$ some choice of $\mathcal{P}_A(x)$.

Let us recall that in order to prove Theorem~\ref{thm:formuledebowenintro} we need to fix $A_1\in \W$ in an open proper subcone $\mc{C}\subset\W$ and a one form $\eta\in \a^*$ positive in a small neighbourhood of $\mc C$ and we defined for $0\leq a < b$, $\mc{C}_{a,b}:=\{A\in \mc{C}\,|\, \eta(A)\in [a,b]\}$ and denoted by $|\mc C_{a,b}|$ its volume. We then want to show for $f\in C^0(\mc M)$
\[
\mu(f)=\lim_{N\to \infty}\frac{1}{|\mc{C}_{aN,bN}|} \sum_{T\in \mc{T}}
\sum_{A\in \mc{C}_{aN,bN}\cap L(T)}\frac{\int_{T}f\,d\lambda_T}{|\det(1-\mc{P}_{A})|}.
\]

In the rank 1 situation, one way to prove such a formula is to consider the flat trace of the resolvent $(X-s)^{-1}$, relating it with the periodic orbits on the one hand via the Guillemin trace formula, and with the SRB measure on the other hand, using the spectral theory of $X$ on some anisotropic space.

In our case, the proof will be heuristically similar. However, it will be complicated by the fact that we do not have a resolvent at our disposal in the multiflow situation. Thankfully, we can work around this. In the paper \cite{BGHW20}, we introduced some averaged propagators (see \eqref{eq:Eq-R} for the case $\lambda=0$)
\[
R(\lambda) := \prod_{j=1}^\kappa \int_{\R^\kappa} e^{- t_j(X_j+\lambda_j)} \psi_j(t_j) dt.
\]
where $X_j=X_{A_j}$ for some basis $(A_j)_j\in \mc{W}$ of $\a$.
By definition, $R(\lambda)$ commutes with the action. We proved that given $N>0$, for $G$ well chosen, $R(\lambda)$ is quasi-compact on $\mathcal{H}^{NG}$ for all $\lambda$'s with $\Re \lambda_j > - N$, $j=1,\dots,\kappa$. Then we proved that given $\lambda_0$, if $\lambda$ is close enough to $\lambda_0$, $\lambda$ is a Ruelle-Taylor resonance of $X$ if and only if $\lambda$ is in the joint spectrum of the family $(X_1,\dots,X_\kappa)$ acting on $\ker (R(\lambda_0)- 1)$ (see \cite[Proposition 4.17]{BGHW20}). For this reason, the study of the Ruelle-Taylor resonances (and in particular $0$, the leading resonance) can be done using the averaged propagators $R(\lambda)$. More generally, we will take functions $\psi \in C^\infty_c(\W)$, with $\int \psi = 1$, and consider
\[
R_\psi(\lambda) := \int_{\W} e^{-X_A - \lambda(A)} \psi(A) dA.
\]
What will replace the propagator in our arguments will thus be the so-called \emph{shifted resolvent} $T^\lambda_{\psi, f}(s)$, defined for $f\in C^\infty(\mc M)$,
\[
T^\lambda_{\psi, f}(s) := f R_\psi(\lambda) (R_\psi(\lambda) - s)^{-1}.
\]
We will show that $T^\lambda_{\psi,f}(s)$ admits a flat trace, and express this flat trace in terms of the orbits. Practically, one would rather consider the resolvent $(R_\psi(\lambda) - s)^{-1}$ of $R_\psi(\lambda)$ than the shifted resolvent, but this operator would not satisfy the right wavefront set condition to define the flat trace.

\subsection{Guillemin trace formula}

To start with, we need to extend the Guillemin trace formula (see eg. \cite[p313]{GS77})
to the case of Anosov actions. We write $n:=\dim\M$ and $\kappa$ the rank of the action $\tau:\mathbb{A}\to {\rm Diffeo}(\M)$.
We will follow the proof in rank $1$ by Dyatlov-Zworski \cite{DZ16a}.
Recall that the flat trace is a regularized trace for certain operators that are not trace class. The conormal to the diagonal $\Delta$ of $\M\times \M$ is given by
\[
N^*\Delta = \{ (x,x,\xi,-\xi)\ |\ x\in \M,\ \xi\in T^\ast_x \M\} \subset T^*(\M\times \M).
\]
If $P:C^\infty(\M)\to \mathcal{D}'(\M)$ is a continuous linear operator, one can consider its Schwartz kernel $\mc{K}_P\in \mathcal{D}'(\M\times \M)$, and assuming that $\WF(\mc{K}_P)\cap N^*\Delta=\emptyset$ we can set
\[
\Tr^\flat (P):= \cjg \iota_{\Delta}^* \mc{K}_P,1\cjd_{C^{-\infty},C^\infty}
\]
where $\iota_\Delta: x\in \M\mapsto (x,x)\in \Delta\subset \M\times\M$ is the inclusion. Here the pull-back is well-defined thanks to the wavefront condition, see \cite[Theorem 8.2.4]{Hoe03}.

\begin{prop}[Guillemin Trace formula]\label{prop:Guillemin}
Let $\tau : \mathbb{A} \to {\rm Diffeo}(\M)$ be an Anosov action with Weyl chamber $\W$. Then the map
\[
f \in C^\infty(\M), \psi\in C^\infty_c(\W) \mapsto \Tr^\flat\left( f \int_\W e^{-X_A}\psi(A)dA \right),
\]
is well defined, and extends as a Radon measure on $\M\times\W$. 
\end{prop}

\begin{proof}[Proof of Proposition~\ref{prop:Guillemin}]

The proof is divided into three steps. The first step consists in checking the wavefront set condition necessary to define the flat trace. Next, we need to make a local explicit computation to obtain the formula. Finally, we need to obtain some estimates to extend the formula to non-compactly supported functions.

For the first two parts of the proof, we can assume to be working with $f(x)\psi(A)$ with $\psi\in C_c^\infty(\mc{W})$, using the density of product functions in functions on $\M\times \W$. We introduce the notation
\[
R_\psi:= R_\psi(0)= \int_{\W} \psi(A)e^{-X_A} dA.
\]
\textbf{First step:} Let us show that $R_\psi$ has a well defined flat trace, which means that its Schwartz kernel $\mc{K}_{R_\psi}$ satisfies
\begin{equation}\label{conditionN^*Delta}
\WF(\mc{K}_{R_\psi})\cap N^*\Delta=\emptyset
\end{equation}
if $\Delta\subset \M\times \M$ is the diagonal. First, we consider $e^{-X}$ as an operator 
$C^\infty(\M)\otimes C_c^\infty(\W)\to C^\infty(\M)$ by $f\otimes\psi\mapsto R_\psi f$ and we consider its Schwartz kernel
$\mc{K}_{e^{-X}}\in  \mc{D}'(\W\times \M\times \M)$.
Using the formula for the wavefront set of a pushforward, we obtain
\[
\WF (\mc{K}_{R_\psi})\subset \{ (x,\eta,x',\eta')\in T^*(\M\times\M)\,|\,\exists A\in \supp(\psi), (A,0,x,\eta,x',\eta')\in \WF (\mc{K}_{e^{-X}})\}.
\]
Since $\mc{K}_{e^{-X}}(A,x,x')=\delta_{x=\varphi^A_1(x')}$, one has, by \cite[Theorem 8.2.4]{Hoe03},
\begin{equation}
\begin{split}
\WF(\mc{K}_{e^{-X}})\subset \big\{(A,-\eta(X_\bullet(\varphi^ A_1(x'))),  \varphi_1^{A}(x'),\eta,x', & -d\varphi_1^{A}(x')^T\eta  ) \in T^*(\W\times\M\times \M)  \\
& \,\big| \, A\in \W,x'\in \M, \eta\in T^*_{\varphi_1^{A}(x')}\M\setminus \{0\}\big\}
\end{split}.
\end{equation}
Thus a point of $\WF(\mc{K}_{R_\psi})$ belongs to $N^*\Delta$ if and only if there exists $x'\in\M$ and $A\in \supp \psi\subset \W$ such that $\varphi_1^{A}(x')=x'$, $\eta(X_{A'}(\varphi_1^A(x')))=0$ for all $A'\in \a$ and $\eta=d\varphi_1^{A}(x')^T\eta\not=0$. Note that $\eta(X_{A'}(\varphi_1^A(x')))=0$ implies that $\eta\in E_u^*\oplus E_s^*$, where $d\varphi_1^A(x')^T$ has no eigenvalues of modulus $1$ by normal hyperbolicity.  This shows that \eqref{conditionN^*Delta} holds.

For the \textbf{second step} we start with the following Lemma:
\begin{lemma}\label{lemma:Guillemin-local}
Let $x_0\in \M$ and $A_0\in \W$ such that $\varphi^{A_0}_1(x_0)=x_0$. There is a neighborhood $U$ of $x_0$ and $\eps>0$ such that if $B(A_0,\eps):=\{A\in \a\,| \, |A-A_0|<\eps\}$ one has $\varphi^{A}_1(x_0)\in U$ for all $|A|<\eps$ and for each $h\in C_c^\infty(U\times B(A_0,\eps))$,
\[
\Tr^\flat\Big(\int h(x,A) e^{-X_A}\, dA\Big)=\frac{1}{|\det(1-\mc{P}_{A_0}(x_0))|}\int_{|A|<\eps} h(\varphi^{A}_1(x_0),A_0)dA.
\]
\end{lemma}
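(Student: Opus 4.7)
The plan is an explicit local calculation in coordinates adapted to the periodic torus through $x_0$. First I would observe that $T:=\tau(\mathbb{A})x_0$ is a $\kappa$-dimensional embedded torus and that $A_0\in L(T)$ because $\varphi^{A_0}_1(x_0)=x_0$. By the closing lemma yielding \eqref{finitenessperiodictori}, $L(T)$ is locally discrete, so I can shrink $\eps$ so that $B(A_0,\eps)\cap L(T)=\{A_0\}$. I would then build a tubular chart $\Phi\colon V\times W'\to U$ around a piece of $T$ through $x_0$, with $V\subset\a$ a neighborhood of $0$ containing $\{|B|<\eps\}$ and $W'\subset(E_u\oplus E_s)_{x_0}$, such that $\Phi(B,0)=\varphi^B_1(x_0)$ parametrizes $T\cap U$ (identifying Haar $dB$ on $\a$ with the orbit measure on $T$) and $v\mapsto\Phi(B,v)$ follows the transverse exponential map.

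Using the commutativity of $\tau$ together with $A_0\in L(T)$, the flow reads in this chart
\[
\varphi^A_{-1}\bigl(\Phi(B,v)\bigr)=\Phi\bigl(B+A_0-A,\,\Psi(A,B,v)\bigr),
\]
where $\Psi$ is smooth, $\Psi(A,B,0)\equiv 0$ (since $\tau$ preserves $T$), and $\partial_v\Psi(A_0,B,0)=\mc{P}_{A_0}(\Phi(B,0))$. The constancy of $\mc{P}_{A_0}$ along $T$ identifies this last derivative with $\mc{P}_{A_0}(x_0)$. The fixed point equation $\varphi^A_{-1}(x)=x$ then amounts to $A=A_0$ and $v=\Psi(A_0,B,v)$; invertibility of $I-\mc{P}_{A_0}(x_0)$ forces $v=0$ by the implicit function theorem, so the local fixed set is exactly $\{A_0\}\times(T\cap U)$, parametrized by $B$.

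In the last step I would compute the flat trace by pulling back the diagonal delta. In the chart, the computation at the diagonal yields
\[
\Tr^\flat\Big(\int h(A,x)e^{-X_A}\,dA\Big)=\iiint h\bigl(A,\Phi(B,v)\bigr)\,\delta\bigl(F(A,B,v)\bigr)\,dA\,dB\,dv,
\]
where $F(A,B,v):=(A-A_0,\,v-\Psi(A,B,v))$; the density factors from $dv_g$ cancel because $\rho(x)/\rho(\varphi^A_{-1}(x))=1$ on the support of $\delta$. For fixed $B$, the Jacobian of $F$ in $(A,v)$ at $(A_0,0)$ is block-diagonal $\mathrm{diag}(I_\a,\,I-\mc{P}_{A_0}(x_0))$, since the off-diagonal entry $\partial_A\Psi(A,B,0)$ vanishes (as $\Psi(A,B,0)\equiv 0$ for all $A$). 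H\"ormander's formula for the pullback of $\delta_0$ under this submersion therefore produces
\[
\iint h(A,\Phi(B,v))\,\delta(F)\,dA\,dv=\frac{h(A_0,\Phi(B,0))}{|\det(I-\mc{P}_{A_0}(x_0))|},
\]
and integrating in $B$ over $\{|B|<\eps\}$ with $\Phi(B,0)=\varphi^B_1(x_0)$ gives the stated identity.

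The main obstacle is bookkeeping of measures: one must verify that the coordinate Lebesgue $dA\,dB\,dv$ used in the delta calculation produces exactly Haar $dA$ on $\a$ in the final answer, with no stray Jacobian from $dv_g$ or from the transverse slice. The key is the cancellation $\rho(x)/\rho(\varphi^A_{-1}(x))=1$ on the fixed set, leaving $dB$ as coordinate Lebesgue on $\a$, which by construction coincides with Haar. A related small subtlety is the legitimacy of the $A_0$-shift in the $B$-coordinate required to keep $B+A_0-A$ within the chart; this is harmless because $\Phi(B+A_0,0)=\varphi^{B+A_0}_1(x_0)=\varphi^B_1(x_0)=\Phi(B,0)$ as points on $\M$, i.e.\ $A_0\in L(T)$ acts trivially on $T$.
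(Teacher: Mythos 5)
Your proposal follows the same route as the paper's proof: coordinates adapted to the orbit/transverse splitting near the periodic torus, the Schwartz kernel of $e^{-X_A}$ written as a delta distribution, pullback to the diagonal, and a Jacobian computation producing $|\det(1-\mc{P}_{A_0}(x_0))|$ with the leftover $B$-integral giving Haar measure along the torus. The one claim you cannot actually arrange is the asserted normal form $\varphi^A_{-1}(\Phi(B,v))=\Phi(B+A_0-A,\Psi(A,B,v))$ with the orbit coordinate displaced by exactly $A_0-A$ independently of $v$: a smooth transversal is in general not invariant under the return map, so the orbit component of the image carries a $v$-dependent drift. In the paper's chart (where the generators are straightened, $d\phi(x)X_{A_i}=\partial_{y_i}$) this drift appears explicitly as the term $F(y'')$, the map being $(y',y'')\mapsto(y'-A+A_0+F(y''),G(y''))$. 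This inaccuracy is harmless for two reasons, both of which you should make explicit: first, the drift only enters the off-diagonal block of a block-triangular Jacobian for the map $(A,v)\mapsto(A-A_0-F(v),\,v-G(v))$ at fixed $B$, so the determinant is still $|\det(1-\mc{P}_{A_0}(x_0))|$; second, the deltas localize at $v=0$, where the drift vanishes (the torus is invariant, so $F(0)=0$), forcing $A=A_0$ and hence the evaluation $h(A_0,\varphi^B_1(x_0))$ as claimed. Equivalently, the only part of your normal form that is used is its first-order content at $v=0$, which does hold because $d\varphi^A_{-1}$ preserves the splitting $E_0\oplus(E_u\oplus E_s)$ along the torus. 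With this small repair your computation, including the bookkeeping of the measures (the density ratio equals $1$ on the fixed-point set, and the $B$-coordinate Lebesgue measure is Haar measure on $\a$ pushed to the orbit), coincides with the paper's argument.
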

\begin{proof} We follow the argument in \cite[Lemma B.1]{DZ16a}. Take an arbitrary basis $A_1,\ldots,A_\kappa$ of $\a$ and take $\phi: U\to B_{\R^n}(0,\eps)$ some diffeomorphism so that, if $y=\phi(x)$
\[
\begin{gathered}
\phi(x_0)=0,\quad  \forall x\in U, \, d\phi(x) X_{A_i}=\pl_{y_i}\\
d\phi(x_0)(E_u(x_0)\oplus E_s(x_0)) =  {\rm span}\{\partial _{y_{\kappa+1}},\ldots,\partial_{y_n}\}.
 \end{gathered}
 \]
Let $F:B_{\rr^{n-\kappa}}(0,\eps)\to B_{\rr^\kappa}(0,\eps_1)$ and $G:B_{\rr^{n-\kappa}}(0,\eps)\to B_{\rr^{n-\kappa}}(0,\eps_1)$ so that
\[
\phi \circ e^{-X_{A_0}}\circ \phi^{-1}(0,y'')=(F(y''),G(y'')), \quad y''\in \rr^{n-\kappa}, |y''|<\eps.
\]
and $F(0)=0, G(0)=0$.
For $A\in U$ and $(y',y'')\in B_{\rr^n}(0,\eps)$ we thus have, identifying $\a$ with $\rr^\kappa$
\[
\phi \circ e^{-X_A}\circ \phi^{-1}(y',y'')=(-A+A_0+y'+F(y''),G(y'')).
\]
Then for $(z',z'')$ and $(y',y'')$ in $B_{\R^n}(0,\eps)$, we can write, using that $\mc{K}_{e^{-X}}(A,x,x')=\delta_{x=\varphi^{A}_1(x')}$,
\[
\mc{K}_{e^{-X}}(A,\phi^{-1}(z',z''),\phi^{-1}(y',y''))=\delta(G(z'')-y'')\delta(y'+A-A_0-z'-F(z'')).
\]
Taking the flat trace gives
\[
\begin{split}
{\rm Tr}^\flat\Big(\int h e^{-X_A}\, dA\Big)=& \int \Big(h(\phi^{-1}(y',y''),A)\delta(y''-G(y''))\delta(A-A_0-F(y''))\Big)dy'dy''dA\\
=& \int_{B(0,\eps)} h(\phi^{-1}(y',y''), A_0+F(y''))\delta(y''-G(y''))dy'dy''.
\end{split}
\]
Now ${\rm Id}-dG(0)$ is invertible by the normal hyperbolicity of the action (it is conjugated to the Poincar\'e map $d_{x_0}\varphi^{A_0}_{-1}|_{E_u\oplus E_s}$), thus $y''=G(y'')$ has a unique solution $y''=0$ for $|y''|<\eps$ if $\eps>0$ is small enough, and we thus get
\[
{\rm Tr}^\flat\Big(\int h e^{-X_A}\, dA\Big)=\frac{1}{|\det(1-d_{x_0}\varphi^{A_0}_{-1}|_{E_u\oplus E_s})|}\int_{|y'|<\eps} h(\phi^{-1}(y',0), A_0)dy'
\]
this concludes the proof of Lemma \ref{lemma:Guillemin-local}.
\end{proof}

Now, call $T_{x_0}$ the periodic torus containing $x_0$, then if $h(x,A)=\psi(A)f(x)\in C_c^\infty(\W)\otimes C^\infty(\M)$ is such that $f$ is supported in a small neighborhood of  $T_{x_0}$ containing $x_0$, and
\[
(\cup_{T\in \mc{T}\setminus T_{x_0}} (L(T)\times T))\cap (\supp(\psi)\times \supp(f))=\emptyset ,
\]
(a choice of such $\psi, f$ is possible thanks to \eqref{finitenessperiodictori}) we have by a partition of unity and Lemma \ref{lemma:Guillemin-local}
\[\begin{split}
{\rm Tr}^\flat\Big(f\int \psi(A) e^{-X_A}\, dA\Big)=&\sum_{A\in \W\cap L(T_{x_0})}\frac{\psi(A) \int_{T_{x_0}}f\,d\lambda_{T_{x_0}}}{|\det(1-\mc{P}_{A})|}.
\end{split}\]
Consequently, using \eqref{finitenessperiodictori},
since the measure on the orbit is given by the push-forward of the Lebesgue measure on $\mathbb{A}$, the formula is thus established for compactly supported functions.

 The second observation is that given a proper subcone $\mathcal{C}\subset \W$, there exists $C>0$ such that for $T\in \T$, $A\in L(T)\cap \mathcal C$, and $x\in T$,

This complete the proof of Proposition~\ref{prop:Guillemin}.
\end{proof}

We can apply the Guillemin trace formula to our integrated propagators. Given $\psi\in C_c^\infty(\W)$ with $\int \psi=1$ and $\supp(\psi)$ contained in a small neighborhood of an element $A_0\in \W$, and $f\in C^\infty(\M)$, we obtain 
\begin{equation}\label{trafR^k}
\Tr^\flat(fR_\psi(\lambda)^k)=\sum_{T\in \mc{T}}
\sum_{A\in \W\cap L(T)}\frac{\int_{T}f\,d\lambda_T}{|\det(1-\mc{P}_{A})|}e^{-\lambda(A)}\psi^{*k}(A)
\end{equation}
where $\psi^{*k}$ is the $k$-th convolution power of $\psi$ and we used $R_{\psi}(\lambda)^k=R_{\psi^{*k}}(\lambda)$.

\begin{rem}
The formula of Proposition \ref{prop:Guillemin} has a direct extension to the action on vector bundles: instead of functions, let us consider the action on sections of some vector bundle $\mathcal{E}$. For $T\in \T$, and $A\in L(T)$, and $x\in T$, we denote by $M(A,x)$ the holonomy map on $\mathcal{E}_x$. Then for $f\in C^\infty(\mc M\times \mc C)$ the formula becomes
\[
\Tr^\flat_\mathcal{E}\left(\int_\W fe^{ - X_A}  dA \right) = \sum_{T\in \mathcal{T}} \sum_{A\in \mathcal{W}\cap L(T)} \frac{ \int_T f(x, A)\Tr M(A,x) d\lambda_T(x)}{|\det (1- \mathcal{P}_A) |}.
\]
Following the approach for flows in \cite{GLP13, BS20}, this gives a method to get rid of the Poincar\'e factor: We define for $m\in [0,n-\kappa]$ the bundle
\[
\mc{E}_0^m:= \{ \omega \in \Lambda^mT^*\M\,|\, \forall A\in \a, \,\iota_{X_A}\omega=0\}.
\]
and denote by $o(E_s)$ the orientation bundle which is a flat line bundle (see e.g. \cite[Definition 1.4]{BS20}).
The Guillemin trace formula for the bundle $\mc{E}_0^m\otimes o(E_s)$ reads
\[
\Tr^\flat\Big(\int f e^{-{ X}_A }|_{\mc{E}_0^m\otimes o(E_s)}\, dA\Big)=\sum_{T\in \mc{T}}
\sum_{A\in \W\cap L(T)}\frac{ \Tr(\Lambda^m\mc{P}_A){\rm sign}(\det (\mc{P}_A)_{|E_s}) \int_{T}f(x,A)d\lambda_T(x)}{|\det(1-\mc{P}_{A})|}.
\]
Using
\[
\det(1-\mc{P}_{A})=\sum_{m=0}^{n-\kappa}(-1)^m{\rm Tr}(\Lambda^m\mc{P}_A)\text{ and }
{\rm sign}(\det(1-\mc P_A)) = (-1)^{1+\dim E_s}{\rm sign}(\det \mc (P_A)_{|E_s})
\]
we get a new formula where the determinant of the Poincar\'e map disappears
\begin{equation}\label{case_of_forms}
\sum_{m=0}^{n-\kappa}(-1)^{1+m+\dim E_s} \Tr^\flat\Big(\int f e^{-X_A }|_{\mc{E}_0^m\otimes o(E_s)}\, dA\Big)=\sum_{T\in \mc{T}}
\sum_{A\in \W\cap L(T)} \int_{T}f(x,A)d\lambda_T(x).
\end{equation}

Note that this identity also opens a perspective to derive a Bowen formula for the measure of maximal entropy for general Anosov actions introduced in \cite{CaRH21}. In analogy to the case of Anosov flows one expects a leading pole for differential forms whose rank is the dimension of the unstable bundle. This requires however considerable additional work because the fact that leading the Ruelle-Taylor resonance gives rise to a measure is so far only established for the scalar case \cite[Proposition 5.4]{BGHW20}.
\end{rem}

\subsection{Flat trace of the shifted resolvent}

The purpose of this section is to study the shifted resolvent, and its flat trace. That is to say that for $f\in C^\infty(\M),\psi\in C_c^\infty(\mc W)$ and $\lambda\in \a_\C^\ast,s\in \C$, we will consider the function
\begin{equation}\label{eq:def-Zpsif}
Z_{\psi,f}(s,\lambda):={\rm Tr}^\flat(fR_\psi(\lambda)(s-R_\psi (\lambda))^{-1}).
\end{equation}
and prove the:
\begin{prop}\label{thm:Zf}
Let $\tau:\mathbb{A}\to {\rm Diffeo}(\M)$ be an Anosov action with Weyl chamber $\W$. There is $\eps>0$ small such that, for each 
$\psi\in C^\infty_c(\W)$ with support contained in a ball of radius $\eps$ and $\|\psi\|_{L^\infty}\leq 1$, the following holds true.
For $f\in C^\infty(\M)$, the function $Z_{\psi,f}$ of \eqref{eq:def-Zpsif} is well-defined and holomorphic in $(s,\lambda)$ for $\lambda\in \a_\C^*$ with $\Re(\lambda)$ large enough\footnote{Here, large enough means that the real part $\Re(\lambda)\in\mathfrak a^*$ is a sufficiently positive linear functional on the open Weyl chamber $\W\in\a$.} and $s\in \C$ with $|s-1|<1/2$, and it has a meromorphic extension to $B_\C(1,1/2) \times \a_\C^*$.
Moreover, for each compact set $K\subset \a_{\C}^*$ there is $N_K>0$ such that  $Z_{\psi,f}$ is holomorphic in $(\C\setminus B_{\C}(0,N_K))\times K$ and the following identity holds, with the right hand side converging:
\begin{equation}\label{Zfsum}
Z_{f,\psi}(s,\lambda)=\sum_{k=1}^\infty s^{-k}\sum_{T\in \mc{T}}
\sum_{A\in \W\cap L(T)}\frac{\int_{T}f\,d\lambda_T}{|\det(1-\mc{P}_{A})|}e^{-\lambda(A)}\psi^{*k}(A).
\end{equation}

Finally, if we replace $\psi$ by $\psi_{\sigma}:=\psi(\cdot-\sigma)$, then $Z_{f,\psi_\sigma}$ depends continuously on $\sigma$ in a small neighbourhood of $0\in \a^*$. The topology on $Z_{f,\psi_\sigma}$ is given by uniform convergence on compact subsets of the holomorphic regions in $\a_\C^* \times B_\C(1,1/2)$.
\end{prop}

The proof of this theorem will follow the ideas of the proof by Dyatlov and Zworski of the meromorphic extension of the dynamical determinant of Anosov flows \cite{DZ16a}. The idea is to use propagation of singularities, and source/sink estimates to control the wavefront set of the resolvent. We will explain this in detail. If $\Gamma\subset T^*(\M\times \M)\setminus\{0\}$ is a conic closed set, define
\[
C^{-\infty}_{\Gamma}(\M\times \M):=\{u\in C^{-\infty}(\M\times\M)\,|\, \WF(u)\subset \Gamma\}
\]
the space of distributions on $\M\times \M$ with wavefront set included in $\Gamma$. Its topology is defined using sequences in \cite[Definition 8.2.2]{Hoe03}.

To analyze the wavefront set of the resolvent of $R_\psi(\lambda)$, it will be convenient to work with a small semiclassical parameter $h>0$. As mentioned above \eqref{HNG_h}, we use a semiclassical quantization $\Op_h$ and define $\mc{H}_h^{NG}=\Op_h(e^{NG})^{-1}L^2(\M)$. Recall, that, as a vector space, $\mc{H}_h^{NG}$ is equal to $\mc{H}^{NG}$; only the norm is different. We will denote by $\Psi^m_h(\M)$ the space of semiclassical pseudo-differential operators of order $m\in \R$ (see \cite{Zwo12} or \cite[Appendix E]{DZ19}). We recall briefly that $Q\in \Psi^m_h(\M)$ can be written
as $Q=\Op_h(q)+Q'$ with $Q'$ an operator having smooth Schwartz kernel with its $C^k$ norms being $\mc{O}(h^\infty)$ for all $k\in \N$ and $q\in S^{m}(T^*\M)$ a semi-classical symbol of order $m$. We use the notation $\bbar{T}^*\mc{M}$ for the radially compactified cotangent bundle (see \cite[Appendix E]{DZ19}) and recall that the semiclassical wavefront set is the closed subset $\WF_h(Q)\subset \bbar{T}^*\M$ defined as the complement to the set of points where $q$ and its derivatives is equal to $\mc{O}(h^\infty (1+|\xi|)^{-\infty})$ (\cite[Definition E.26]{DZ19}).
We denote by $\Psi^{\rm comp}_h(\M)$ the space of those semiclassical pseudo-differential operators with compact semiclassical wavefront set. Below, we say that a family $\theta\in \R^n\mapsto \mc{K}_\theta\in C^{-\infty}(\M\times \M)$ of Schwartz kernels of operators has wavefront set contained in $\Gamma\subset T^*(\M\times \M)$ locally uniformly in $\tau$ if for each closed conic subset
$\Omega\in T^*(\M\times \M)\setminus\{0\}$ not intersecting $\Gamma$, and each $B,B'\in \Psi^0(\M)$ satisfying $\WF(B)\times\WF(B')\subset \{(x,\xi,x',-\xi')\,|\, (x,\xi,x',\xi')\in\Omega\}$, for each $N\in \N$ and each compact set $K\subset \R^n$, there is $C_{N,K,B,B'}>0$ so that for all 
$\theta\in K$ 
\[ \|B\mc{K}_\theta B'\|_{H^{-N}(\M)\to H^{N}(\M)}\leq C_{N,K,B,B'}.\]
There is a semi-classical version, for which we refer to \cite[Lemma 2.3]{DZ16a}\footnote{Our condition readily implies the condition of \cite[Lemma 2.3]{DZ16a}, for it suffices to take $U,V$ in \cite[Lemma 2.3]{DZ16a} given by the elliptic set of two $B,B'\in \Psi^0_h(\mc{M})$ that are respectively semi-classically ellpitic near $(x,\xi)$ and $(x',\xi')$.} or \cite[Lemma 6.2]{DGRS18} for the parameter dependent version. If $\mc{K}_\theta$ is an $h-$tempered distribution, a point $(x,\xi,x',-\xi')\in T^*(\mc{M}\times \mc{M})$ is not in $\WF_h(\mc{K}_{\theta})$ uniformly in $\theta\in K$ if there are $\theta$-independent neighborhoods $U$ of $(x',\xi')$ in $T^*\mc{M}$ and $V$ of $(x,\xi)$ such that for any
$B,B'\in \Psi_h^{\rm comp}(\mc{M})$ with $\WFh(B)\subset V$ and $\WFh(B')\subset U$, for any $m\geq 0$ there is $C_m$ so that for all $h>0$ small and all $\theta\in K$
\[
\|B \mc{K}_\theta B'\|_{\mc{L}(L^2)} \leq C_mh^m.
\]
For notational simplicity we shall say that the RHS is an $\mathcal{O}(h^\infty)$ uniformly in $\theta\in K.$
\begin{prop}\label{prop:WF-shifted-resolvent}
Let $\mathcal{O}$ be a ball centered in $A_0$ and of radius $\delta>0$ small\footnote{As explained in the proof of Lemma \ref{lem:WFtoprove}, the reason of taking $\mc{O}$ small is that we shall need to use an escape function that is valid for all $A\in \mc{O}$.} with closure contained in $\W$, let   $\psi \in C^\infty_c(\mathcal{O};\R^+)$ and define $\psi_\sigma=\psi(\cdot-\sigma)$ for $\sigma\in \a$ small so that $\psi_\sigma\in C_c^\infty(\mc{O})$.\\
1) There is $c_0>0$, $c_1>0$, $c_2>0$ such that for all $N>0$ 
the operator $T_{\psi_\sigma}(\lambda,s):=R_{\psi_\sigma}(\lambda)(R_{\psi_\sigma}(\lambda)-s)^{-1}:\mc{H}^{NG}\to \mc{H}^{NG}$ is holomorphic in the parameter $(\la,s)$ in the region 
\[
\{ (\la,s)\in \a_\C\times \C\,|\, {\rm Re}(\la(A_0))>-c_1N, |s|>c_2N\} 
\]
and meromorphic in the parameter $(\la,s)$ in the region
\begin{equation}\label{region_meromorphy} 
 \{ (\la,s)\in \a_\C\times \C\,|\, {\rm Re}(\la(A_0))>-c_1N, |s|>e^{-c_0N}\}.
 \end{equation}
2) Locally uniformly in $(\sigma,\lambda,s)$ (where it is defined) the Schwartz kernel
$\mc{K}_{T_{\psi_\sigma}(\lambda,s)}$  of $R_{\psi_\sigma}(\lambda)(R_{\psi_\sigma}(\lambda)-s)^{-1}$ has wavefront set contained in
\[
\WF(\mc{K}_{T_\psi(s)})\subset
\left\{(e^{X^H_A}(x,\xi),(x,\xi))\,\middle|\, (x,\xi)\in E_u^*\oplus E_s^*, \, A\in k\mathcal{O},\, k\in\N\setminus\{0\}\right\}\cup (E_u^*\times E_s^*)
\]
where $X^H_A$is the Hamilton flow of $\xi(X_A(x))$, the principal symbol of $-iX_A$ for $A\in \W$.\\ 
3) There is a cone
$\Gamma\subset T^*(\M\times\M)$ such that $\Gamma\cap N^*\Delta=\emptyset$ and
\[
(\sigma,\lambda,s)\mapsto \mc{K}_{T_{\psi_\sigma}(\lambda,s)}\in C^{-\infty}_\Gamma(\M\times \M)
\]
is continuous where it is defined.
\end{prop}

This is the main technical result; it is similar to \cite[Proposition 3.3]{DZ16a}. Using the tools of \cite[Chapter 8]{Hoe03}, we deduce directly that $Z_{f,\psi}$ is well-defined as a meromorphic function (with the continuous dependence on $\sigma$ for $Z_{f,\psi_\sigma}$). It will remain to obtain Formula \eqref{Zfsum} to prove Proposition \ref{thm:Zf}.

To prove Proposition \ref{prop:WF-shifted-resolvent}, we will rely on a wavefront set estimate for a parametrix -- much as \cite{DZ16a}. This is done in the following Lemma, the proof of which is defered to Section \ref{proofLemma3.7}. This Lemma can be viewed as a refinement of \cite[Lemma 4.14]{BGHW20}.
\begin{lemma}\label{lem:WFtoprove}
Let $\mathcal{O}$ as in Proposition \ref{prop:WF-shifted-resolvent}. 
There exists $c_0,c_1>0$ and $Q\in \Psi_{h}^{\rm comp}(\M)$ such $R_\psi(\lambda)(1-Q)-s$ has a bounded inverse on $\mc{H}_h^{NG}$ for $|s|>e^{-c_0N}$ and $\Re \lambda(A_0) > - c_1 N$, $|\Im \lambda|< h^{-1/2}$ for all $N>0$. In that region, its inverse 
\begin{equation}\label{defTspiQ}
T_{\psi_\sigma}^Q(\lambda, s):=(R_{\psi_\sigma}(\lambda)(1-Q)-s)^{-1}
\end{equation} is an analytic family of bounded operator in $(\lambda,s)$ and its Schwartz kernel $\mc{K}_{T_{\psi_\sigma}^Q(\lambda,s)}$ satisfies uniformly in $(\sigma,\lambda,s)$
\begin{equation}\label{WFtoprove}
\WF_h(\mc{K}_{T_{\psi_\sigma}^Q(s)})\cap T^*(\M\times \M)\subset N^*\Delta\cup \Omega_+(\mathcal{O})
\end{equation}
\begin{equation*}
\Omega_+(\mathcal{O}):= \bigcup_{k\geq 1} \Omega_+^k(\mathcal{O}), \quad \Omega_+^k(\mathcal{O}):=\left\{(e^{X^H_A}(x,\xi),(x,-\xi))\ \middle|\ (x,\xi)\in E_u^*\oplus E_s^*, \, A\in k \mathcal{O} \right\}.
\end{equation*}
\end{lemma}

\begin{proof}[Proof of Proposition \ref{prop:WF-shifted-resolvent}]
Using the operator $T_{\psi_\sigma}^Q(\lambda,s)$ of \eqref{defTspiQ}, we write 
\[(R_{\psi_\sigma}(\lambda)-s)T_{\psi_\sigma}^Q(\lambda,s)=1+R_{\psi_\sigma}(\lambda)QT_{\psi_\sigma}^Q(\lambda,s)\] and $T_{\psi_\sigma}^Q(\lambda,s)(R_{\psi_\sigma}(\lambda)-s)=1+T_{\psi_\sigma}^Q(\lambda,s)R_{\psi_\sigma}(\lambda)Q$ for $|s|>e^{-c_0N}$ on
$\mc{H}_h^{NG}$. Denote by $Q':=R_{\psi_\sigma}(\lambda)Q\in \Psi_h^{\rm comp}(\mc{M})$ which is compact on $\mc{H}_h^{NG}$.
Since $\|T_{\psi_\sigma}^Q(\lambda,s)\|_{\mc{H}_h^{NG}}=\mc{O}(|s|^{-1})$ uniformly in 
$h>0$ and locally uniformly in $\lambda,\sigma$  when $|s|\to \infty$, we see that $Q'T_{\psi_\sigma}^Q(\lambda,s)$ and $T_{\psi_\sigma}^Q(\lambda,s)Q'$ are invertible for some $s$.
We can then use the multivariable Fredholm analytic theorem of Proposition \ref{prop:Fredholm-several-variables}: this shows that $(1-Q'T_{\psi_\sigma}^Q(\lambda,s))^{-1}$ extends meromorphically in $(\lambda,s)$ in the region \eqref{region_meromorphy} on the space $\mc{H}_h^{NG}$ and
\[(R_{\psi_\sigma}(\lambda)-s)^{-1}=T_{\psi_\sigma}^Q(\lambda,s)(1-Q'T_{\psi_\sigma}^Q(\lambda,s))^{-1}\]
is a meromorphic extension of $(R_{\psi_\sigma}(\lambda)-s)^{-1}\in \mc{L}(\mc{H}_h^{NG})$ in \eqref{region_meromorphy} and shows 1).

This argument also implies that for $|s|\gg 1$ large
\[ (R_{\psi_\sigma}(\lambda)-s)^{-1}=T_{\psi_\sigma}^Q(\lambda,s)+ T_{\psi_\sigma}^Q(\lambda,s)Q'T_{\psi_\sigma}^Q(\lambda,s)+T_{\psi_\sigma}^Q(\lambda,s)Q'(R_{\psi_\sigma}(\lambda)-s)^{-1}Q'T_{\psi_\sigma}^Q(\lambda,s).\]
By Lemma~\ref{lem:WFtoprove}  and since $R_{\psi_\sigma}(\lambda)$ is $h$-independent, we have
\[\begin{gathered}
\WFh(T_{\psi_{\sigma}}^Q(\lambda,s)Q'T_{\psi_{\sigma}}^{Q}(\lambda,s))\cap T^*(\M\times \M)\subset \Upsilon,\\
\WFh(T_{\psi_{\sigma}}^Q(\lambda,s)Q'(R_{\psi_\sigma}(\lambda)-s)^{-1}Q'T_{\psi_{\sigma}}^Q(\lambda,s))\cap T^*(\M\times \M)\subset  \Upsilon,\\
\Upsilon:=\{(x,\xi,x',-\xi')\in T^*(\M\times \M)\, | \, (x,\xi)\in E_u^*\oplus E_s^*,\, \exists k,k'\in \N, \exists A\in k\mc{O},\\
 \exists A'\in k'\mc{O},
e^{-X_A^H}(x,\xi)\in
\WFh(Q'), e^{X_{A'}^H}(x',\xi')\in
\WFh(Q')\}
\end{gathered}\]
and this holds locally uniformly with respect to $(\sigma,\lambda,s)$.
Since $\WFh(R_{\psi_\sigma}(\lambda))\subset \Omega_+^1$, this shows that
\[\WFh(R_{\psi_\sigma}(\lambda)(R_{\psi_\sigma}(\lambda)-s)^{-1})\cap T^*(\M\times \M)\subset \Omega_+(\mc{O})\cup \Upsilon.\]
Now, using that $\WFh(Q')$ is a compact set, we observe by hyperbolicity of the action that if $L\gg 1$ is large, then the set
\[\{(x,\xi)\in T^*\M\,|\, |\xi|>L\}\cap \bigcup_{k\geq 0}\bigcup_{A\in k\mc{O}}e^{\pm X_A^H}(\WFh(Q')\cap (E_u^*\oplus E_s^*))\]
is contained in $\{|\xi|>L\}\cap \mc{C}^L_{\pm}$ where $\mc{C}^L_{+}$ is a small conic neighborhood of $E_u^*$ and $\mc{C}^L_{-}$ is a small conic neighborhood of $E_s^*$, with the size of the cone sections going to $0$ as $L\to +\infty$. Since $T_{\psi_\sigma}(\lambda,s)=R_{\psi_\sigma}(\lambda)(R_{\psi_\sigma}(\lambda)-s)^{-1}$ is independent of $h$, we have
$\WF(\mc{K}_{T_{\psi_\sigma}(s)})=\WFh(\mc{K}_{T_{\psi_\sigma}(s)})\cap T^*(\M\times \M)\setminus \{0\}$, thus by taking $L\to \infty$ we obtain the desired statement, and this holds locally uniformly with respect to $(\sigma,\lambda,s)$. In particular, since $\Omega_+(\mc{O})\cup (E_u^*\times E_s^*)$ is disjoint from $N^*\Delta$, we have proved 2) and 3) by choosing a cone $\Gamma$ containing $\Omega_+(\mc{O})\cup (E_u^*\times E_s^*)$.
\end{proof}

By the wavefront estimates from Proposition~\ref{prop:WF-shifted-resolvent} it follows that the flat trace
$Z_{f,\psi_\sigma}(s,\lambda)$ defined by \eqref{eq:def-Zpsif} is well defined and depends meromorphically on $(\lambda,s)$ in \eqref{region_meromorphy} for each $N>0$, i.e. in $\a_\C \times (\C\setminus\{0\})$: indeed, this is a consequence of the fact that the flat trace ${\rm Tr}^\flat$ is a continuous linear form on the space $C^{-\infty}_\Gamma(\M\times \M)$ equipped with its natural topology given by \cite[Definition 8.2.2]{Hoe03}, provided $\Gamma\cap N^*\Delta=\emptyset$ (the meromorphicity in $\la$ can be seen by using the Cauchy characterization of holomorphic functions using contour integrals).
To finish the proof of Proposition~\ref{thm:Zf}, it suffices to prove the expansion \eqref{Zfsum} for some open set of $s,\lambda$: indeed, both sides of \eqref{Zfsum} are analytic with respect to $\lambda,s$ in some open set and the equality will hold by analytic continuation. For $\psi\in C_c^\infty(\W,\R^+)$ with support near a given $A_0\in\W$ as above and $|\sigma|<\eps$ small, since $R_{\psi_\sigma}(\lambda)$ is bounded on $\mc{H}^{NG}$, for $|s|\gg 1$ we have as a converging series
\begin{equation}\label{series_resolvante}
R_{\psi_\sigma}(\lambda)(s-R_{\psi_\sigma}(\lambda))^{-1}=\sum_{k=1}^{\infty} s^{-k}R_{\psi_\sigma}(\lambda)^k.\end{equation}
Formally, if we take the flat trace of the above identity multiplied by $f$ on both sides, we obtain using \eqref{trafR^k} the desired identity \eqref{Zfsum}.
\begin{lemma}\label{Zorbitesperiod}
Let $\psi\in C_c^\infty(\W,\R^+)$, $|\sigma|<\eps$ and $\psi_\sigma=\psi(\cdot-\sigma)$ as above. Then for each $\lambda\in \a_\C^*$, if $|s|\gg 1$ is large enough, we obtain
\begin{equation*}
Z_{f,\psi_\sigma}(s,\lambda)=\sum_{k=1}^\infty s^{-k}\sum_{T\in \mc{T}}
\sum_{A\in \W\cap L(T)}\frac{\int_{T}f\,d\lambda_T}{|\det(1-\mc{P}_{A})|}e^{-\lambda(A)}\psi_\sigma^{*k}(A).
\end{equation*}
Moreover the left hand side extends meromorphically in $(\lambda,s)\in \a_\C\times (\C\setminus \{0\})$.
\end{lemma}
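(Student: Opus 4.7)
The plan is to expand the shifted resolvent as a Neumann series in the region where it converges, then apply the Guillemin trace formula \eqref{trafR^k} termwise. Concretely, for $|s|$ larger than the operator norm of $R_{\psi_\sigma}(\lambda)$ on $\mc{H}^{NG}$, I would write for each $N\in\N$
\[
R_{\psi_\sigma}(\lambda)(s-R_{\psi_\sigma}(\lambda))^{-1} \;=\; \sum_{k=1}^{N} s^{-k} R_{\psi_\sigma}(\lambda)^{k} \;+\; s^{-N} R_{\psi_\sigma}(\lambda)^{N} T_{\psi_\sigma}(\lambda,s),
\]
with the remainder of $\mc{L}(\mc{H}^{NG})$-norm $\mc{O}(|s|^{-N})$. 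Each Schwartz kernel $\mc{K}_{R_{\psi_\sigma}(\lambda)^{k}}$ has wave-front set contained in $\Omega_+^{k}(\mc{O})$ by the same pushforward computation used at the start of the proof of Proposition~\ref{prop:Guillemin}, and the remainder has kernel in the cone $\Gamma\supset \Omega_+(\mc{O})\cup (E_u^*\times E_s^*)$ supplied by Proposition~\ref{prop:WF-shifted-resolvent}. Crucially, $\Gamma\cap N^*\Delta=\emptyset$ (the hyperbolicity of the action forbids $(d\varphi_1^A)^{-T}\xi=-\xi$ on $E_u^*\oplus E_s^*$), so the flat trace of each piece is well defined.

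The second step is to exchange the flat trace with the sum. The partial sums converge to $T_{\psi_\sigma}(\lambda,s)$ in $\mc{L}(\mc{H}^{NG})$ and all their kernels lie in the single space $C^{-\infty}_\Gamma(\M\times\M)$. I would upgrade this to convergence in the topology of $C^{-\infty}_\Gamma$: for any pseudodifferential cutoff $B$ with $\WF(B)\cap\Gamma=\emptyset$, Egorov and the parametrix estimates \eqref{R(1-P)}--\eqref{boundsR^0R^1} from the proof of Proposition~\ref{prop:WF-shifted-resolvent} show that $B$ applied to the remainder kernel is smoothing with $C^{k}$ norm $\mc{O}(|s|^{-N})$. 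Since the pullback $\iota_\Delta^*$ and integration against $f$ are continuous on $C^{-\infty}_\Gamma$, this yields
\[
Z_{f,\psi_\sigma}(s,\lambda) \;=\; \sum_{k=1}^\infty s^{-k}\, \Tr^\flat(f R_{\psi_\sigma}(\lambda)^{k}),
\]
and a direct application of the Guillemin formula \eqref{trafR^k} to each term gives the claimed expansion.

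It remains to check that the resulting double series converges absolutely for $|s|$ large. By Young's inequality and $\|\psi_\sigma\|_{L^1}=1$ one has $\|\psi_\sigma^{(k)}\|_{L^\infty}\leq \|\psi_\sigma\|_{L^\infty}$, and $\supp\psi_\sigma^{(k)}\subset k\mc{O}$ forces $|A|\leq C k$ for any contributing $A$. The factor $|\det(1-\mc{P}_A)|^{-1}$ is bounded by $e^{C|A|}$ uniformly for $A$ in the closed cone $\overline{\mc{O}}\subset\mc{W}$ (the hyperbolicity constants do not degenerate inside $\mc{W}$, as in the last lines of the proof of Proposition~\ref{prop:Guillemin}). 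Combining this with the exponential bound from Lemma~\ref{estimate_periodic} on the number of lattice points $A\in\bigcup_{T} L(T)$ with $|A|\leq Ck$, the $k$-th term of the outer sum is $\mc{O}(e^{C'k})$, so the whole series converges absolutely for $|s|>e^{C'}$.

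The main obstacle will be the second step, namely upgrading operator-norm convergence on $\mc{H}^{NG}$ to convergence of Schwartz kernels in $C^{-\infty}_\Gamma$. This is where the microlocal calculus of \cite{BGHW20} is essential: one must control, uniformly in $N$, both the common wave-front bound in $\Gamma$ and the quantitative $\mc{O}(|s|^{-N})$ smallness after microlocal cut-offs, in order for the flat trace to be continuous along the series. Once this is in place the rest of the argument reduces to the Guillemin formula and the orbit-counting bound of Lemma~\ref{estimate_periodic}.
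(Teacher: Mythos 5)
Your overall skeleton (Neumann expansion, termwise application of \eqref{trafR^k}, counting bound for absolute convergence) matches the statement, but the central step — exchanging $\Tr^\flat$ with the infinite sum — is asserted rather than proved, and the justification you offer does not follow from the results you cite. To apply H\"ormander's continuity of $\iota_\Delta^*$ you need convergence of the partial sums in the topology of $C^{-\infty}_\Gamma(\M\times\M)$, i.e.\ weak convergence \emph{together with} bounds, uniform along the whole Neumann series, of the seminorms $\sup_{\xi\in V}\langle\xi\rangle^{M}|\widehat{\chi\,\mc{K}}(\xi)|$ on closed cones $V$ disjoint from $\Gamma$ (in particular near $N^*\Delta$). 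The estimates \eqref{R(1-P)}--\eqref{boundsR^0R^1} and the wave-front statement of Proposition~\ref{prop:WF-shifted-resolvent} are semiclassical: they give $\mc{O}(h^\infty)$ bounds as $h\to 0$ at fixed $(\sigma,\lambda,s)$, uniformly in those parameters, but they say nothing quantitative about the $N$-th remainder $s^{-N}R_{\psi_\sigma}(\lambda)^{N}T_{\psi_\sigma}(\lambda,s)$ at fixed $h$; the operator-norm decay $\mc{O}(|s|^{-N})$ on $\mc{H}^{NG}$ does not control these microlocal seminorms, since $R^{N}$ propagates singularities over times of order $N$ and one would have to redo the microlocal analysis with uniformity in the number of iterations. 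So the claim that ``$B$ applied to the remainder kernel is smoothing with $C^{k}$ norm $\mc{O}(|s|^{-N})$'' is exactly the missing proof, not a consequence of the quoted lemmas.

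The paper circumvents this difficulty by a different mechanism: it regularizes with the mollifiers $E_\eps$ of \cite[Section 2.4]{DZ16a}, for which $\lim_{\eps\to 0}\Tr(E_\eps A E_\eps)=\Tr^\flat(A)$ whenever $\WF(\mc{K}_A)\cap N^*\Delta=\emptyset$. At fixed $\eps$ the trace-norm bound \eqref{normetrace} lets one sum the Neumann series inside the honest trace, and the interchange of $\lim_{\eps\to 0}$ with $\sum_k$ is done by dominated convergence using the $\eps$-\emph{independent} bound \eqref{normetrace2}, which is not an operator-norm estimate at all: it comes from the geometric volume estimate \eqref{volumeestimate} on approximately periodic points in Lemma~\ref{estimate_periodic}. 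Note that in your write-up you only invoke the lattice-point counting part of Lemma~\ref{estimate_periodic}, for the convergence of the final double series; the volume estimate, which is what actually makes the trace/sum interchange work in the paper, plays no role in your argument. Either supply the uniform $C^{-\infty}_\Gamma$ estimates along the Neumann series, or replace your second step by the $E_\eps$-regularization argument.
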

\begin{proof} First, as in the rank $1$ case, we need an exponential estimate on the number of periodic orbits in the region $\{A\in\W\,|\, |A|\leq L\}$ as $L\to \infty$, which will ensure the convergence of the RHS of \eqref{Zfsum} when $|s|\gg 1$ is large. This is the content of Lemma \ref{estimate_periodic}.

We then follow the argument of \cite{DZ16a} in the rank $1$ case. By \cite[Section 2.4]{DZ16a}, there is a family of operators $E_\eps$ with smooth integral kernels $E_\eps(x,y)=C_\eps(x)F(d_g(x,y)/\eps)$ approximating the Identity as a bounded operator $H^{\eps_0}(\M)\to L^2(\M)$ as $\eps\to 0$ for any fixed small $\eps_0>0$, with $C_\eps(x)=\mc{O}(\eps^{-n})$ where $n:=\dim\M$ and $F\in C_c^\infty(\R, [0,1])$ satisfies $F(r)=1$ near $r=0$. Furthermore for each $A:C^\infty(\M)\to \mc{D}'(\M)$ with $\WF(A)\cap N^*\Delta=\emptyset$  (see \cite[Lemma 2.8]{DZ16a})
\begin{equation}\label{approx_trace}
\lim_{\eps\to 0}{\rm Tr}(E_\eps AE_\eps)={\rm Tr}^\flat(A).
\end{equation}
Moreover, the proof of \cite[Lemma 4.1]{DZ16a} yields that there is $C>0$ such that for each $A\in\W$ (with $\|\cdot\|_{\rm Tr}$ the trace norm) $\|E_\eps e^{-X_A}E_\eps\|_{{\rm Tr}}\leq Ce^{C|A|}\eps^{-n-\delta_0}$ for any $\delta_0>0$ fixed small. Since $\supp(\psi_{\sigma}^{*k})\subset \{A\in \a\,|\, |A-kA_0|<\delta k\}$ for some small $\delta>0$ (depending on $\supp(\psi)$) and $A_0\in\W$ fixed, this implies that there is $C>0$ such that for all $\eps>0$ and $k\in\N$
\begin{equation}\label{normetrace}
\|E_\eps R_{\psi_\sigma}^k(\lambda)E_\eps\|_{{\rm Tr}}\leq Ce^{C\langle \lambda \rangle k}\eps^{-n-\delta_0}.
\end{equation}
This proof also gives for some uniform $C>0$
\[\begin{split}
|{\rm Tr}(E_\eps R_{\psi_\sigma}^k(\lambda)E_\eps)|\leq &  \int_{\W}e^{-{\rm Re}\lambda(A)}\psi_{\sigma}^{*k}(A)|{\rm Tr}(E_\eps e^{-X_A}E_\eps)| dA\\
\leq & \int_{\W}\psi_{\sigma}^{*k}(A)e^{-{\rm Re}\, \lambda(A)}\int_{\M\times \M} E_\eps(x,y)E_\eps(\varphi^A_{-1}(y),x)dv_g(x)dv_g(y)dA\\
\leq & C\eps^{-2n}\int_{\W\times\M\times\M} \psi_{\sigma}^{*k}(A)e^{-{\rm Re}\, \lambda(A)} {\bf 1}_{\{d_g(x,y)<c_1\eps,d_g(x,\varphi^A_{-1}(y))<c_1\eps\}}dv_g(x)dv_g(y)dA\\
\leq & C\eps^{-n}\int_{\W\times \M}\psi_{\sigma}^{*k}(A) e^{-{\rm Re}\,\lambda(A)} {\bf 1}_{\{d_g(y,\varphi^A_{-1}(y))<2c_1\eps\}}dv_g(y)dA.
\end{split}\]
We used Lidskii's theorem in the second line.
Using Lemma \ref{estimate_periodic} and the support property of $\psi_\sigma^{*k}$, this gives
\begin{equation}\label{normetrace2}
|{\rm Tr}(E_\eps R^k_{\psi_\sigma}(\lambda)E_\eps)|\leq Ce^{C\langle\lambda\rangle k}.
\end{equation}
Using respectively \eqref{series_resolvante}, \eqref{normetrace} and \eqref{normetrace2} we can thus write for $\lambda$ fixed and $|s|$ large enough
\[\begin{split}
Z_{f,\psi_\sigma}(s,\lambda)=& \lim_{\eps\to 0}{\rm Tr}(f\sum_{k=1}^{\infty} s^{-k}E_\eps R_{\psi_\sigma}(\lambda)^kE_\eps)=\lim_{\eps\to 0}\sum_{k=1}^{\infty} s^{-k}{\rm Tr}(fE_\eps R^k_{\psi_\sigma}(\lambda)E_\eps)\\
=& \sum_{k=1}^{\infty} s^{-k}\lim_{\eps\to 0}{\rm Tr}(fE_\eps R^k_{\psi_\sigma}(\lambda)E_\eps)= \sum_{k=1}^{\infty} s^{-k}{\rm Tr}^\flat(fR^k_{\psi_\sigma}(\lambda))
\end{split}\]
where we used \eqref{approx_trace} for the last identity. The formula \eqref{trafR^k} concludes the proof since the meromorphy in $(\lambda,s)$ was discussed before the Lemma.
\end{proof}

\subsection{Proof of Lemma \ref{lem:WFtoprove}}\label{proofLemma3.7}

1) We start with the proof of existence of $T^Q_\psi$. In that aim, we will show that there is $c_1>0,c_0>0$ such that, for all $h>0$ small and all $N>0$,
$R_{\psi}(\la)$ is quasi-compact on $\mc{H}_{h}^{NG}$  for $|s|>e^{-c_0N}$ and $\Re \lambda(A_0) > - c_1 N$, $|\Im \lambda|< h^{-1/2}$.
We pick $\Gamma_{E_0^*}\subset T^*\M$, a conic neighborhood of $E_0^*$, and then $G\in C^\infty(T^*\M)$ an escape function for all $A\in \mc{O}$ compatible with $c_X>0$ and $\Gamma_{E_0^*}\subset T^*\M$ in the sense of \cite[Definition 4.1]{BGHW20}. 
This is possible by \cite[Lemma 4.4]{BGHW20}; the proof amounts to take an escape function for  $A_0\in\mathcal{O}$, and show it remains adapted to all $A$'s sufficiently close to $A_0$. Notice that this is the first reason for assuming that $\mathcal{O}$ is small enough.
By the properties \eqref{escapefct} of the escape function, there is $c_X>0$ such that for $A\in\mathcal{O}$, and some $r>0$ large enough,
\[
\bigcup_{t\in [0,1]} e^{tX_A^H}(x,\xi)\cap  \Gamma_{E_0^*}=\emptyset,\  |\xi|>r \Longrightarrow   G(e^{X_{A}^H}(x,\xi))-G(x,\xi)\leq -c_X.
\]
As it will be used several times below, notice that for $\psi \in C^\infty_c(\mathcal{O})$,
\[
\supp(\psi^{*k})\subset \supp(\psi)+\dots+\supp(\psi) \subset k\mathcal{O}.
\]
This is contained in a ball of radius $\delta k$ centered at $kA_0$ for some small $\delta>0$. We imitate now the proof of \cite[Lemma 4.5]{BGHW20} but with a semiclassical quantization. The idea here is that in the direction of the flow $E^\ast_0$, averaging the flow is regularizing, and in the transverse direction, we can use the escape function to obtain some compactness. Let $\Gamma_0:=\Gamma_{E_0^*}\cap\{|\xi|\geq 1\}$ and choose $P\in \Psi_h^0(\M)$ which satisfies
\[
\WF_h(P)\subset \left\{(x,\xi)\in \bbar{T}^*\M\ \middle|\  \forall t\in [0,1], \forall A\in \mc{O}, \ e^{tX_A^H}(x,\xi)\notin \Gamma_0\right\}
\]
and the semi-classical principal symbol $\sigma(P)$ of $P$ satisfies $0\leq \sigma(P)\leq 1$. The operator $P$ is microlocalizing away from the neutral direction $E^\ast_0$. We also pick $\Gamma'_0$ a neighbourhood of $\Gamma_0$ which is conic for $|\xi|\geq 1$ and contained in $|\xi|>1/2$, and assume that
\[
\WF_h(1-P) \subset \Gamma'_0
\]
(i.e. $P$ is microlocally equal to $1$ outside $\Gamma_0$.) Note that we can chose $\Gamma_0'$ such that $T^*\M\setminus \Gamma_0$ is for $|\xi|>1$ an arbitrary small cone around $E_u^*\oplus E_s^*$. Setting for $A\in \mathcal{O}$, $B_A:=e^{X_A}\Op_h(e^{NG})e^{-X_A}$, we have
\[
\Op_h(e^{NG})e^{-X_A}P\Op_h(e^{NG})^{-1}=e^{-X_A}B_AP\Op_h(e^{NG})^{-1}.
\]
The semi-classical principal symbol of $B_AP\Op_h(e^{NG})^{-1}\in \Psi_h^0(\M)$ is (by Egorov's Lemma and the composition rule for peudo-differential operators)
\[
\sigma(B_AP\Op_h(e^{NG})^{-1})=e^{N(G\circ e^{X^H_A}-G)}\sigma(P) \mod hS^{-1}.
\]
Using the properties \eqref{escapefct} of the escape function $G$, we find for $r>0$ large enough and some $c_X>0$
\[
\sup_{|\xi|\geq r}|\sigma(B_AP\Op_h(e^{NG})^{-1})(x,\xi)|\leq
e^{-c_XN}.
\]
Next, we introduce $Q_0\in \Psi^{\rm comp}_h(\M)$ so that $\WF_h(Q_0)\subset \{|\xi|\leq 2r\}$ and $\WF_h(1-Q_0)\subset \{|\xi|\geq r\}$ with $0\leq \sigma(Q_0)\leq 1$. We also let $C_{\mc{O}}:=\max_{A\in\mc{O}}\|e^{-X_A}\|_{\mc{L}(L^2(\M))}$. The previous estimate implies that for all $h>0$ small and $A\in \mc{O}$,
\begin{equation}\label{firstboundeXA}
\|e^{-X_A}P(1-Q_0)\|_{\mc{L}(\mc{H}_h^{NG})}\leq C_{\mc{O}}e^{-c_XN}+\mc{O}(h).
\end{equation}
We used $[Q_0,\Op_h(e^{NG})^{-1}]\Op_h(e^{NG})\in h\Psi^{-1}_h(\M)$.
We get (using $\int \psi= 1$)
\[
\|R_\psi(\lambda) P(1-Q_0)\|_{\mc{L}(\mc{H}_h^{NG})} \leq C_{\mc{O}}e^{-c_XN}\sup_{A\in\mathcal{O}}e^{-\Re \lambda(A)}+\mc{O}(h).
\]
Now, we observe that the operator $e^{-X_A} PQ_0$ is compact and smoothing (i.e. mapping $\mc{D}'(\mc{M})\to C^\infty(\mc{M})$), thus $R_\psi(\lambda) PQ_0$ is also compact and smoothing.
We have thus shown that there is $c_0>0$ and $c_1>0$ such that for all $m>0$, all $h>0$ small enough
and ${\rm Re} \lambda(A_0)>-c_1N$ and $|{\rm Im}(\lambda)|<h^{-1/2}$
\begin{equation}\label{boundsR^0R^1}
\begin{gathered}
R_{\psi}(\lambda)= R_{\psi}^0(\lambda)+R_{\psi}^1(\lambda) + R_\psi(\lambda) Q, \\
R_{\psi}^0(\lambda):=R_{\psi}(\lambda)P(1-Q_0), \quad
 R_{\psi}^1(\lambda):=R_{\psi}(\lambda)(1-P), \quad Q:= PQ_0 ,\\
\|R_{\psi}^0(\lambda)\|_{\mc{L}(\mc{H}_h^{NG})}\leq e^{-c_0N} , \quad R_\psi(\lambda) Q \textrm{ compact on }\mc{H}_h^{NG}
\end{gathered}
\end{equation}
To prove that $R_{\psi}(\la)$ is quasi-compact with $R_{\psi}(\lambda)(1-Q)-s$ is invertible on $\mc{H}_{h}^{NG}$ for $|s|>e^{-c_0N}$, ${\rm Re}(\la)(A_0)>-c_1N$, $|{\rm Im}(\la(A_0)|<h^{-1/2}$ and $h>0$ small enough, we have to show that the norm of $R_{\psi}^1(\lambda)$ is small.
This term is microlocalized in the region of ellipticity of the action. We use the smoothing\footnote{Here the smoothing  also happens in the semi-classical parameter in the sense that $R_\psi(\la)(1-P)$ will be 
$\mc{O}(h^m)$ for all $m>0$ due to the microlocalization in the ellpitic region.} effect in the direction of the flow: for each $A_{j}\in \W$, by integration by parts we obtain that
\[R_\psi(\lambda) (X_{A_j}+\lambda(A_j))=\int_{\W}(\pl_{A_j}\psi (A))e^{-X_A-\lambda(A)}dA=R_{\pl_{A_j}\psi}(\lambda).\]
This implies that 
\[ R_\psi(\lambda)\Delta_{\mathbb{A}}^0(\lambda)=R_{\Delta_{\mathbb{A}}^0(0)\psi}(\lambda)\]
where, for $A_1,\dots,A_\kappa\in \W$ a local basis of $\a$, we have set
\[
\Delta_{\mathbb{A}}^0(\lambda) :=-\sum_{j=1}^\kappa (\partial_{A_j}+\lambda(A_j))^2,\quad \Delta_{\mathbb{A}}(\lambda) := \tau_\ast \Delta_{\mathbb{A}}^0(\lambda)=-\sum_{j=1}^\kappa (X_{A_j}+\lambda(A_j))^2.
\]
(The first one acts on $\mathbb{A}$ while the second acts on $\M$.) Since we assumed that $|\Im \lambda|<h^{-1/2}$, $h^2\Delta_{\mathbb{A}}(\lambda)$ is elliptic on the wavefront set of $1-P$, uniformly in $\lambda$. We can thus find uniformly in $\lambda$ for each $m\in\N$ a parametrix $S(\lambda)\in \Psi^{-2m}_h(\M)$ so that
\[
(h^2\Delta_\mathbb{A} (\lambda))^mS(\lambda)(1-P)-(1-P) \in h^\infty \Psi_h^{-\infty}(\M)
\]
(actually, $S(\lambda)$ is a holomorphic function of $h\lambda$). We thus deduce that
\[
R_{\psi}(\lambda)(1-P)-h^{2m} \int_{\W} [\Delta_{\mathbb{A}}^0(0)^m\psi_\sigma](A)e^{-X_A-\lambda(A)}S(\lambda)(1-P)  \, dA \in h^\infty\Psi_h^{-\infty}(\M).
\]
(here the bound on the remainder does not depend on $\psi$ since $\| \psi\|_{L^1} =1$.) In particular, for each $m\geq 0$ there is $C_m>0$ depending continuously on $\| D^{2m}\psi\|_{L^1(\mathcal{O})}$ and $N$ such that for all $\la$ with ${\rm Re}(\la(A_0)>-c_1N$, $|\Im \lambda|<h^{-1/2}$ and all $h>0$ small
\begin{equation}\label{R(1-P)}
\|R_{\psi}(\lambda)(1-P)\|_{\mc{L}(\mc{H}_h^{NG})}\leq C_mh^{2m}.
\end{equation}
The same argument (using the operator $\Delta_{\mathbb{A}}(\lambda)$ acting on the other side) shows that for the
 same range or parameters 
 \begin{equation}\label{R(1-P)2}
\|(1-P)R_{\psi}(\lambda)\|_{\mc{L}(\mc{H}_h^{NG})}\leq C_mh^{2m}.
\end{equation}
Now, we observe that the operator $e^{-X_A} PQ_0$ is compact and smoothing.
We thus conclude that there is $c_0>0$ and $c_1>0$ such that for all $m>0$, all $h>0$ small enough
and ${\rm Re} \lambda(A_0)>-c_1N$ and $|{\rm Im}(\lambda)|<h^{-1/2}$
\begin{equation}\label{boundR^1}
 \|R_{\psi}^1(\lambda)\|_{\mc{L}(\mc{H}_h^{NG})}\leq C_mh^m.
\end{equation}
This shows that $R_{\psi}(\lambda)(1-Q)-s$ is invertible on $\mc{H}_{h}^{NG}$ for $|s|>e^{-c_0N}$ and $h>0$ small enough, locally uniformly in $\psi$ and $\lambda$. We call $T_\psi^Q(\lambda,s)=(R_{\psi}(\lambda)(1-Q)-s)^{-1}$ its inverse.\\

2) The second step is to prove the announced property of its wavefront set. We will assume that
$\psi_\sigma=\psi(\cdot-\sigma)$ is a family of $C_c^\infty(\mc{O})$ functions for some parameter $\sigma\in \a^*$ small. We thus have a family $T_{\psi_\sigma}^Q(\lambda,s)$ of operators depending on $(\lambda,s,\sigma)$ and we will assume it lives in a small compact set $K$ where $T_{\psi_\sigma}^Q(\lambda,s)$ is well-defined.

Let us start with the elliptic region. We observe that
\begin{equation}\label{eq:TQpsi}
\begin{split}
T^Q_{\psi_\sigma}(\lambda,s)= & - \frac{1}{s}\left(1 - \frac{R_{\psi_\sigma}(\lambda)(1-Q)}{s} \right)^{-1} \\
 = &-\frac{1}{s}  - \frac{R_{\psi_\sigma}(\lambda)(1-Q)}{s^2} -\frac{R_{\psi_\sigma}(\lambda)(1-Q)}{s^3}T^Q_{\psi_\sigma}(\lambda,s)R_{\psi_\sigma}(\lambda)(1-Q).
\end{split}
\end{equation}
Applying respectively  $(1-P)$ on the right and then on the left (for $P$ chosen as above with a $\Gamma_0'$ coming arbitrary close to $E_u^*\oplus E_s^*$) and using that $\WFh(1-Q)\subset \{|\xi|\geq r\}$, we obtain using \eqref{R(1-P)} and \eqref{R(1-P)2} that there is $r'\in (0,r)$ such that uniformly in $(\lambda,s,\sigma)\in K,$
\begin{equation}\label{eq:WFTQ}
\begin{split}
\WF_h(&\mc{K}_{T_{\psi_\sigma}^Q(\lambda,s)})\  \cap\  T^\ast (\M\times\M ) \subset \\
	&N^\ast \Delta\ \cup  \left\{(x,\xi,x',\xi')\ \middle|\ (x,\xi) \in E_u^*\oplus E_s^* \text{ and }(x',\xi') \in E_u^*\oplus E_s^*,\ \text{and }|\xi|,\ |\xi'|\geq  r' \right\}.
\end{split}
\end{equation}
Indeed, first one has ${\rm WF}_h(1/s)=N^\ast \Delta$, then,  using Egorov lemma, 
\[{\rm WF}_h(R_{\psi_\sigma}(\lambda)(1-Q))\subset \{ (x,\xi,x',\xi')\,\, | \,\, |\xi'|\geq r,  |\xi|\geq r'\}\]
for some $r'>0$ such that $e^{-X_A^H}(\{|\xi|\geq r\})\subset \{\xi\geq r'\}$ for all $A\in \supp(\psi)$. The result follows from composition of semiclassical wavefront sets.

It remains to consider the intersection of the wavefront set with $(E_u^\ast \oplus E_s^\ast)^2$. Expanding the formula for $T^Q_{\psi_\sigma}(\lambda,s)$, we get a convergent sum in $\mathcal{L}(\mathcal{H}^{NG}_h)$
\[
T^Q_{\psi_\sigma}(\lambda,s)= - \frac{1}{s} \sum_{k\geq 0} \left(\frac{R_{\psi_\sigma}(\lambda)(1-Q)}{s} \right)^k .
\]
This is the inspiration for the wavefront set statement. However, since the terms in the sum are not increasingly smoothing, only smaller and smaller, we cannot directly obtain the desired statement.

Let us rewrite exactly what we need to prove. For each $B,B' \in \Psi_h^{\rm \comp}(\mc{M})$  microsupported near $(E^\ast_u \oplus E^\ast_s)\cap \{ |\xi|\geq  r'\}$, so that
$\WFh(B')\cap \Omega_B=\emptyset$ with
\[
\Omega_B :=\left\{ (x,\xi)\ \middle|\ e^{X^H_A}(x,\xi) \in \WF_h(B),\ A\in \cup_{k\geq 0} k\mathcal{O}\right\},
\]
it suffices to prove that uniformly for $(\lambda,s,\sigma)\in K$
\begin{equation}\label{esttoprove}
\|B T^Q_{\psi_\sigma}(\lambda,s) B'\|_{\mc{L}(L^2)} = \mathcal{O}(h^\infty)
\end{equation}
Notice that in \eqref{esttoprove} one can use $\mc{H}_h^{NG}$ norms as well  since $\|B\|_{\mc{L}(\mc{H}^{NG}_h,L^2)}+\|B\|_{\mc{L}(L^2,\mc{H}^{NG}_h)}$ (and same for $B'$) is bounded by some $h^{-CN}$ for some uniform $C>0$, as $B,B$ have compact semiclassical wavefront sets.
Note also that the conormal region $N^*\Delta$ is covered because we now include $k=0$ in the definition of $\Omega_B$.

It will be convenient below to use that for $|s|>e^{-c_0N}$, if $u=T_{\psi_\sigma}^Q(\lambda,s)f$ one has $u=(-f+R_{\psi_\sigma}^0(\lambda)u+R_{\psi_\sigma}^1(\lambda)u)/s$ (recall \eqref{boundsR^0R^1}). Thus for each $B\in \Psi_h^{0}(\mc{M})$ with $|\sigma(B)|\leq 1$, we have uniformly for $(\lambda,s,\sigma)\in K$
\begin{equation}\label{estimateB0}
\|Bu\|_{\mc{H}_h^{NG}}\leq |s|^{-1}(\|Bf\|_{\mc{H}_h^{NG}}+\|BR_{\psi_\sigma}^0(\lambda)u\|_{\mc{H}_h^{NG}})+\mc{O}(h^\infty).\end{equation}
Here, for bounding $\|BR_{\psi_\sigma}^1(\lambda)u\|_{\mc{H}_h^{NG}}$,
 we have used the bound \eqref{boundR^1} and the fact that $T_{\psi_\sigma}^Q(\lambda,s)$ has norm uniformly bounded with respect to $(\lambda,s,\sigma,h)\in K\times (0,h_0)$ for $h_0>0$ fixed small.

Our strategy to prove \eqref{esttoprove} decomposes into 3 steps. First, we shall estimate the norm $\|B_1u\|_{\mc{H}_h^{NG}}$ of the microlocalized term $B_1u$ for $B_1={\rm Op}_h(b_1)$ with $b_1$ a well-chosen function that is $=1$ near $E_s^*\cap \pl \bbar{T}^*\mc{M}$ (which is a source for the symplectic lift $e^{X_A^H}$ of the flow $e^{X_A}$) in terms of $\|B_1f\|_{\mc{H}_{h}^{NG}}$. 
This is reminiscent of the radial source estimate in \cite{DZ16a} and is a consequence of the smallness of the propagator 
$e^{-X_A}$ on the anisotropic space $\mc{H}_h^{NG}$, given in \eqref{firstboundeXA} and \eqref{boundsR^0R^1}. Second, we use the propagation of regularity (a consequence of Egorov lemma) to control the microlocal norm of $u$ in the regions of $\bbar{T}^*\mc{M}$ 
which can be accessed by the flow $e^{X_{kA}^H}$ (for $k\geq 1$ and $A\in \mc{O}$) from $\supp(b_1)$ where we already have microlocal 
regularity. This will show \eqref{esttoprove} when ${\rm WF}_h(B)\cap E_u^*=\emptyset$.  There remains to analyse the case when ${\rm WF}_h(B)$ is localized in a small ball around a point in $E_u^*$: 
the idea is that if ${\rm WF}_h(B')$  is compact, there is $k\geq 1$ large enough so that $e^{-X_{kA}^H}({\rm WF}_h(B))$ (for $A\in \mc{O}$) is going to be contained in a small neighbourhood of $\xi=0$ by the property of $e^{X_A^H}$ on $E_u^*$, and thus can not intersect ${\rm WF}_h(B')$.

First we will need a so-called \emph{source estimate} close to $E_s^*\cap \pl\bbar{T}^*\M$ in order
to control the wavefront set in that region.

\textbf{(i) The source estimate.}
Let $V_s\subset \bbar{T}^*\M$ be a small neighborhood of $L:=E_s^*\cap \pl\bbar{T}^*\M$ so that $e^{-X^H_A}(V_s)\subset V_s$ for all $A\in \mc{O}$, $m(x,\xi)$ is the function of \eqref{escapefct}, which is chosen to be constant in $V_s$ and finally $P=1$ microlocally on $V_s$.
Now using the fact that $L$ is a repulsor (a source) for the flow $e^{X_A^H}$ with $A\in\mc{O} $, there is $b_1,b_2\in S^0(T^*\M)$ with $b_1=1$ near $L$, $\supp(b_2)\subset \{b_1=1\}$ so that
$b_2\circ e^{-X_A^H}=1$ on ${\rm supp}(b_1)$ for all $A\in \mc{O}$. See Figure \ref{fig:phase_space}. Notice that for $v\in \mathcal{H}^{NG}_{h}$ and $B_{j}=\Op_h(b_{j})$ for $j=1,2$, one has
\[
\| B_2 v\|_{\mc{H}_h^{NG}} \leq \| B_1 B_2 v\|_{\mc{H}_h^{NG}} + \mathcal{O}(h^\infty\| v\|_{\mc{H}_h^{NG}}).
\]
By Egorov's Lemma, $e^{X_A}B_1e^{-X_A}$ satisfies  ${\rm WF}_h(e^{X_A}B_1e^{X_A})\subset e^{-X_A^H}({\rm WF}_h(B_1))$ which is contained in the set $\{b_2=1\}$ if $A\in \mc{O}$, thus one obtains $B_1 R_{\psi_\sigma}^0(\lambda)-B_1 R_{\psi_\sigma}^0(\lambda) B_2\in h^\infty \Psi_h^{-\infty}(\M)$. The remainder here depends continuously on $\sigma,\lambda$ as before. Thus, using \eqref{boundsR^0R^1} we get for all $v\in C^\infty(\M)$
\begin{align*}
 \|B_1 R_{\psi}^0(\lambda)v\|_{\mc{H}_h^{NG}}	&\leq  \|B_1 R_{\psi}^0(\lambda) B_2 v\|_{\mc{H}_h^{NG}} +\mc{O}(h^\infty\|v\|_{\mc{H}_h^{NG}}),\\
		&\leq e^{-c_0N}\|B_2 v\|_{\mc{H}_h^{NG}}+ \mc{O}(h^\infty\|v\|_{\mc{H}_h^{NG}}),\\
		&\leq e^{-c_0N}\|B_1 v\|_{\mc{H}_h^{NG}}+ \mc{O}(h^\infty\|v\|_{\mc{H}_h^{NG}}).
\end{align*}
Combining with \eqref{estimateB0}, and assuming $e^{-c_0N}|s|^{-1}\leq 1/2$, this yields that there is $C>0$ such that for all $f \in C^\infty(\M)$
\begin{equation}\label{radialsource}
\|B_1u\|_{\mc{H}_h^{NG}}=\| B_1 T_{\psi_\sigma}^Q(\lambda,s) f\|_{\mc{H}_h^{NG}} \leq C \| B_1 f \|_{\mc{H}^{NG}_h} + \mathcal{O}(h^\infty \|f\|_{\mc{H}_h^{NG}})
\end{equation}
uniformly for $(\lambda,s,\sigma)\in K$. This estimate is somehow related to \cite[Proposition 2.6]{DZ16a} but is slightly different (and actually simpler) in that we work here with propagators $e^{-X_A}$ rather than the vector field generating the flow.\\

\begin{figure}
\includegraphics[scale=0.3]{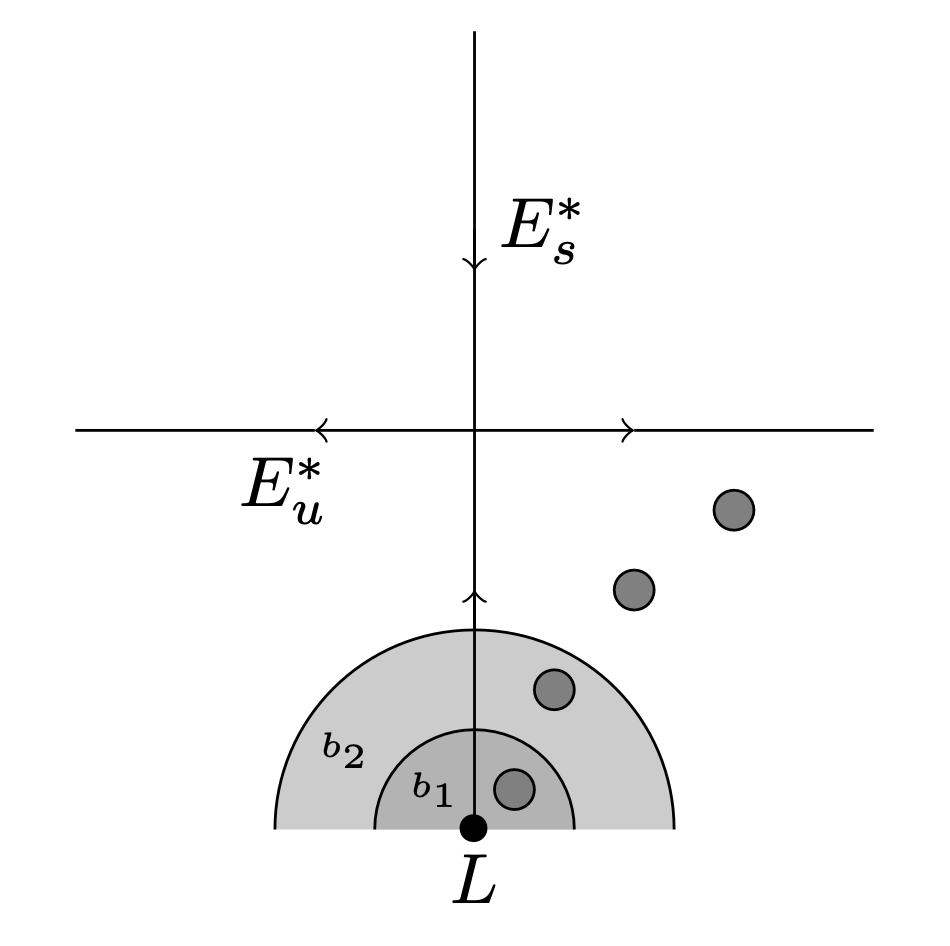}
\caption{The support of $b_1$ and $b_2$ near $L$ in the compactified $\bbar{T}^*\mc{M}$, as defined in (i). The darker disks represent $e^{-X_A^H}({\rm WF}_h(B))$ for $A\in \ell \mc{O}$ for $\ell:=0,\dots,k$, with the last one $\ell=k$ being contained in the region $\{b_1=1\}$ as explained in (ii).}
\label{fig:phase_space}
\end{figure}

(ii) \textbf{Propagation estimate outside $E_u^*$.} We will next show how to use the source estimate to obtain information on the wavefront set of $\mc{K}_{T_{\psi_\sigma}(\lambda,s)}$ outside $E_u^*$.

Assume that $B\in \Psi_h^{\rm comp}(\M)$ satisfies $\WFh(B)\cap E_u^*=\emptyset$, $0\leq \sigma(B)\leq 1$ and $\WFh(B)$
contained in a small neighborhood of $E_u^*\oplus E_s^*$ and $B'\in \Psi_h^{\rm comp}(\mc{M})$ satisfying $\WFh(B')\cap \Omega_B=\emptyset$. Since $L$ is an repulsor, there is $k\in \N$ large enough such that for all $A\in k\mc{O}$, $e^{-X_A^H}(\WFh(B))\subset V_s$ and $\WFh(B')\cap V_s=\emptyset$ for some small enough neighborhood $V_s$ of $L$, invariant by $e^{-X_A^H}$ as in (i). We will use $B_1$ as in (i) for this set $V_s$ and we can assume, up to taking $k$ even larger, that $e^{-X_A^H}(\WFh(B))\subset \supp(b_1)$ for all $A\in k\mc{O}$. See Figure \ref{fig:phase_space}.

Iterating \eqref{estimateB0}, we obtain for each $k\geq 1$ and each $f\in C^\infty(\mc{M})$
(with $u=T_{\psi_\sigma}^Q(\lambda,s) f$)
\begin{equation}\label{iterating}
\|Bu\|_{\mc{H}_h^{NG}}\leq (\sum_{j=0}^{k-1}
|s|^{-j-1}\|B(R_{\psi_\sigma}^0(\lambda))^jf\|_{\mc{H}_h^{NG}}+|s|^{-k}\|B(R_{\psi_\sigma}^0(\lambda))^ku\|_{\mc{H}_h^{NG}})+\mc{O}(h^\infty\|f\|_{\mc{H}_h^{NG}}).
\end{equation}
 By Egorov lemma (just as above), locally uniformly in $(\lambda,s,\sigma)$
\[\WFh(B(R_{\psi_\sigma}^0(\lambda))^jf)\subset \WFh(B)\cap \bigcup_{A\in j\mc{O}} e^{X_A^H}(\WFh(f)).\]
Again by Egorov, we have
$B(R_{\psi_\sigma}^0(\lambda))^k-B(R_{\psi_\sigma}^0(\lambda))^kB_1\in h^\infty \Psi_h^\infty(\M)$ thus by \eqref{radialsource}
\[ \|BT_{\psi_\sigma}^Q(\lambda,s) f\|_{\mc{H}_h^{NG}}\leq C(\sum_{j=0}^{k-1}
|s|^{-j-1}\|B(R_{\psi_\sigma}^0(\lambda))^jf\|_{\mc{H}_h^{NG}}+|s|^{-k}\|B_1f\|_{\mc{H}_h^{NG}})+\mc{O}(h^\infty\|f\|_{\mc{H}_h^{NG}}).\]
with $C>0$ and the remainder being locally uniform in $(\lambda, s,\sigma)$.
Applying this with $B'f$ instead of $f$ and using $B_1B'\in h^\infty\Psi_h^{-\infty}(\mc{M})$ and
$B(R_{\psi_\sigma}^0(\lambda))^jB'\in h^\infty\Psi_h^{-\infty}(\mc{M})$ for $j\in [0,k]$ under our assumptions on $B,B'$, we obtain
\eqref{esttoprove} uniformly in $(\lambda, s,\sigma)\in K$.\\

\textbf{(iii) Estimate near $E_u^*$.} As we already have the estimate \eqref{eq:WFTQ} on $\WF(\mc K_{T^Q_{\psi_\sigma}})$ there remains to study the case where $\WFh(B)$ and $\WFh(B')$ are intersecting $E_u^*$. By Egorov and the fact that $u:=T_{\psi_\sigma}^Q(\lambda,s)B'f$ has $\WFh(u)$ contained in $\{|\xi|\geq r'\}$ (which can be read off \eqref{eq:TQpsi}), we obtain that
$\WFh(R_{\psi_\sigma}^0(\lambda)u)\subset \cup_{A\in\mc{O}}e^{X_A^H}(\WFh(u))$ is contained
in $\{|\xi|\geq r'/C_0\}$ for some $C_0>0$.
We can assume that $\WFh(B)$ is a small neighborhood of a point $(x,\xi)\in E_u^*\cap \{|\xi|\geq r'\}$, so that for $k\geq 2$ large enough $e^{-X_A^H}(\WFh(B))\in \{|\xi|\leq r'/2C_0\}$ for all $A\in (k-1)\mc{O}$.  We then obtain by Egorov theorem  that $\|B(R_{\psi_\sigma}^0(\lambda))^{k-1}R_{\psi_\sigma}^0(\lambda)u\|_{\mc{H}_h^{NG}}=\mc{O}(h^\infty)$ uniformly in $(\lambda,s,\sigma)\in K$.
The estimate \eqref{iterating} still holds and we deduce that uniformly for $(\lambda,s,\sigma)\in K$
\[\|BT_{\psi_\sigma}^Q(\lambda,s)B'f\|_{\mc{H}_h^{NG}}\leq \sum_{j=0}^{k-1}
|s|^{-j-1}\|B(R_{\psi_\sigma}^0(\lambda))^jB'f\|_{\mc{H}_h^{NG}}+\mc{O}(h^\infty\|f\|_{\mc{H}_h^{NG}}).\]
Assuming that $\WFh(B')\cap \Omega_B=\emptyset$, the right hand side is an $\mc{O}(h^\infty)$ uniformly for $(\lambda,s,\sigma)\in K$ by Egorov theorem again. We conclude that \eqref{WFtoprove} holds.

\subsection{Proof of Theorem \ref{thm:formuledebowenintro}}

Since our analysis is based on the use of the operators $R_\psi(\lambda)$, which are a kind of Laplace transform, it will be convenient to introduce the notation
\[
\hat{\psi}(\lambda) : = \int_{\a} e^{\lambda(A)} \psi(A)dA, \ \lambda\in \a_\C^*.
\]
(formally, $R_\psi(\lambda) = \hat{\psi}(X+\lambda)$). Using the analytic continuation of the function $Z_{f,\psi}$, we can obtain the following result
\begin{prop}\label{prop:raw-convergence-result}
Let $f\in C^\infty(\mc{M})$, and let $\psi\in C_c^\infty(\W;\R^+)$ with $\int_{\W}\psi =1$ and support in a small ball. For real $\lambda \in \a^*\subset \a^*_\C$,
we denote $\delta = \log \hat{\psi}(\lambda)$. We have for some $\epsilon>0$ that as $k\to \infty$
\[
\sum_{T\in \mathcal{T}}\sum_{A\in \W\cap L(T)} \psi^{*k}(A)e^{-\lambda(A)} \frac{ \int_T f\,d\lambda_T }{|\det (1-\mathcal{P}_A)|} = \mu(f)  e^{\delta k} + \mathcal{O}(e^{(\delta-\epsilon)k}).
\]
Additionally, if $\psi = \psi_\sigma$, the bound on the remainder is locally uniform with respect to $\sigma$.
\end{prop}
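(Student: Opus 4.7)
The plan is to recast the left-hand side as a flat trace via Guillemin's formula \eqref{trafR^k}, namely as $\mathrm{Tr}^\flat(f R_\psi(\lambda)^k)$, and to extract its leading asymptotics from the meromorphic function $Z_{\psi,f}(s,\lambda)$ of Proposition~\ref{thm:Zf} by a Cauchy-integral/contour-deformation argument.

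First, I would perform a spectral rescaling: the weight
\[
\tilde\psi(A):=e^{-\delta-\lambda(A)}\psi(A)
\]
still lies in $C_c^\infty(\W;\R^+)$ with $\int\tilde\psi=1$, and $R_\psi(\lambda)=e^\delta R_{\tilde\psi}$. The quasi-compactness argument of \cite{BGHW20} underlying Proposition~\ref{limiteR^k} relies only on positivity and unit mass of the weight, not on its product structure, so it applies verbatim to $R_{\tilde\psi}$: on $\mathcal H^{NG}$, the essential spectrum of $R_{\tilde\psi}$ lies in $D(0,1/2)$, and $1$ is its unique peripheral eigenvalue with the (same) spectral projector $\Pi$ of \eqref{Pisum}, characterized intrinsically by \eqref{RanPi}, without Jordan block. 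Consequently there is $r_0<1$ such that $\mathrm{Spec}(R_\psi(\lambda))\cap\{|s|>r_0 e^\delta\}=\{e^\delta\}$ and
\[
R_\psi(\lambda)\bigl(s-R_\psi(\lambda)\bigr)^{-1} \;=\; \frac{e^\delta\,\Pi}{s-e^\delta}+H(s)
\]
holds near $s=e^\delta$ with $H$ holomorphic in $\mathcal{L}(\mathcal H^{NG})$.

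Next, combining \eqref{series_resolvante} with Cauchy's formula, for any $R>e^\delta$,
\[
\mathrm{Tr}^\flat\bigl(f R_\psi(\lambda)^k\bigr) \;=\; \frac{1}{2\pi i}\oint_{|s|=R} s^{k-1}\,Z_{\psi,f}(s,\lambda)\,ds.
\]
By the previous step $Z_{\psi,f}(\cdot,\lambda)$ is holomorphic on $\{|s|>r_0 e^\delta\}\setminus\{e^\delta\}$ with a simple pole at $s=e^\delta$ of residue $e^\delta\,\mathrm{Tr}^\flat(f\Pi)=e^\delta\,\mu(f)$, the last equality using transitivity (under which $\Pi=\mathbf 1\otimes\mu$ so that $\mathrm{Tr}^\flat(f\Pi)=\mu(f)$). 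Deforming the contour to $|s|=r'\in(r_0 e^\delta,\,e^\delta)$ picks up this residue and leaves a remainder bounded by $(r')^k\sup_{|s|=r'}|Z_{\psi,f}(s,\lambda)|$, which is $\mathcal O(e^{(\delta-\epsilon)k})$ for $\epsilon:=\log(e^\delta/r')>0$.

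For the uniformity in $\sigma$: the shift $\psi\mapsto\psi_\sigma$ changes $\delta$ by the smooth quantity $-\lambda(\sigma)$ and, by the continuity in $\sigma$ of the Schwartz kernel of $R_{\psi_\sigma}(\lambda)(R_{\psi_\sigma}(\lambda)-s)^{-1}$ in a wave-front-set topology compatible with the flat trace (Propositions~\ref{prop:WF-shifted-resolvent} and~\ref{thm:Zf}), the map $(\sigma,s)\mapsto Z_{\psi_\sigma,f}(s,\lambda)$ is jointly continuous on the compact contour $|s|=r'$, which yields the locally uniform bound. The main obstacle is the extension of the peripheral-spectrum statement of Proposition~\ref{limiteR^k} from the product weight \eqref{eq:Eq-R} to the positive weight $\tilde\psi$ of unit mass; once this is granted, the absence of any other poles of $Z_{\psi,f}(\cdot,\lambda)$ in the annulus $r_0 e^\delta<|s|<e^\delta$ makes the contour deformation routine.
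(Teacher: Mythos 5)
Your architecture coincides with the paper's: write the left-hand side as $\Tr^\flat(fR_\psi(\lambda)^k)$, i.e.\ the $k$-th Laurent coefficient of $Z_{f,\psi}(\cdot,\lambda)$, and extract the asymptotics by Cauchy's formula from a simple dominating pole at $s=e^{\delta}$ whose residue is $e^{\delta}\mu(f)$; the contour deformation, the $\mathcal{O}((r')^k)$ remainder, and the continuity in $\sigma$ via Propositions \ref{prop:WF-shifted-resolvent} and \ref{thm:Zf} are all handled as in the paper. The gap is at the central step. You obtain the pole-free annulus $\{r_0e^{\delta}<|s|\}\setminus\{e^{\delta}\}$ by asserting that Proposition \ref{limiteR^k} ``applies verbatim'' to the rescaled weight $\tilde\psi=e^{-\delta-\lambda(\cdot)}\psi$, and you yourself then call this ``the main obstacle\dots once this is granted''. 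But Proposition \ref{limiteR^k} is stated (and proved in \cite{BGHW20}) for the product weight \eqref{eq:Eq-R}, and the fact you need --- that $1$ is the only spectral point of $R_{\tilde\psi}$ on $\mc{H}^{NG}$ outside a disk of radius $<1$, with spectral projector $\Pi$ and no Jordan block --- is precisely the content of the paper's proof of this proposition, so your reduction either begs the question or leaves the decisive estimate unproved. In particular, ``positivity and unit mass'' alone do not give the peripheral-spectrum statement without an extra argument, since on anisotropic spaces the usual positivity tricks for transfer operators are not directly available.

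The paper closes exactly this hole as follows: since $R_\psi(\lambda)$ commutes with the action, the finite-dimensional characteristic space of any pole $s_0$ of $(s-R_\psi(\lambda))^{-1}$ is $X$-invariant and hence contains a joint eigenvector, so $s_0=\hat{\psi}(\lambda-\la_0)$ for some Ruelle--Taylor resonance $\la_0$ (and conversely); then the a priori bound $\Re(t\cdot\la_0)\le 0$ for $t\in\W$ from \cite{BGHW20}, together with the positivity of $\psi$ and the fact that its support has nonempty interior, yields the strict inequality $|\hat{\psi}(\lambda-\la_0)|<\hat{\psi}(\lambda)$ for real $\lambda$ and $\la_0\neq 0$, which is the spectral gap you postulated; finally the absence of Jordan blocks at $\la_0=0$ is \cite[Proposition 5.4]{BGHW20}, giving a simple pole with residue $e^{\delta}\Tr^\flat(fR_\psi(\lambda)\Pi)=e^{\delta}\mu(f)$ (your appeal to transitivity for $\Pi=\mathbf 1\otimes\mu$ is consistent with the setting of Theorem \ref{formuledebowen}). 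If you insert this resonance-characterization argument --- which works directly for any positive unit-mass $\psi\in C_c^\infty(\W)$, so the rescaling to $\tilde\psi$ becomes unnecessary --- the rest of your proof is correct and essentially identical to the paper's.
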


\begin{proof}
By Proposition \ref{Zorbitesperiod}, the terms we want to estimate are the coefficients $c_k$ in the expansion
\[
Z_{f,\psi}(s,\lambda) = \sum_{k\geq 0} s^{-k} c_k.
\]
Since $Z_{f,\psi}$ is meromorphic in the $s$ variable in $\C^\ast$, according to Cauchy's formula, we have for every $\rho\in \R^+$ such that $Z_{f,\psi}(\cdot, \lambda)$ has no poles on the circle of radius $\rho$
\[
c_k = \frac{1}{2i\pi} \int_{\rho \mathbb{S}^1} Z_{f,\psi}(s,\lambda)s^{k-1} ds + \sum_{\text{poles of modulus }> \rho} \Res( Z_{f,\psi}(s,\lambda)s^{k-1} ds).
\]
Let us assume that $Z_{f,\psi}(\cdot,\lambda)$ has a simple pole $s_0$ with modulus strictly larger than all the other poles, and $s_0$ is real. We denote by $K$ the residue of $Z_{f,\psi}(\cdot,\lambda)$ at $s_0$, and we find for some $\epsilon>0$
\[
c_k = K s_0^{k-1} + \mathcal{O}( (e^{-\epsilon}s_0)^{k-1}).
\]
For this reason, we will be done if we can prove that $e^\delta$ is indeed a real dominating pole of order $1$ (when $\lambda$ is real), with residue
\[
K = \mu(f) e^\delta.
\]
Let us describe the poles of $Z_{f,\psi}(\lambda,\cdot)$. As we have seen before, they are exactly the poles of the resolvent $(s-R_\psi(\lambda))^{-1}$. Let us investigate the structure of these poles. Let $s_0\in \C^\ast$ be a pole of $s\mapsto (s-R_\psi(\lambda))^{-1}$, and denote by $\Pi(\lambda,s_0)$ the corresponding spectral projector, so that near $s_0$,
\[
(s-R_\psi(\lambda))^{-1} = \sum_{j\geq 0} \frac{ (R_\psi(\lambda)-s_0)^j \Pi(\lambda,s_0) }{(s - s_0)^{j+1}} + \text{ holomorphic }.
\]
Since $s_0-R_\psi(\lambda)$ is Fredholm on some suitable anisotropic space, there is a finite dimensional 
generalized eigenspace $E(s_0)$ such that $(s_0-R_\psi(\lambda))^{N_0}=0$ on $E(s_0)$ for some $N_0\geq 1$, and since $R_\psi(\lambda)$ commutes with the Anosov action, we can split $E(s_0)$ into a sum of
joint generalized eigenspaces for the action. We can thus find in $E(s_0)$ a non-zero vector $u$ such that $-X_Au = \la_0(A) u$ for some $\la_0 \in \a_\C^*$ and all $A\in \W$. Since the generalized eigenspace is contained in some suitable anisotropic space, we deduce that $\la_0 \in \Res(X)$ where ${\rm Res}(X)$ is the set of Ruelle-Taylor resonance of \cite{BGHW20} (and introduced in \eqref{def:RTresonance}). It follows that
\[
R_\psi(\lambda) u = \int_{\W} e^{-X_A-\lambda(A)} \psi(A) u \, dA = \hat{\psi}(\lambda-\la_0) u,
\]
so that $s_0 = \hat{\psi}(\lambda-\la_0)$. The converse argument completes the proof of the fact that $\ran \Pi(\lambda, s_0)$ is exactly equal to
\[
\left\{ u\ \middle|\ \exists \la_0 \in \Res(X),\ \hat{\psi}(\lambda-\la_0) = s_0,\ \text{ $u$ is a generalized resonant state at $\la_0$} \right\}.
\]

Now, we know from \cite[Theorem 2]{BGHW20} that for all resonances $\la_0$, and all $A\in \W$, $\Re (\la_0(A))\leq 0$, with equality for $\la_0 = 0$ (if the action is not mixing, there may be other purely imaginary resonances). Now, since we have chosen $\psi$ to be real non-negative, we have
\[
\left|\int_{\W} e^{\la_0(A)} \psi(A) dA\right| \leq \int_{\W} e^{\Re (\la_0(A))} \psi(A) dt.
\]
With equality if and only if $e^{\la_0(A) - \Re (\la_0(A))}$ is constant on the support of $\psi$.
It follows that for $\lambda$ real, and $\la_0 \in \Res(X)$ with $\la_0\neq 0$,
\[
\left|\hat\psi(\lambda-\lambda_0)\right| = \left| \int_{\W} e^{(\la_0-\lambda)(A)} \psi(A) dA \right| < \int_{\W} e^{-\lambda(A)}\psi(A) dA = \hat \psi(\lambda).
\]
This implies that $e^{\delta}=\hat{\psi}(\lambda)$ is indeed a dominating pole. It remains to compute its order and residue. However from \cite[Proposition 5.4]{BGHW20}, we know that there are no Jordan blocks at $0$, so that the $0$-characteristic space is equal to the $0$-eigenspace, and that the corresponding spectral projector $\Pi(\lambda, e^\delta)$ does not depend on the choices -- it was denoted $\Pi(0)$ in \cite{BGHW20}. We deduce that indeed $e^\delta$ is a simple pole, with residue
\[
\Tr  f R_\psi(\lambda) \Pi(0)  = e^\delta \mu(f). \qedhere
\]
\end{proof}

As a corollary of the proof, we find
\begin{cor}\label{cor:poles-Z_1}
The function $Z_{1,\psi}(\cdot, \lambda)$ has simple poles, and
\[
\Res(Z_{1,\psi}(s,\lambda)ds, s_0) = s_0\sum_{\zeta\in\Res(X),\hat{\psi}(\lambda-\zeta) = s_0} \dim\{u\in C^{-\infty}_{E^u_\ast} \ |\ \exists \ell>0,\ (-X-\zeta)^\ell u = 0\ \}.
\]
\end{cor}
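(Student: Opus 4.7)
The plan is to read off the residue of $Z_{1,\psi}(\lambda,\cdot)$ at a pole $s_0$ from the finite-dimensional spectral structure of $R_\psi(\lambda)$ on its characteristic subspace, which was already identified in the proof of Proposition~\ref{prop:raw-convergence-result}. A priori the Laurent expansion of $R_\psi(\lambda)(s-R_\psi(\lambda))^{-1}$ near $s_0$ has a pole of order equal to the nilpotency index of $N:=(R_\psi(\lambda)-s_0)|_{E(s_0)}$, where $E(s_0):=\ran\Pi(\lambda,s_0)$; the point is that after applying the flat trace all principal-part coefficients but the simple one will vanish because they are traces of nilpotent finite-rank operators.

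First I would recall from the proof of Proposition~\ref{prop:raw-convergence-result} that the poles of $s\mapsto (s-R_\psi(\lambda))^{-1}$ on $\mathcal{H}^{NG}$ are exactly the $s_0=\hat\psi(\lambda-\zeta)$ for $\zeta\in\Res(X)$, and that
\[
E(s_0)\;=\;\bigoplus_{\zeta\in\Res(X):\,\hat\psi(\lambda-\zeta)=s_0}E(\zeta),\qquad E(\zeta):=\bigl\{u\in C^{-\infty}_{E_u^*}\ \bigl|\ (-X-\zeta)^\ell u=0\ \text{for some }\ell\geq 1\bigr\},
\]
with $R_\psi(\lambda)=s_0\,\mathrm{Id}+N$ on $E(s_0)$ and $N$ nilpotent.

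Next I would write the Laurent expansion
\[
R_\psi(\lambda)(s-R_\psi(\lambda))^{-1}\;=\;\sum_{j\geq 0}\frac{R_\psi(\lambda)\,N^j\,\Pi(\lambda,s_0)}{(s-s_0)^{j+1}}\;+\;\mathcal{O}(1)
\]
and take flat traces term by term. By Proposition~\ref{prop:WF-shifted-resolvent} the Schwartz kernel of $T_{\psi_\sigma}(\lambda,s)$ has wave-front set in a cone $\Gamma$ avoiding $N^*\Delta$, so every Laurent coefficient is a finite-rank operator whose kernel inherits this property; consequently its flat trace equals the ordinary (basis-independent) trace on the finite-dimensional space $E(s_0)$. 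Using $R_\psi(\lambda)N^j=s_0 N^j+N^{j+1}$ on $E(s_0)$, for $j\geq 1$ both summands are nilpotent and hence traceless, which simultaneously proves that $s_0$ is a simple pole. For $j=0$ the trace is $\Tr\bigl((s_0\,\mathrm{Id}+N)|_{E(s_0)}\bigr)=s_0\dim E(s_0)$, and summing $\dim E(s_0)=\sum_\zeta\dim E(\zeta)$ over the decomposition above yields the announced residue.

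The hard part will be justifying the reduction of the flat trace to an ordinary matrix trace on $E(s_0)$. The cleanest route is to write the kernel of $\Pi(\lambda,s_0)$ in the form $\sum_i v_i\otimes\omega_i^*$ with $(v_i)_i$ a basis of $E(s_0)\subset C^{-\infty}_{E_u^*}$ and dual functionals $\omega_i^*\in C^{-\infty}_{E_s^*}$; this is obtained by applying to the Fredholm operator $R_\psi(\lambda)-s_0$ on $\mathcal{H}^{NG}$ (and its adjoint on $(\mathcal{H}^{NG})^*$) the same duality argument that produced \eqref{Pisum} in the case $\lambda=0$, $s_0=1$. The wave-front separation then makes every pairing $\langle\omega_i^*,R_\psi(\lambda) N^j v_i\rangle$ well-defined, the flat trace of $R_\psi(\lambda)N^j\Pi(\lambda,s_0)$ equals $\sum_i\langle\omega_i^*,R_\psi(\lambda) N^j v_i\rangle$, and this is precisely the finite-dimensional trace computed in any Jordan basis of $R_\psi(\lambda)|_{E(s_0)}$. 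Once this identification is in place, the vanishing of the higher principal-part coefficients and the value of the residue are purely linear-algebraic.
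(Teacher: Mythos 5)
Your argument is correct and follows essentially the same route as the paper: expand $R_\psi(\lambda)(s-R_\psi(\lambda))^{-1}$ in a Laurent series at $s_0$, note that the coefficients with $j\geq 1$ are nilpotent on the characteristic space and hence traceless, compute the $j=0$ trace as $s_0\dim\ran\Pi(\lambda,s_0)$, and invoke the identification of $\ran\Pi(\lambda,s_0)$ with the direct sum of generalized resonant spaces established in the proof of Proposition~\ref{prop:raw-convergence-result}. Your explicit justification that the flat trace of these finite-rank operators (with kernels of the form $\sum_i v_i\otimes\omega_i^*$, $\WF(v_i)\subset E_u^*$, $\WF(\omega_i^*)\subset E_s^*$) coincides with the ordinary trace is a welcome elaboration of a step the paper leaves implicit, but it is not a different method.
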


\begin{proof}
Near a pole $s_0$ of $(s-R_\psi(\la))^{-1}$, we come back to the formula
\[
R_\psi(\lambda)(s-R_\psi(\lambda))^{-1} = \sum_{j\geq 0} \frac{ R_\psi(\lambda)(R_\psi(\lambda)-s_0)^j \Pi_{s_0}}{(s-s_0)^{j+1}} + \text{ holomorphic around $s_0$.}
\]
Since the trace of nilpotent operators is always $0$, taking the trace eliminates the terms $j>0$, and we deduce that around $s_0$,
\[
Z_{1,\psi}(s,\lambda) = \frac{s_0}{s-s_0} \Tr \Pi_{s_0} + \text{ holomorphic around $s_0$.}
\]
We have already seen that the range of $\Pi_{s_0}$ is the direct sum of the spaces
\[
\{u\in C^{-\infty}_{E^u_\ast} \ |\ \exists \ell>0,\ (-X-\zeta)^\ell u = 0\ \},
\]
for $\hat{\psi}(\lambda-\zeta) = s_0$.
\end{proof}

In the case that $f\neq 1$, the singularities could become more complicated because $f$ could interact with the Jordan blocks of $R$. We can now turn to the proof of Theorem \ref{thm:formuledebowenintro}:
\begin{proof}[Proof of Theorem \ref{thm:formuledebowenintro}] 
We want to prove the convergence for any $f\in C^0(\mc M)$, but it suffices to prove the statement for $f\geq 0$ and $f\in C^\infty(\mc M)$ (first we can write $f = f_1-f_2+if_3-if_4$ with positive continuouse functions $f_i$ and then approximate each of the function by a positive smooth function in $\|\cdot \|_\infty$ norm). We will use some estimates from the proof of \cite[Proposition 5.4]{BGHW20}.
Let $\nu\in \mc{D}'(\W)$ be the measure\footnote{$\mc{D}'(\mc{W})$ denotes the space of distributions on the open cone $\mc{W}$.}
\[\nu=\sum_{T\in \mc{T}}
\sum_{A\in \W\cap L(T)}\frac{\int_{T}f\,d\lambda_T}{|\det(1-\mc{P}_{A})|}\delta_A\]
where $\delta_A$ is the Dirac mass at $A$.
Choose a basis $(A_j)_{j=1}^\kappa$ of $\a$ so that $A_j\in \ker \eta$ for $j\geq 2$ and $\eta(A_1)=1$. We then identify $\a\simeq \R^\kappa$ by identifying the canonical basis of $\R^\kappa$ with the basis $(A_j)_j$. We let $\mc{C}$ be a proper subcone of the Weyl chamber $\mc{W}$ and 
$\Sigma=\mc{C}\cap \{A_1+\sum_{j=2}^\kappa t_jA_j, |\, t_j\in \R\}$ be a hyperplane section of the cone $\mc{C}$.
Choose $r>0$ smaller than the distance of $\Sigma$ to the boundary $\pl \mc{W}$ of the Weyl chamber (where the distance is the Euclidean distance in the chosen coordinates). Next choose $\psi\in C_c^\infty((-r/2,r/2))$ to be non-negative and even with $\int_\R\psi=1$, and for each $\sigma\in \R^\kappa$,
define $\psi_\sigma(t):=\prod_{j=1}^\kappa\psi(t_j-\sigma_j)$. Our assumption implies that $\psi_\sigma$ is supported in $\mc{W}$.
We view $\Sigma$ as an open subset of $\{(1,\bar{t})\,|\, \bar{t}\in \R^{\kappa-1}\}$ and choose $q\in C_c^\infty(\Sigma;\R^+)$ with small support and let $Q=\int_{\R^{\kappa-1}}q(\bar{t})d\bar{t}>0$, and $\omega\in C_c^\infty((0,1);[0,1])$ with $W:=\int_0^1 \omega>0$. We introduced $q$ in order to apply Proposition \ref{prop:raw-convergence-result}, that holds only locally uniformly with respect to $\sigma$.
We consider $\lim_{N\to \infty} \nu(F_N)$ for $\sigma(\theta):=(1,\theta)\in \Sigma$ where
\[
F_N(t):=\frac{1}{N}\sum_{k=1}^{N}\int_{\R^{\kappa-1}}\omega(\tfrac{k}{N}) \psi_{\sigma(\theta)}^{*k}(t)q(\theta)d\theta=\frac{1}{N}\sum_{k\in [\eps N,(1-\eps)N]}\int_{\R^{\kappa-1}}\omega(\tfrac{k}{N}) \psi_{\sigma(\theta)}^{*k}(t)q(\theta)d\theta.
\]
for some $\eps>0$ (we use the compact support property of $\omega$).
\begin{figure}
\includegraphics[scale=0.22]{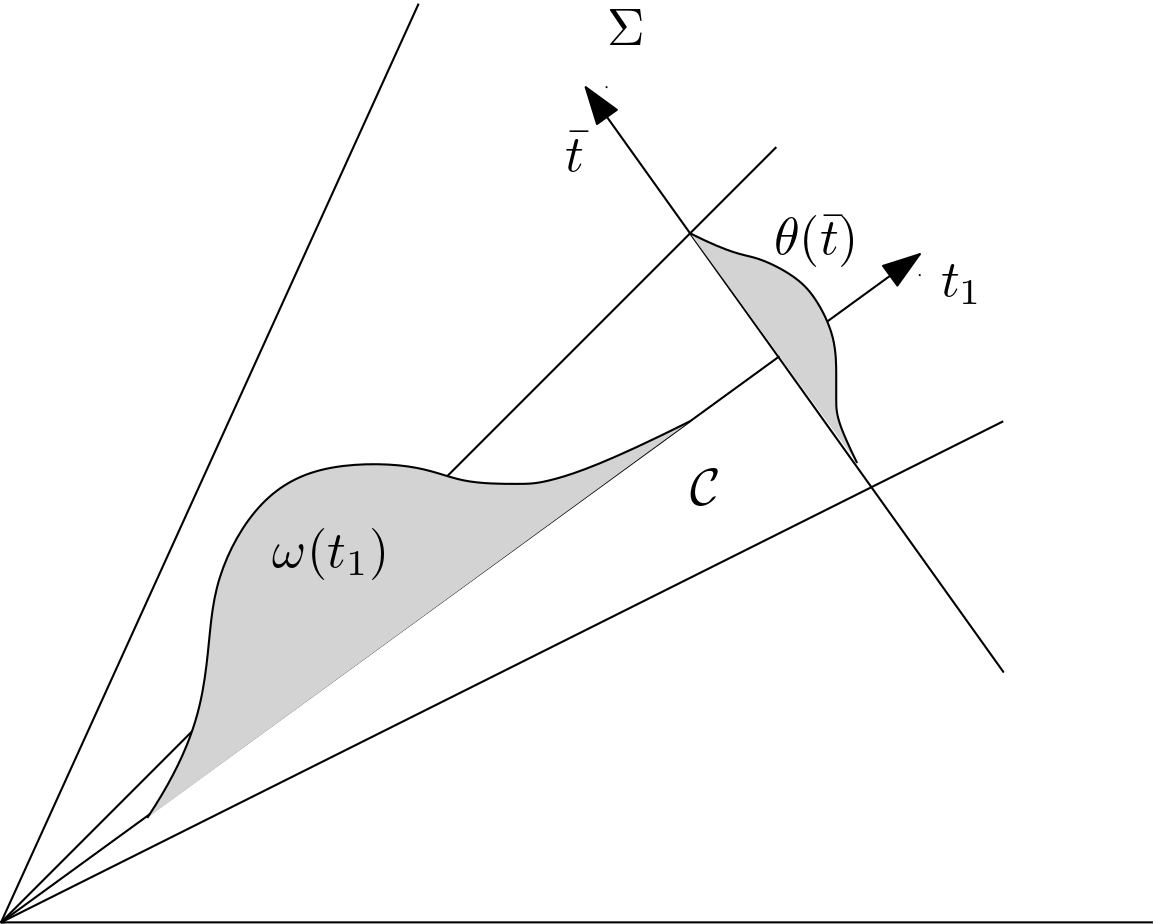}
\caption{Support of the cutoff functions $\omega$ and $\theta$.}
\label{fig:cutoff}
\end{figure}
By applying Proposition \ref{prop:raw-convergence-result} with $\lambda = 0$, we have
\[\begin{split}
F_N(t):=&\frac{1}{N}\sum_{k=1}^{N}(1+\mc{O}(e^{-\eps k}))\omega(\tfrac{k}{N})\int_{\R^{\kappa-1}}q(\theta)d\theta
\end{split}\]
and the sum over $k$ converges as $N\to \infty$ to $\int_0^1\omega$ (Riemann sum), thus
\begin{equation}\label{eq:nuFNlim}
WQ\mu(f)=\lim_{N\to \infty}\nu(F_N).
\end{equation}
In the proof of \cite[Proposition 5.4]{BGHW20}, it is shown that if $h(t):=t_1^{1-\kappa}\omega(t_1)q(\bar{t}/t_1)$ with $t=(t_1,\bar{t})$, then with $G_N(t):=N^\kappa F_N(tN)$
\[ \|G_N(t)- h(t)\|_{L^2(\R^\kappa,dt)}\to 0 \textrm{ as }N\to \infty.\]
The proof of this convergence is based on the following estimates on the Fourier transform $\hat{G}_N$ of $G_N$: for $\delta>0$ small, and all $\ell\in\N$ there is $C_\ell>0,C>0,c_0>0$ so that
\begin{align} 
& \forall \xi, |\xi|>N^{1/2+\delta},\quad  |\hat{G}_N(\xi)|^2 \leq C(1+c_0\big|\frac{\xi}{N}\big|^2)^{-2\eps N},\label{bound1onGN}
\\
& \forall \xi, |\xi|\leq N^{1/2+\delta}, \quad |\hat{G}_N(\xi)|^2 \leq
C_\ell(|\xi|^{-\ell}+N^{-\ell(\frac{1}{2}-\delta)})\label{bound2onGN}
\end{align}
and $G_N(\xi)\to \hat{h}(\xi)$ for each $\xi\in \R^\kappa$.  The estimate above for $|\xi|>N^{1/2+\delta}$ 
follows from the bound $|\hat{\psi}(\xi)|\leq (1+c_0|\xi|^2)^{-1}$ for some $c_0>0$ (proved in \cite[Eq. (5.12)]{BGHW20})
and 
\[ |\hat{G}_N(\xi)|=|\hat{F}_N(\frac{\xi}{N})|\leq \frac{1}{N}\sum_{k\in [\eps,N,(1-\eps)N}|\hat{\psi}(\frac{\xi}{N})|^{k}\leq |\hat{\psi}(\frac{\xi}{N})|^{\eps N}\]
 while the bound for $|\xi|\leq N^{1/2+\delta}$ is proved in \cite[Eq. (5.13)]{BGHW20}.
As a consequence, for each $\ell\gg \kappa$ one can take $N_0$ large enough so that for all $N>N_0$
\[ \begin{split}
\int_{|\xi|>N^{1/2+\delta}} \cjg \xi\cjd^{2\ell} |\hat{G}_N(\xi)-\hat{h}(\xi)|^2d\xi\leq & C_\ell \int_{|\xi|>N^{1/2+\delta}}\cjg \xi\cjd^{-\ell}+C\big(1+c_0\big|\frac{\xi}{N}\big|^2\big)^{-2\eps N} d\xi\\
\leq & C_\ell \Big(N^{-\ell(\frac{1}{2}+\delta)+\kappa}+N^\kappa \int_{|\xi|>N^{-\frac{1}{2}+\delta}}C\big(1+c_0|\xi|^2\big)^{-2\eps N}d\xi\Big)\\
\leq & C_\ell \Big(N^{-\ell(\frac{1}{2}+\delta)+\kappa}+N^\kappa e^{-c_0\eps N^{2\delta}}\Big)
\end{split}\]
The bound follows from the fact that $h\in C_c^\infty(\R^n)$, thus its Fourier transform is Schwartz, and the bound \eqref{bound1onGN}. Now, using \eqref{bound2onGN} (with $3\ell$ instead of $\ell$) and the fact that $\hat{h}$ is Schwartz, we also have
\[ {\bf 1}_{|\xi|\leq N^{1/2+\delta}} \cjg \xi\cjd^{2\ell} |\hat{G}_N(\xi)-\hat{h}(\xi)|^2\leq
C_\ell \cjg \xi\cjd^{-\ell} \]
where $C_\ell>0$ is independent of $N$. 
Using Lebesgue dominated convergence theorem, we conclude that $\|\cjg \xi\cjd^\ell (\hat{G}_N(\xi)-\hat{h}(\xi))\|_{L^2}\to 0$ as $N\to \infty$, thus by Sobolev embedding $\|G_N-h\|_{C^0}\to 0$ as $N\to\infty$. Therefore
\begin{equation}\label{FNh}
\|F_N(t)-N^{-\kappa}h(t/N)\|_{C^0}=o(N^{-\kappa}).
\end{equation}
Choosing $q=1$ on an open set $U\subset \Sigma$ and $\omega(t_1)=t_1^{\kappa - 1}$ in $(\eps,1-\eps)$, we deduce that  for $N$ large enough
\[
F_N(t) \geq \frac{1}{2N^\kappa}{\bf 1}_{[\eps N,N(1-\eps)]}(t_1){\bf 1}_{U}(\bar{t}/t_1).
\]
This implies that if $\mc{C}_{a,b}(U):=\{(t_1,\bar{t})\,|\, t_1\in [a,b], \bar{t}/t_1\in U\}$
\[
\nu(\mc{C}_{\eps N,N(1-\eps)}(U))\leq 2N^\kappa \nu(F_N) \leq 3N^\kappa QW\mu(f)
\]
where we used \eqref{eq:nuFNlim}. Thus we obtain $\nu(\mc{C}_{0,N}(U))=\mc{O}(N^\kappa)$ by letting $\eps\to 0$. This estimate thus also implies by a covering argument that $\nu(\mc{C}\cap \{|A|<N\})=\mc{O}(N^\kappa)$.
Coming back to a general $\omega,q$, using that
$\supp(F_N)\cup \supp(h(\cdot/N))\subset \mc{C}_{0,3N}(U')$ for some open set $U'\subset \W\cap \{t_1=1\}$, and since $\nu(\mc{C}_{0,3N}(U'))=\mc{O}(N^{\kappa})$, we obtain by \eqref{FNh}
\[\lim_{N\to \infty}N^{-\kappa}\nu(h(\cdot/N))=\lim_{N\to \infty} \nu(F_N)=WQ\mu(f).\]
Next, let $U$ be a small open ball in $\Sigma$, we can choose $q_j\in C_c^\infty(\Sigma)$ supported near $U$ for $j=1,2$ so that $q_1\leq \textbf{1}_U\leq q_2$ and $\int q_j=|U|+\mc{O}(\eps)$, and for $0<a<b<1$, choose
$\omega_j\in C_c^\infty((0,1),[0,1])$ such that
$\omega_1\leq t^{\kappa-1}\textbf{1}_{[a,b]}\leq \omega_2$ with $\int_0^1\omega_j(t)dt=\int_a^{b}t^{\kappa-1}dt+\mc{O}(\eps)$. Write now $h_j(t)=t_1^{1-\kappa}\omega_j(t_1)q_j(\bar{t}/t_1)$.
One then has
\[
N^{-\kappa}\nu(h_1(\cdot/N))\leq N^{-\kappa}\nu(\mc{C}_{\delta N,(1-\delta) N}(U))\leq N^{-\kappa}\nu(h_2(\cdot/N))
\]
thus if $V_{a,b}=\int_a^{b}t^{\kappa-1}dt$, we obtain for each $\eps>0$ small
\[
\begin{gathered}
(V_{a,b}-\eps)(|U|-\eps)\mu(f)\leq \liminf_{N\to \infty}
N^{-\kappa}\nu(\mc{C}_{a N,b N}(U))\\
\limsup_{N\to \infty}N^{-\kappa}\nu(\mc{C}_{a N,b N}(U))\leq (V_{a,b}+\eps)(|U|+\eps)\mu(f).
\end{gathered}
\]
We let $\eps\to 0$ and deduce that
\[\lim_{N\to\infty}
N^{-\kappa}\nu(\mc{C}_{a N,b N}(U))=\mu(f)V_{a,b}|U|=\mu(f) |\mc{C}_{a,b}(U)|.\]
Since $N^\kappa |\mc{C}_{a,b}(U)|=|\mc{C}_{aN,bN}(U)|$ and $|\mc{C}_{aN,bN}(U)|^{-1} \nu(\mc{C}_{a N,b N}(U))$ is the RHS of \eqref{eq:bowenformula}, we have proved the desired result in the case of $\mc{C}$ being a cone with section $U$, i.e. a small proper subcone of $\mc{W}$. By compactness (of the cone section), any 
proper subcone of $\mc{W}$ can be decomposed into a finite union of small cones and applying the result for small cones we obtain the general result.
\end{proof}

\subsection{A dynamical zeta function}

We conclude by some comments on links with dynamical zeta functions. In \cite{BT08}, for an operator $K$, Baladi and Tsujii introduce the flat determinant as
\[
{\det}^\flat(1 - z K) := \exp\left( - \sum_{\ell\geq 0} \frac{z^\ell}{\ell} \Tr^\flat K^\ell \right),
\]
provided the RHS makes sense and converges. From the argument of the previous section, we see that $\det^\flat(1- z R_\psi(\lambda))$ is well defined for $|z|$ small enough, and has a holomorphic extension to $z\in \C$. Indeed for $|z|$ small enough,
\[
\frac{ \partial_z \det^\flat(1- z R_\psi(\lambda) )}{\det^\flat ( 1- z R_\psi(\lambda))} = - \frac{1}{z} \sum_{\ell \geq 1} z^\ell \Tr^\flat R_\psi(\lambda)^{\ell} = - \frac{1}{z} Z_{1,\psi}\left(\frac{1}{z}, \lambda\right).
\]
According to Proposition \ref{thm:Zf}, the RHS here is meromorphic for $z$ close to $1$ and $\lambda\in \mathfrak{a}_\C^\ast$. It remains to show it has simple poles, with integer residue, in order to deduce that $\det^\flat(1-zR_\psi(\lambda))$ is holomorphic in $(z,\lambda)$. According to Corollary \ref{cor:poles-Z_1}, near a pole $1/s_0$, $Z_{1,\psi}$ takes the form
\[
-\frac{1}{z} Z_{1,\psi}\left(\frac{1}{z},\lambda\right) = - \frac{ s_0 m }{z( 1/z  - s_0 )} + \text{ holomorphic} =  \frac{m}{z - 1/s_0} + \text{ holomorphic}.
\]
Here $m$ is the dimension of some generalized eigenspace, i.e an integer. This proves that $(\lambda,z)\mapsto \det^\flat(1-zR_\psi(\lambda))$ is holomorphic in $\a_\C^*\times B_\C(1,1/4)$. The parameter $z$ here is auxiliary, so we can fix its value to $1$, and obtain:
\begin{theorem}\label{thm:zeta-function}
Let $X$ be an Anosov action of $\R^\kappa$ with positive Weyl chamber $\W$, and let $\psi\in C^\infty_c(\W,\R^+)$ have $\int \psi =1$ and small enough support. Then
\[
d_\psi(\lambda) := \exp \left( - \sum_{T\in\T}\sum_{A\in L(T)} \frac{\mathrm{vol}(T)}{|\det(1-\mathcal{P}_A)|}e^{-\lambda(A)}\sum_k \frac{\psi^{*k}(A) }{k} \right),
\]
originally defined for $\Re \lambda$ large enough (in the sense of evaluating it in elements of the positive Weyl chamber $\W$), has a holomorphic continuation to $\a_\C^*$. This continuation is a regularized version of the formal product
\[
\prod_{\zeta \in \Res(-X)} (1- \hat{\psi}(\lambda - \zeta) )^{\textup{multiplicity of }\zeta}.
\]
This means that for each $\lambda_0 \in \C^\kappa$, there exist only a finite number of $\zeta$'s in $\Res(X)$ such that $|\hat{\psi}(\lambda_0-\zeta)-1| < 1/2$, and for $\lambda$ close enough to $\lambda_0$,
\[
d_\psi(\lambda) = \prod_{\zeta \in \Res(X)} (1- \hat{\psi}(\lambda - \zeta) )^{\textup{multiplicity of $\zeta$}} \times \textup{ a holomorphic function}.
\]
\end{theorem}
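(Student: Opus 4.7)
The strategy is to identify $d_\psi(\lambda)$ with the flat determinant $\det^\flat(1-R_\psi(\lambda))$ and then read off both the meromorphic continuation and the local factorization from the spectral theory of $R_\psi(\lambda)$ combined with Corollary~\ref{cor:poles-Z_1}.

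First I would establish the identification. Using the Guillemin trace formula \eqref{trafR^k} with $f\equiv 1$, namely
\[
\Tr^\flat R_\psi(\lambda)^k=\sum_{T\in\T}\sum_{A\in\W\cap L(T)}\frac{\Vol(T)\psi^{(k)}(A)e^{-\lambda(A)}}{|\det(1-\mc{P}_A)|},
\]
together with the Baladi--Tsujii definition $\det^\flat(1-R_\psi(\lambda))=\exp(-\sum_{k\geq 1}k^{-1}\Tr^\flat R_\psi(\lambda)^k)$, and exchanging the two summations, one recognises $d_\psi(\lambda)=\det^\flat(1-R_\psi(\lambda))$. The interchange is justified, for $\Re\lambda$ large enough in a basis of $\W$, by the exponential bound on the number of closed orbits in Lemma~\ref{estimate_periodic} combined with $\supp\psi^{(k)}\subset k\mc{O}$, yielding absolute convergence on this region.

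The meromorphic extension of $d_\psi$ to $\C^\kappa$ then follows from the already established meromorphy of $(\lambda,z)\mapsto \det^\flat(1-zR_\psi(\lambda))$ on $\C^\kappa\times\C$ (proved in the preceding discussion via the identity $\partial_z\log\det^\flat(1-zR_\psi(\lambda))=-z^{-1}Z_{1,\psi}(\lambda,1/z)$ together with the positive-integer residues from Corollary~\ref{cor:poles-Z_1}), restricted to the hyperplane $\{z=1\}$. For the local structure near $\lambda_0\in\C^\kappa$, I would fix $N$ large enough so that, uniformly for $\lambda$ in a small neighborhood $U$ of $\lambda_0$, the operator $R_\psi(\lambda)$ is quasi-compact on $\mc{H}^{NG}$ with essential spectral radius at most $1/4$; this is possible by the estimates recalled in Proposition~\ref{limiteR^k}. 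Then $R_\psi(\lambda_0)$ has only finitely many eigenvalues in the disc $B(1,1/2)$, and by the argument in Proposition~\ref{prop:raw-convergence-result} the generalised eigenspace at each such eigenvalue $s_0$ splits as $\bigoplus_{\zeta\in\Res(X),\,\hat\psi(\lambda_0-\zeta)=s_0}E_\zeta$, with each $E_\zeta$ the finite-dimensional generalised resonant-state space at $\zeta$ of dimension $m_\zeta$. It follows that the set $\Lambda_{\lambda_0}:=\{\zeta\in\Res(X)\,:\,|\hat\psi(\lambda_0-\zeta)-1|<1/2\}$ is finite.

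Combining this finiteness with the fact (from Corollary~\ref{cor:poles-Z_1}) that the zero of $\det^\flat(1-zR_\psi(\lambda))$ at $z=1/\hat\psi(\lambda-\zeta)$ has order exactly $m_\zeta$, for $\lambda\in U$ the only zeros of $d_\psi$ are contributed by $\zeta\in\Lambda_{\lambda_0}$, and dividing $d_\psi(\lambda)$ by the finite product $\prod_{\zeta\in\Lambda_{\lambda_0}}(1-\hat\psi(\lambda-\zeta))^{m_\zeta}$ produces a holomorphic and non-vanishing factor near $\lambda_0$, giving the stated regularised product formula. The main obstacle I anticipate is controlling the stability of the finite-dimensional spectral projectors of $R_\psi(\lambda)$ at eigenvalues in $B(1,1/2)$ as $\lambda$ varies in a neighbourhood of $\lambda_0$, in order to ensure that the multiplicities $m_\zeta$ on the resonance-counting side and the order-of-vanishing side of $\det^\flat$ agree uniformly in $\lambda$, which in turn legitimises the restriction from $(\lambda,z)$-meromorphy to the hyperplane $\{z=1\}$.
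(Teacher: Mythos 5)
Your proposal follows essentially the same route as the paper: identifying $d_\psi(\lambda)$ with $\det^\flat(1-R_\psi(\lambda))$ via the Guillemin trace formula \eqref{trafR^k} and Lemma \ref{estimate_periodic}, using $\partial_z\log\det^\flat(1-zR_\psi(\lambda))=-z^{-1}Z_{1,\psi}(\lambda,1/z)$ together with the simple poles and integer residues of Corollary \ref{cor:poles-Z_1} to get joint meromorphy in $(\lambda,z)$, restricting to $z=1$, and reading the local factorization off the quasi-compactness of $R_\psi(\lambda)$ and the correspondence between its eigenvalues near $1$ and Ruelle--Taylor resonances. The only minor slip is the citation: the uniform quasi-compactness for general $\lambda$ near $\lambda_0$ comes from the parametrix lemma of Section 4.2 (bounds \eqref{boundsR^0R^1}) rather than Proposition \ref{limiteR^k}, which concerns $\lambda=0$.
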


As far as we know, this is the first appearance of a multi-parameter zeta function for actions with a global meromorphic extension. The reader accustomed to the rank 1 case may find the formula a bit surprising and wonder if it is possible to replace the term $F:=\sum \psi^{*k}/k$ by a simpler weight function. Let us first observe that formally, in the rank 1 case, to obtain $F=\mathbf{1}_{\R^+}$, we need to take formally $\hat\psi= 1- e^{-1/s}$, that is $\psi= J_1(2\sqrt{t})/\sqrt{t}$. This is certainly smooth enough for our arguments, but the support assumption is not verified. Let us explain briefly why this is not an easy question. The natural extension of the rank 1 case would be to replace it by
\[
\tilde{F} := {\bf 1}_{\{\lambda_j > 0,\ j= 1\dots \kappa\}},
\]
having taken a basis comprised of elements of $\W$ and $\la=(\la_1,\dots,\la_\kappa)$ in this basis.
However, there seems to be no hope that this can define a globally meromorphic function. Indeed, let us consider the singular set of such a function. Each resonance $\zeta$ would contribute by
\[
\left( \prod \frac{1}{\lambda_j - \zeta_j}\right)^{\text{multiplicity of $\zeta$}}.
\]
For the resulting product to be meromorphic, the singular set has to be (at least) locally closed. It is given by
\[
\left\{ \lambda\in \C^\kappa\ |\ \exists \zeta\in\Res(X),\ \exists j=1\dots \kappa, \lambda_j = \zeta_j \right\}.
\]
For this to be locally closed, we need that for any sequence of resonances $\zeta^\ell$, each coordinate $(\zeta_j^\ell)_\ell$ tends to infinity. For example for the resonances with small real part, this means that the imaginary parts cannot equidistribute in $\R^\kappa$. This would certainly be a surprise to us, in particular as for Weyl chamber flows a Weyl-lower bound on the number of Ruelle Taylor resonances with $\Re(\lambda)=-\rho$ is known \cite[Theorem 1.1]{HWW21}.

\section{Application to lattice point counting}\label{sec:counting}
In this final section we will work out the consequences of the Bowen formula for the SRB measures in terms of the counting-problem of lattice points. We focus in particular on the case of Weyl chamber flows, where we obtain precise estimates for the exponential growth rates.

Choose a proper subcone $\mc C\subset \mc W$ and fix some $\xi\in \a^*$ such that $\xi$ is positive on a small conical neighbourhood of $\mathcal{C}$.
For $0\leq a<b$ we define $\mc C_{a,b}:= \{A\in\mc C,\xi(A) \in [a,b]\}$ and the lattice point counting function
\[
 \mathcal N_{\mc C_{a,b}}:= \sum_{T\in\mc T}\sum_{A\in L(T)\cap \mc C_{a,b}} \textup{vol}(T).
\]

We furthermore introduce a counting function that is additionally weighted by the Jacobians $|\det(1-\mc P_A)|$
\[
 \mathcal N^w_{\mc C_{a,b}}:= \sum_{T\in\mc T}\sum_{A\in L(T)\cap \mc C_{a,b}} \frac{\textup{vol}(T)}{|\det(1-\mc P_A)|}.
\]
Note that by Theorem~\ref{thm:formuledebowenintro} choosing the constant test function $f=1$ we get for any $q>1$
\begin{equation}\label{eq:counting_lim}
 \lim_{n\to\infty} \frac{\mathcal N^w_{\mc C_{q^{n-1},q^{n}}}}{q^{\kappa (n-1)}} = |\mc C_{1,q}|.
\end{equation}
Let us define
\begin{align*}
 m:=&\liminf_{n\to\infty} \frac{\log\left( \inf_{A\in\mc C_{q^{n-1},q^{n}}\cap(\cup_{T\in\mc T} L(T))}|\det (1-\mc P_A)|\right)}{q^n},\\
 M:=&\limsup_{n\to\infty} \frac{\log\left( \sup_{A\in\mc C_{q^{n-1},q^{n}}\cap(\cup_{T\in\mc T} L(T))}|\det (1-\mc P_A)|\right)}{q^n}.\\
\end{align*}
Then for any $\varepsilon>0$ there is $N$ such that for $n>N$ we have
\[
 e^{(m-\varepsilon)q^n}\mathcal N^w_{\mc C_{q^{n-1},q^{n}}} \leq \mathcal N_{\mc C_{q^{n-1},q^{n}}} \leq \mathcal N^w_{\mc C_{q^{n-1},q^{n}}} e^{(M+\varepsilon)q^n}
\]
and taking additionally \eqref{eq:counting_lim} into account we get
\[
 (|\mc C_{1,q}|-\varepsilon)q^{\kappa (n-1)} e^{(m-\varepsilon)q^n} \leq \mathcal N_{\mc C_{q^{n-1},q^{n}}} \leq (|\mc C_{1,q}|+\varepsilon)q^{\kappa (n-1)} e^{(M+\varepsilon)q^n}.
\]
Now, using $\mathcal N_{\mc C_{0,q^n}} = \mathcal N_{\mc C_{0,1}} + \sum_{k=1}^{n} \mathcal N_{\mc C_{q^{k-1},q^{k}}}$ we deduce
\[
 m \leq  \liminf_{n\to\infty} \frac{\log \mc N_{\mc C_{0,q^n}}}{q^n}\leq \limsup_{n\to\infty} \frac{\log \mc N_{\mc C_{0,q^n}}}{q^n}\leq M.
\]
Now let us assume that there is $\eta\in \a^*$ positive on $\W$ such that for any proper subcone $\mc C\subset \W$ there is $\varepsilon>0$ such that $|\det(1-\mc P_A)| = e^{\eta(A)}(1-\mc{O}(e^{-\varepsilon|A|}))|$ for all $A\in \C$.  If we now fix $\eta/\|\eta\|$ then we get the particularly simple expressions $M = \|\eta\|$ and $m=\|\eta\|/q$. As we can choose $q>1$ arbitrary close to $1$ we get the following result, which is precisely  Corollary \ref{cor:comptage}: 
\begin{prop}\label{countingprop}
For an Anosov action for which there is $\eta\in \a^*$ with the above properties, one has for each proper subcone
$\mc C\subset \mc W$
\[
\lim_{R\to\infty}\frac{\log \mc N_{\mc C_{0,R}}}{R} =\|\eta\|.
\]
\end{prop}
The assumption on $|\det(1-\mc P_A)|$ is fulfilled for all standard Anosov actions. For example for Weyl chamber flows, a periodic point $x_0 =\Gamma g_0M \in \Gamma\backslash G/M =\M$ under $A_0\in \W$ implies the existence of $\gamma_0\in \Gamma, m_0\in M$ such that $\gamma_0g_0\exp(A_0)m_0 = g_0$. With these notations we can give an explicit expression of
$|\det(1-\mc P_{A_0})|$
\[\begin{split}
|\det(1-\mc P_{A_0})| &=\prod_{\alpha\in\Delta_+}\left|\det_{\mathfrak{g}_\alpha}\left(1-e^{-\alpha(A_0)}\tu{Ad}(m_0^{-1})\right)\right| \left|\det_{\mathfrak{g}_{-\alpha}}\left(1-e^{\alpha(A_0)}\tu{Ad}(m_0^{-1})\right)\right|\\
 &=e^{2\rho(A_0)}\prod_{\alpha\in\Delta_+}\left|\det_{\mathfrak{g}_\alpha}\left(1-e^{-\alpha(A_0)}\tu{Ad}(m_0^{-1})\right)\right| \left|\det_{\mathfrak{g}_{-\alpha}}\left(\tu{Ad}(m_0^{-1})-e^{-\alpha(A_0)}\right)\right|.\\
 \end{split}
\]
Here $\Delta_+\subset \a^*$ denotes the set of positive roots, $\mathfrak g_{\pm \alpha}$ the corresponding root spaces, $m_\alpha:=\dim \mathfrak g_\alpha$ and we use the usual notation $\rho:= \sum_{\alpha\in\Delta_+}\frac{m_\alpha}{2}\alpha \in \a^*$ for the half sum of positive roots. As the adjoint action of $M$ on $\mathfrak g_\alpha$ is orthogonal and for any proper subcone $\mc C\subset \W$, $\alpha(A)>\varepsilon|A|$ for all $A\in \mc C$ we deduce by the continuity of the determinant
\begin{equation}\label{det1-P_A}
|\det(1-\mc P_A)|=e^{2\rho(A)}(1+\mc O(e^{-\varepsilon |A|}).
\end{equation}

\appendix

\section{Upper bound on the number of periodic orbits}
\label{appendix:bound-number-closed-orbits}

\begin{lemma}\label{estimate_periodic}
Let $n=\dim \M$, let $dA$ be the Haar measure on $\a$, let $\mc{P}:=\cup_{T\in \mc{T}}L(T)$ and $M:=\sup_{A\in\W,|A|=1}\|e^{X_A}\|_{\mc{L}(C^2(\M,\R))}$.

Fix a proper subcone $\mc{C}\subset \W$, then there is $C>0$ such that for all $\ell\geq 0$
\[
\begin{gathered}
\sharp \{ A\in \mc{P}\cap \mc C\ |\  \, |A|\leq \ell\}\leq C\ell^\kappa e^{(n-\kappa)M\ell},\\
\sharp \{ T\in \mc{T}\ |\ \exists A\in L(T) \cap \mc C\text{ with }|A|\leq \ell\}\leq C\ell^\kappa e^{(n-\kappa)M\ell}
\end{gathered}
\]
and for each $\eps>0$ and $\delta>0$,
\begin{equation} \label{volumeestimate}
(v_g\otimes dA) (\{ (x,A)\in \M\times \W\ |\  |A|\in (\delta,\ell), d_g(x,\varphi^{A}_1(x))<\eps\})\leq C\eps^ne^{nM\ell}.
\end{equation}
\end{lemma}

\begin{proof}
First, for $\delta_0>0$ small there is a family of invertible linear maps
$\mc{T}_{x,y}:T_{x}\M\to T_y\M$ depending continuously on $d_g(x,y)\leq \delta_0$
such that $\mc{T}_{x,x}={\rm Id}$ and $\mc{T}_{x,y}$ mapping $E_u(x),E_s(x)$,
and $E_0(x)$ onto $E_u(y),E_s(y)$ and $E_0(y)$.  Then exactly the same proof
as \cite[Lemma A.1]{DZ16a} shows that for each $r>0$ there is
$\delta \in (0,\delta_0)$ and $C>0$ such that if
$d_g(x,\varphi^A_1(x))<\delta$, $A\in\mc C$ with $|A|>r$ and
$v\in E_u(x)\oplus E_s(x)$, then
\begin{equation}\label{bounddphi}
|v|\leq C| (d\varphi^A_1-\mc{T}_{x,\varphi^A_1(x)})v|.
\end{equation}
This should be compared to the fact that for $A\in\mc C$, $\varphi^A_1(x)=x$ we
know that $(d_x\varphi^A_1-\mathrm{Id})_{|E_u(x)\oplus E_s(x)}$ is invertible with a uniform bound on the inverse as we restrict to a proper subcone $\mc C\subset \W$ that is bounded away from the walls of the Weyl chamber.
\eqref{bounddphi} generalizes this invertibility to orbits that are only approximately closed.

Next, we have, by the group property of $e^{X\cdot}$, that there is $C>0,M>0$ such that for all $A\in\W$, $x,x'\in \M$
\begin{equation}\label{boundC2}
\|e^{X_A}\|_{C^2\to C^2} \leq Ce^{M|A|}, \quad d_g(\varphi^A_1(x),\varphi^A_1(x'))\leq Ce^{M|A|}d_g(x,x').
\end{equation}
Next, we show a separation estimate between periodic tori.
\begin{lemma}\label{lem:separation}
Let $r>0$, then there is $C,\delta>0$ such that for all $\eps>0$ small, if $d_g(x,\varphi^A_1(x))\leq \eps$, $d_g(x',\varphi^{A'}_1(x'))\leq \eps$, $A,A'\in\W$ with $|A-A'|\leq \delta$, $d_g(x,x')\leq \delta e^{-M|A|}$, then $|A-A'|\leq C\eps,$ and furthermore there is $A''\in \a$ with $|A''|\leq 1$ such that $d_{g}(x,\varphi^{A''}_1(x'))\leq C\eps$.
\end{lemma}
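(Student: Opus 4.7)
The approach is to reduce everything to the study of a common Poincar\'e return map on a fixed codimension-$\kappa$ transversal $\Sigma$ through $x$ with $T_x\Sigma = E_u(x)\oplus E_s(x)$. The key structural input is commutativity of the $\mathbb{A}$-action: if $P_A\colon\Sigma\to\Sigma$ and $B_A\colon\Sigma\to\a$ are defined near $x$ by the decomposition $\varphi^A_1(z)=\varphi^{B_A(z)}_1(P_A(z))$ with $P_A(z)\in\Sigma$ and $|B_A(z)|$ small, then $\varphi^{A'}_1=\varphi^{A'-A}_1\circ\varphi^A_1$ together with uniqueness of the decomposition (valid for $|A-A'|\leq\delta$ small) forces $P_{A'}(z)=P_A(z)$ and $B_{A'}(z)=B_A(z)+(A'-A)$ wherever both are defined.

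The first block of steps extracts a common periodic fixed point on $\Sigma$. From $d_g(x,\varphi^A_1(x))\leq\eps$ one reads off $|P_A(x)-x|\leq C\eps$ and $|B_A(x)|\leq C\eps$; writing $x'=\varphi^{C'}_1(x'_\Sigma)$ with $|C'|+|x'_\Sigma-x|\leq Cd_g(x,x')\leq C\delta e^{-M|A|}$ and composing with $\varphi^{-C'}_1$ (bi-Lipschitz with constant close to $1$) yields analogously $|P_{A'}(x'_\Sigma)-x'_\Sigma|\leq C\eps$ and $|B_{A'}(x'_\Sigma)|\leq C\eps$. The estimate \eqref{bounddphi} makes $dP_A(x)-\mathrm{Id}$ boundedly invertible on $T_x\Sigma$, so the implicit function theorem produces fixed points $z^\ast\in\Sigma$ of $P_A$ near $x$ and $z^{\ast\prime}\in\Sigma$ of $P_{A'}=P_A$ near $x'_\Sigma$. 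Resolving the Newton iteration along the Anosov splitting yields the sharper bound
\[
|z^\ast-x|_{E_s(x)}\leq C\eps,\qquad |z^\ast-x|_{E_u(x)}\leq Ce^{-\nu|A|}\eps,
\]
and the standard uniform separation of distinct periodic tori for Anosov actions forces $z^\ast=z^{\ast\prime}$ once $\eps,\delta$ are small enough. Hence $z^\ast$ lies on a single periodic torus with period $A-B_A(z^\ast)\in L(T_{z^\ast})$.

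To quantify, I would differentiate the decomposition and project onto the $\mathbb{A}$-orbit direction, using that $E_u,E_s$ vary by $\mathcal{O}(d_g(y,\varphi^A_1(y)))$ under the flow, to obtain
\[
\|dB_A(y)\|_{E_u(y)}\leq C\,d_g(y,\varphi^A_1(y))\,e^{\nu|A|},\qquad \|dB_A(y)\|_{E_s(y)}\leq C\,d_g(y,\varphi^A_1(y))\,e^{-\nu|A|}.
\]
Along a smooth path from $x$ to $z^\ast$ in $\Sigma$, the hyperbolic decomposition of $z^\ast-x$ gives $d_g(y,\varphi^A_1(y))\leq C\eps$ uniformly in $|A|$, and integrating $dB_A(y)(z^\ast-x)$ after splitting into $E_u,E_s$ components yields $|B_A(z^\ast)-B_A(x)|\leq C\eps^2$, hence $|B_A(z^\ast)|\leq C\eps$, and likewise $|B_{A'}(z^\ast)|\leq C\eps$. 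Since $B_{A'}(z^\ast)-B_A(z^\ast)=A'-A$ by the identity above, this is the first conclusion; for the second, take $A'':=-C'$, so that $|A''|\leq C\delta\leq 1$ (upon choosing $\delta$ small) and $d_g(x,\varphi^{A''}_1(x'))=d_g(x,x'_\Sigma)\leq d_g(x,z^\ast)+d_g(z^\ast,x'_\Sigma)\leq C\eps$.

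The main obstacle is this last quantitative bound on $|B_A(z^\ast)-B_A(x)|$: a crude Lipschitz estimate of $B_A$ blows up like $e^{M|A|}$, and one must exploit the exact cancellation between the exponentially large $\|dB_A\|_{E_u}$ and the exponentially small unstable component $|z^\ast-x|_{E_u}$ supplied by the hyperbolicity of $P_A$. This also clarifies the sharp scaling of the hypothesis $d_g(x,x')\leq\delta e^{-M|A|}$: the $e^{-M|A|}$ factor is exactly what absorbs the Lipschitz blowup of $\varphi^A_1$ when comparing $x$ and $x'$ under a single application of the flow.
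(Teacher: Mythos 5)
Your strategy is genuinely different from the paper's, and as written it has three concrete gaps. The paper never constructs periodic points at all: it picks $A''$ with $\varphi^{A''}_1(x')-x\in E_u(x)\oplus E_s(x)$, Taylor-expands $\varphi^A_1$ around $x$, and notes that the quadratic error $Ce^{M|A|}|x-x''|^2$ is at most $C\delta|x-x''|$ thanks to the hypothesis $d_g(x,x')\leq\delta e^{-M|A|}$; transversality of $E_0$ and \eqref{bounddphi} then give $|A-A'|+|x-x''|\leq C\delta(|A-A'|+|x-x''|)+C\eps$, which self-absorbs. This works for every small $\eps$ uniformly in $|A|$. Your first gap is exactly at this uniformity: the implicit function theorem you invoke to produce the fixed point $z^\ast$ of $P_A$ with $|z^\ast-x|\leq C\eps$ needs the quadratic terms of $P_A$, which are of size $e^{M|A|}$, to be dominated by the uniform invertibility constant from \eqref{bounddphi} on a ball of radius $\sim\eps$; that forces $\eps\lesssim e^{-M|A|}$, while the lemma only assumes $\eps$ small independently of $|A|$. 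Making this step uniform in $|A|$ is the Anosov closing lemma (shadowing/graph-transform along the orbit), a much heavier input which the paper's direct estimate deliberately avoids.

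Second, to identify $z^\ast=z^{\ast\prime}$ you appeal to ``standard uniform separation of distinct periodic tori'' at the scale $\delta e^{-M|A|}$; but that statement is precisely Corollary \ref{cor:separation}, which the paper deduces \emph{from} Lemma \ref{lem:separation}, so as cited your argument is circular (it could be repaired by a quantitative local-uniqueness argument for the fixed point of $P_A$ on a ball of radius $\delta e^{-M|A|}$, but you do not make it). Third, the estimate you yourself flag as the main obstacle, $|B_A(z^\ast)-B_A(x)|\leq C\eps^2$, rests on pairing a claimed bound $\|dB_A\|_{E_u}\lesssim\eps\,e^{\nu|A|}$ against an unstable component $|z^\ast-x|_{E_u}\lesssim e^{-\nu|A|}\eps$. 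These exponents do not match: the growth of $dB_A$ along $E_u$ is governed by the \emph{maximal} expansion rate of $d\varphi^A_1$ (only bounded by $e^{M|A|}$ via \eqref{boundC2}), whereas the smallness of the unstable component of $z^\ast-x$ comes from the \emph{minimal} rate $\nu$, so the product can grow like $e^{(M-\nu)|A|}\eps^2$ and the asserted cancellation is unsubstantiated. In short, the route through genuine periodic points can presumably be completed only by importing the full closing-lemma machinery and a careful anisotropic bookkeeping; the paper's linearized slice argument sidesteps all three difficulties.
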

Letting $\varepsilon\to 0$ we get, as a direct consequence, the following:
\begin{cor}\label{cor:separation}
 If two periodic orbits $\cup_{t\in [0,1]}\varphi^A_t(x)$ and $\cup_{t\in [0,1]}\varphi^{A'}_t(x')$ have minimal distance $\leq \delta e^{-M|A|}$ and nearby period $|A-A'|<\delta$, and $A,A'\in \mathcal{W}$, then $A=A'$ and there is an invariant torus orbit $T$ such that $x,x'\in T$.
\end{cor}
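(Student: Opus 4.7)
The plan is to deduce Corollary~\ref{cor:separation} directly from Lemma~\ref{lem:separation} by a compactness argument as $\varepsilon\to 0$. Write $T_x = \tau(\mathbb{A})x$ and $T_{x'} = \tau(\mathbb{A})x'$ for the two periodic torus orbits (or more precisely the closed orbits, which by the preceding lemma on closedness of orbits with $A,A' \in \mathcal{W}$ are embedded tori). By compactness of $T_x\times T_{x'}$ we may assume that the given points $x,x'$ realize the minimal distance, so that $d_g(x,x') \leq \delta e^{-M|A|}$, while $d_g(x,\varphi^A_1(x)) = 0$ and $d_g(x',\varphi^{A'}_1(x')) = 0$.

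First I would apply Lemma~\ref{lem:separation} for arbitrarily small $\varepsilon>0$: the hypotheses hold with this $\varepsilon$ since both defect distances vanish. The conclusion gives $|A-A'| \leq C\varepsilon$ for every $\varepsilon>0$, hence $A = A'$. The lemma also produces for each $\varepsilon>0$ an element $A''_\varepsilon \in \mathfrak{a}$ with $|A''_\varepsilon| \leq 1$ and $d_g(x,\varphi^{A''_\varepsilon}_1(x')) \leq C\varepsilon$. By compactness of the closed ball $\{|A''|\leq 1\}\subset \mathfrak{a}$, we can extract a subsequence $\varepsilon_n\to 0$ with $A''_{\varepsilon_n}\to A''_\infty$. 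Continuity of $(A,y)\mapsto \varphi^A_1(y)$ then gives $d_g(x, \varphi^{A''_\infty}_1(x')) = 0$, i.e.\ $x = \tau(\exp A''_\infty)(x')$.

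This identity shows that $x\in T_{x'}$, so $T_x = T_{x'} =: T$ is a single embedded torus containing both points. Since $A = A'\in \mathcal{W}\cap L(T)$, $T$ is a periodic torus in $\mathcal{T}$, completing the proof.

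There is essentially no obstacle beyond the compactness extraction of $A''_\infty$; the content of the corollary is entirely encapsulated in Lemma~\ref{lem:separation}, and the step ``letting $\varepsilon\to 0$'' only requires noting that the minimal-distance choice of representatives $x,x'$ makes the $\varepsilon=0$ hypothesis available and that $\{|A''|\leq 1\}$ is compact so a limit $A''_\infty$ exists.
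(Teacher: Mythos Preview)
Your proof is correct and follows exactly the approach the paper indicates: the paper simply writes ``Letting $\varepsilon\to 0$ we get, as a direct consequence'' before stating the corollary, and your argument is the natural elaboration of that one-line remark, including the compactness extraction of $A''_\infty$ that makes the limit rigorous.
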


\begin{proof}[Proof of Lemma~\ref{lem:separation}] We follow closely the proof of \cite[Lemma A.2]{DZ16a}. Under our assumptions, $x,x',\varphi^A_1(x)$ and $\varphi^{A'}_1(x')$ are all in a small chart in $\R^n$ and we will frequently identify points in $\M$ and vectors in $T\M$ as elements in $\R^n$ via this chart.
The norm $|\bullet|$ then induces a metric that is equivalent to the Riemannian distance on $\M$.
We will furthermore assume that chart is chosen small enough such that for any $x,x'$ in the chart, the angle of $E_u(x)\oplus E_s(x)$ and $E_0(x')$ is bounded from below.
As $E_u(x)\oplus E_s(x)$ is a slice of the $\mathbb A$ action, there is $A''\in \a$ with $|A''|\leq 1$
such that $\varphi^{A''}_1(x')-x\in E_u(x)\oplus E_s(x)$. We write $x'':=\varphi^{A''}(x')$.
By the boundedness of the angles between $E_u\oplus E_s$ and $E_0$ there is a global $C$ such that $|x-x''|\leq C |x-x'|$.
Then by Taylor expansion there is $C>0$ such that
\[
\begin{gathered}
|\varphi^A_1(x'')-\varphi^A_1(x)-d\varphi^A_1(x)(x''-x)|\leq Ce^{M|A|}|x-x''|^2\leq C\delta |x-x''|,\\
|\varphi^{A'}_1(x'')-\varphi^A_1(x'')-X_{A'-A}(\varphi^A_1(x''))|\leq C|A-A'|^2\leq C\delta |A-A'|
\end{gathered}
\]
thus we obtain
\[
|\varphi^{A'}_1(x'')-\varphi^A_1(x)-d\varphi^A_1(x)(x''-x)-X_{A'-A}(\varphi^A_1(x''))|\leq C\delta(|x-x''|+|A-A'|).\]
Then, using $d_g(x,\varphi^A_1(x))\leq \eps$ and $d_g(x'',\varphi^{A'}_1(x''))\leq C\eps$ for some uniform $C>0$,
\[| (d\varphi^A_1(x)-{\rm Id})(x''-x)+X_{A'-A}(\varphi^A_1(x''))|\leq C\delta (|x-x''|+|A'-A|)+C\eps.
\]
Using that $\mc{T}_{x,y}$ is uniformly continuous in $x,y$, and
$d_g(x,\varphi^A_1(x))\leq \eps$ we get, if $\eps$ is chosen small enough (depending on $\delta$):
\[
|(d\varphi^A_1(x)-\mc{T}_{x,\varphi^A_1(x)})(x''-x)+X_{A'-A}(\varphi^A_1(x''))|\leq C\delta(|x''-x|+|A'-A|)+C\eps.
\]
Finally, using that $(d\varphi^A_1(x)-\mc{T}_{x,\varphi^A_1(x)})(x''-x)\in (E_u\oplus E_s)(\varphi^A_1(x))$, $X_{A'-A}(\varphi^A_1(x''))\in E_0\varphi^A_1(x'')$ together with the lower bound on the angle of these subspaces as well as \eqref{bounddphi}, we conclude that
\[
|A-A'|+|x-x''|\leq C(|d\varphi^A_1(x)-\mc{T}_{x,\varphi^A_1(x)})(x''-x)|+|A'-A|)\leq C\delta(|A'-A|+|x''-x|)+C\eps
\]
which gives the result by choosing $\delta$ small enough.
\end{proof}
We now prove \eqref{volumeestimate}. Let $\ell>0$ be large.
We take a maximal set of points $(x_j, A_j)_j$ in $\M\times \{A\in\W||A|\leq \ell \}$ so that $d_g(x_j,x_k)>\delta e^{-M\ell}/2$ or $|A_j-A_k|>\delta/2$. The number of such points is $\mc{O}(\ell^{\kappa} e^{nM\ell})$ and the polynomial term in $\ell$ can easily be absorbed (by changing $M$) such that we have $\mc{O}(e^{nM\ell})$ points.
One has
\[
\begin{gathered}
Z:=\{ (x,A)\in \M\times \W\, |\, |A|\leq\ell , d_g(x,\varphi^A_1(x))\leq \eps \}\subset \bigcup_j B_j, \\
B_j:=\{(x,A)\in \M\times \W\, |\, |A-A_j|\leq \delta/2, d_g(x,x_j)\leq \delta e^{-ML}/2, d_g(x,\varphi^A_1(x))\leq \eps \}.
\end{gathered}
\]
Now by Lemma \ref{lem:separation}, if $(x',A')\in B_j$, $B_j$ is contained in
an $\eps$-neighborhood of the orbit $\{\varphi^{A''}(x')\, |\, |A''|\leq 1\}$ times
a $\epsilon$ ball of $A'$ in $\W$. The first neighbourhood has $\nu_g$-measure $\mc{O}(\eps^{n-\kappa})$ and the latter $dA$-measure $\mc{O}(\epsilon^\kappa)$. This shows that $(v_g\otimes dA)(Z)=\mc{O}(\eps^n e^{nM\ell})$.

We conclude with a bound on the number of periodic tori. By Corollary~\ref{cor:separation}, we see that the periodic tori $T$ so that $L(T)\cap B(A,\delta)\not=\emptyset$ with $B(A,\delta):=\{A' \in\W\, |\, |A-A'|<\delta\}$ are separated by a distance at least $\delta e^{-M|A|}$, thus there are tubular neighborhoods of volume bounded below by $C\delta^{n-\kappa}e^{-(n-\kappa)M|A|}$ that do no intersect in $\M$.
By a covering argument we deduce that for each $A\in L(T)$ with $T\in \mc{T}$
\[\sharp\{ T' \in \mc{T}\, |\, L(T')\cap B(A,\delta)\not=\emptyset\}\leq C\delta^{\kappa-n}
e^{(n-\kappa)M|A|}\]
\[ \sharp\{ A' \in L(T')\cap\W\, |\, T'\in\mc{T}, A'\in B(A,\delta)\}\leq C\delta^{\kappa-n}
e^{(n-\kappa)M|A|}\]
(using again Corollary \ref{cor:separation} for the last bound) and therefore again by covering $\W\cap \{|A|\leq \ell\}$ by $\mc{O}(\ell^{\kappa})$ balls of radius $\delta$ we conclude that
\[ \sharp\{ A \in L(T)\cap\W\, |\, T\in\mc{T}, |A|\leq \ell\}\leq C\ell^{\kappa}e^{(n-\kappa)M\ell}.\qedhere\]
\end{proof}

\section{Multivariable analytic Fredholm theorem.}

The so-called ``Analytic Fredholm Theorem'' is a staple of spectral theory. In the main part of the article, we rely on this slight extension:
\begin{prop}\label{prop:Fredholm-several-variables}
Suppose that for $z\in \Omega \in \C^n$, a connected open set, $A(z)$ is a family of bounded Fredholm operators on $H$ (a fixed Hilbert space) depending holomorphically on $z$. If $A(z_0)^{-1}$ exists at a point $z_0 \in \Omega$, then $\Omega\owns z\mapsto A(z)^{-1}$ is a meromorphic family of operators. 
\end{prop} 

We have purposefully mimicked the statement \cite[Proposition 2.3]{Sjostrand-Zworski-Fredholm}, as we will follow their classical proof. 
\begin{proof}
Since $A(z_0)$ is invertible, its index is $0$, and thus so is the case for $A(z)$, $z\in\Omega$. Let $z_1\in\Omega$, and define 
\[
H_- = \ker A(z_1),\quad H_+ = \ran A(z_1)^\perp. 
\]
We can decompose
\[
H =  (H_-)^\perp \oplus H_- = (H_+)^\perp \oplus H_+ ,
\]
and in this decomposition, 
\[
A(z) = \begin{pmatrix}
E(z) & E_+(z) \\ E_-(z) & E_{-+}(z)
\end{pmatrix}. 
\]
At $z=z_1$, this becomes
\[
A(z_1) =\begin{pmatrix}
E(z_1) & 0 \\ 0 & 0
\end{pmatrix},
\]
with $E(z_1)$ invertible. For $z$ close to $z_1$, $E(z)$ remains invertible, so we can apply Schur's complement and deduce that $A(z)$ is invertible if and only if 
\[
P(z):= E_{-+}(z) - E_-(z) E^{-1}(z) E_+(z).
\]
is invertible. The upside is that $P(z)$ is now a \emph{matrix valued} holomorphic function of $z$. Taking $f(z) = \det P(z)$, we have found a holomorphic function in a neighbourhood of $z_1$, such that $f(z)=0$ if and only if $A(z)$ is not invertible. By connectedness of $\Omega$, and since $A(z_0)$ is invertible, we deduce that $f$ is a non constant function. Wherever $f(z)\neq 0$, we obtain 
\[
A(z)^{-1} = \begin{pmatrix}
E^{-1} + E^{-1} E_+ P^{-1} E_- E^{-1} & - E^{-1} E_+ P^{-1} \\ - P^{-1} E_- E^{-1} & P^{-1}
\end{pmatrix}.
\]
Using the comatrix formula for $P$, we can decompose this into
\[
A(z)^{-1} = \begin{pmatrix}
E(z)^{-1} & 0 \\ 0 & 0 
\end{pmatrix} + \frac{1}{f(z)} M(z),
\]
where $M(z)$ is a holomorphic family of bounded operators. 
\end{proof}

\bibliographystyle{amsalpha}
\bibliography{JRbib}

\providecommand{\bysame}{\leavevmode\hbox to3em{\hrulefill}\thinspace}
\providecommand{\MR}{\relax\ifhmode\unskip\space\fi MR }
\providecommand{\MRhref}[2]{%
  \href{http://www.ams.org/mathscinet-getitem?mr=#1}{#2}
}
\providecommand{\href}[2]{#2}
\begin{thebibliography}{GBGHW20}

\bibitem[Bew71]{Bewley}
T.~Bewley, \emph{Extension of the {B}irkhoff and von {N}eumann ergodic theorems
  to semigroup actions}, {Ann. Inst. Fourier} \textbf{7} (1971), no.~4,
  183--291.

\bibitem[BFL92]{BFL92}
Y.~{Benoist}, P.~{Foulon}, and F.~{Labourie}, \emph{{Flots d'Anosov \`a
  distributions stable et instable diff\'erentiables}}, {J. Am. Math. Soc.}
  \textbf{5} (1992), no.~1, 33--74 (French).

\bibitem[BKL02]{BKL02}
M.~Blank, G.~Keller, and C.~Liverani, \emph{{Ruelle--Perron--Frobenius spectrum
  for Anosov maps}}, Nonlinearity \textbf{15} (2002), 1905.

\bibitem[BL07]{BL07}
O.~{Butterley} and C.~{Liverani}, \emph{{Smooth {A}nosov flows: Correlation
  spectra and stability}}, {J. Mod. Dyn.} \textbf{1} (2007), no.~2, 301--322.

\bibitem[Bow72]{bowen1972}
R.~Bowen, \emph{The equidistribution of closed geodesics}, American journal of
  mathematics \textbf{94} (1972), no.~2, 413--423.

\bibitem[Bow74]{BowenSRB}
\bysame, \emph{Some systems with unique equilibrium states}, Mathematical
  Systems Theory \textbf{8} (1974), no.~3, 193--202.

\bibitem[BR75]{BR75}
R.~Bowen and D.~Ruelle, \emph{The ergodic theory of axiom {A} flows}, Invent.
  Math. \textbf{29} (1975), no.~3, 181--202.

\bibitem[BT07]{BT07}
V.~{Baladi} and M.~{Tsujii}, \emph{{Anisotropic H\"older and Sobolev spaces for
  hyperbolic diffeomorphisms}}, {Ann. Inst. Fourier} \textbf{57} (2007), no.~1,
  127--154.

\bibitem[BT08]{BT08}
V.~Baladi and M.~Tsujii, \emph{Dynamical determinants and spectrum for
  hyperbolic diffeomorphisms}, Probabilistic and Geometric Structures in
  Dynamics, Contemp. Math., vol. 469, Amer. Math. Soc., 2008, pp.~29--68.

\bibitem[BWS20]{BS20}
Y.~Borns-Weil and S.~Shen, \emph{Dynamical zeta functions in the nonorientable
  case}, arXiv preprint arXiv:2007.08043 (2020).

\bibitem[CRH21a]{CRH21b}
P.D. Carrasco and F.~Rodriguez-Hertz, \emph{{Contributions to the ergodic
  theory of hyperbolic flows: unique ergodicity for quasi-invariant measures
  and equilibrium states for the time-one map}}, arXiv preprint
  arXiv:2103.07333 (2021).

\bibitem[CRH21b]{CaRH21}
\bysame, \emph{{Equilibrium States for Center Isometries}}, arXiv preprint
  arXiv 2103.07323 (2021).

\bibitem[CRH21c]{CRH21}
\bysame, \emph{{Geometrical constructions of equilibrium states}}, Mathematics
  Research Reports \textbf{2} (2021), 45--54 (en).

\bibitem[Dei04]{Dei04}
A.~Deitmar, \emph{A prime geodesic theorem for higher rank spaces}, Geometric
  \& Functional Analysis GAFA \textbf{14} (2004), no.~6, 1238--1266.

\bibitem[DGRS20]{DGRS18}
N.V. Dang, C.~Guillarmou, G.~Rivi{\`e}re, and S.~Shen, \emph{The {F}ried
  conjecture in small dimensions}, Invent. Math. \textbf{220} (2020),
  no.~525--579.

\bibitem[DL22]{DL22}
N-T. Dang and J.~Li, \emph{{Equidistribution and counting of periodic flat
  tori}}, arXiv preprint arXiv:2202.08323 (2022).

\bibitem[dlL92]{delallave92}
R.~de~la Llave, \emph{Smooth conjugacy and {S-R-B} measures for uniformly and
  non-uniformly hyperbolic systems}, {Comm. Math. Phys.} \textbf{150} (1992),
  no.~289--320.

\bibitem[DZ16]{DZ16a}
S.~{Dyatlov} and M.~{Zworski}, \emph{{Dynamical zeta functions for {A}nosov
  flows via microlocal analysis.}}, {Ann. Sci. \'Ec. Norm. Sup\'er. (4)}
  \textbf{49} (2016), no.~3, 543--577 (English).

\bibitem[DZ19]{DZ19}
S.~Dyatlov and M.~Zworski, \emph{Mathematical theory of scattering resonances},
  Graduate Studies in Mathematics, vol. 200, AMS, 2019.

\bibitem[EKL06]{EKL06}
M.~Einsiedler, A.~Katok, and E.~Lindenstrauss, \emph{Invariant measures and the
  set of exceptions to {L}ittlewood's conjecture}, Annals of mathematics
  (2006), 513--560.

\bibitem[EL15]{EL15}
M.~Einsiedler and E.~Lindenstrauss, \emph{On measures invariant under tori on
  quotients of semisimple groups}, Annals of Mathematics (2015), 993--1031.

\bibitem[ELMV09]{ELMV09}
M.~Einsiedler, E.~Lindenstrauss, P.~Michel, and A.~Venkatesh,
  \emph{{Distribution of periodic torus orbits on homogeneous spaces}}, {Duke
  Math. J.} \textbf{148} (2009), no.~1, 119--174 (English).

\bibitem[ELMV11]{ELMV11}
M~Einsiedler, E.~Lindenstrauss, P.~Michel, and A.~Venkatesh,
  \emph{{Distribution of periodic torus orbits and Duke's theorem for cubic
  fields}}, {Ann. Math. (2)} \textbf{173} (2011), no.~2, 815--885 (English).

\bibitem[FRS08]{FRS08}
F.~{Faure}, N.~{Roy}, and J.~{Sj\"ostrand}, \emph{{Semi-classical approach for
  Anosov diffeomorphisms and Ruelle resonances.}}, {Open Math. J.} \textbf{1}
  (2008), 35--81 (English).

\bibitem[FS11]{FS11}
F.~Faure and J.~Sj{\"o}strand, \emph{Upper bound on the density of {R}uelle
  resonances for {A}nosov flows}, Communications in Mathematical Physics
  \textbf{308} (2011), no.~2, 325--364.

\bibitem[GBGHW20]{BGHW20}
Y.~Guedes-Bonthonneau, C.~Guillarmou, J.~Hilgert, and T.~Weich,
  \emph{Ruelle-{T}aylor resonances of {A}nosov actions}, To appear in {J}ournal
  of the {EMS} (2020).

\bibitem[GL06]{Gouezel-Liverani06}
S.~Gou{\"e}zel and C.~Liverani, \emph{Banach spaces adapted to {A}nosov
  systems}, {Ergodic Theory Dyn. Syst.} \textbf{26} (2006), 289--217.

\bibitem[GLP13]{GLP13}
P.~{Giulietti}, C.~{Liverani}, and M.~{Pollicott}, \emph{{Anosov flows and
  dynamical zeta functions}}, {Ann. Math. (2)} \textbf{178} (2013), no.~2,
  687--773.

\bibitem[GN12]{GoNe12}
A.~Gorodnick and A.~Nevo, \emph{Counting lattice points}, {Journal f\"ur die
  reine und angewandte Mathematik} \textbf{663} (2012), 127--176.

\bibitem[GS77]{GS77}
Victor Guillemin and Shlomo Sternberg, \emph{Geometric asymptotics}, Math.
  Surv., vol.~14, American Mathematical Society (AMS), Providence, RI, 1977
  (English).

\bibitem[Has07]{Has07}
B.~Hasselblatt, \emph{Problems in dynamical systems and related topics},
  Dynamics, Ergodic Theory, and Geometry, MSRI Publications \textbf{54} (2007),
  273--325.

\bibitem[HK02]{HaKaHandbook1}
B.~Hasselblatt and A.. Katok, \emph{{Handbook of dynamical systems. Volume
  1A}}, Elsevier, 2002.

\bibitem[H{\"o}r03]{Hoe03}
L.~H{\"o}rmander, \emph{{The analysis of linear partial differential operators.
  I. Distribution theory and Fourier analysis. Reprint of the second (1990)
  edition}}, Springer, Berlin, 2003.

\bibitem[HWW21]{HWW21}
J.~Hilgert, T.~Weich, and L.~Wolf, \emph{Higher rank quantum-classical
  correspondence}, To appear in Analysis and PDE (2021).

\bibitem[KKRH11]{Kalinin-Katok-Rodriguez}
B.~Kalinin, A.~Katok, and F.~Rodriguez~Hertz, \emph{Nonuniform measure
  rigidity}, Annals of Mathematics \textbf{174} (2011), 361--400.

\bibitem[Kni05]{Knieper05}
G.~Knieper, \emph{The uniqueness of the maximal measure for geodesic flows on
  symmetric spaces of higher rank}, Israel. J. Math. \textbf{149} (2005),
  171--183.

\bibitem[KS94]{KaSp94}
A.~Katok and R.~Spatzier, \emph{First cohomology of {A}nosov actions of higher
  rank abelian groups and applications to rigidity}, Publications
  Math{\'e}matiques de l'IH{\'E}S \textbf{79} (1994), 131--156.

\bibitem[KS95]{Katok-Spatzier96}
K.~Katok and R.~Spatzier, \emph{Invariant measures for higher-rank hyperbolic
  abelian actions}, Ergodic Theory Dynam. Systems \textbf{16} (1995), no.~4,
  751--778.

\bibitem[KS98]{Katok-Spatzier98}
A.~Katok and R.~Spatzier, \emph{Corrections to ``{I}nvariant measures for
  higher rank hyperbolic abelian actions''}, {Ergodic Theory Dyn. Syst.}
  \textbf{18} (1998), 503--507.

\bibitem[KS07]{Kalinin-Spatzier}
B.~Kalinin and R.~Spatzier, \emph{On the classification of {C}artan actions},
  GAFA \textbf{17} (2007), no.~2, 468--490.

\bibitem[{Liv}04]{Liv04}
C.~{Liverani}, \emph{{On contact {A}nosov flows}}, {Ann. Math. (2)}
  \textbf{159} (2004), no.~3, 1275--1312.

\bibitem[LY85]{Ledrappier-Young85}
F.~Ledrappier and L.S. Young, \emph{The metric entropy of diffeomorphisms: Part
  {I}: Characterization of measures satisfying {P}esin's entropy formula}, Ann.
  of Math. (2) \textbf{122} (1985), no.~3, 509--539.

\bibitem[Pes04]{Pes04}
Ya. Pesin, \emph{Lectures on partial hyperbolicity and stable ergodicity},
  Zurich lectures in advanced mathematics, EMS, 2004.

\bibitem[PP90]{Parry-Pollicott-90}
W.~Parry and M.~Pollicott, \emph{Zeta functions and the periodic orbit
  structure of hyperbolic dynamics}, Ast\'{e}risque (1990), no.~187-188, 268.
  \MR{1085356}

\bibitem[PS70]{PS70}
C.~{Pugh} and M.~{Shub}, \emph{{The $\Omega$-stability theorem for flows}},
  {Invent. Math.} \textbf{11} (1970), 150--158.

\bibitem[Rok49]{Rok49}
V.~A. Rokhlin, \emph{On the fundamental ideas of measure theory},
  Matematicheskii Sbornik \textbf{67} (1949), no.~1, 107--150.

\bibitem[Rue76]{RuelleSRB}
D.~Ruelle, \emph{A measure associated with axiom-{A} attractors}, American
  journal of mathematics \textbf{98} (1976), no.~3, 619--654.

\bibitem[Sin68]{SinaiSRB}
Y.~Sinai, \emph{Markov partitions and c-diffeomorphisms}, Functional Analysis
  and Its Applications \textbf{2} (1968), no.~1, 61--82.

\bibitem[Spa83]{spatzierthesis}
R.~Spatzier, \emph{Dynamical properties of algebraic systems. {A} study in
  closed geodesics.}, Ph.D. thesis, university of Warwick, 1983.

\bibitem[SV19]{SV19}
R.~Spatzier and K.~Vinhage, \emph{Cartan actions of higher rank abelian groups
  and their classification}, arXiv preprint arXiv:1901.06559 (2019).

\bibitem[SZ07]{Sjostrand-Zworski-Fredholm}
Johannes Sj\"ostrand and Maciej Zworski, \emph{Elementary linear algebra for
  advanced spectral problems}, Annales de l'Institut Fourier \textbf{57}
  (2007), no.~7, 2095--2141 (en). \MR{2394537}

\bibitem[Vin22]{Vin22}
Kurt Vinhage, \emph{Instability for rank one factors of product actions}, arXiv
  preprint arXiv:2203.14480 (2022).

\bibitem[VO16]{VO16}
Marcelo Viana and Krerley Oliveira, \emph{Foundations of ergodic theory}, Camb.
  Stud. Adv. Math., vol. 151, Cambridge: Cambridge University Press, 2016
  (English).

\bibitem[Wei17]{Wei17}
T.~Weich, \emph{On the support of {P}ollicott-{R}uelle resonant states for
  {A}nosov flows}, Ann. Henri Poincar{\'e} \textbf{18} (2017), 37--52.

\bibitem[Wit19]{Wittmann-2019}
Johannes Wittmann, \emph{The {Banach} manifold ${C}^{k}({M},{N})$}, Differ.
  Geom. Appl. \textbf{63} (2019), 166--185 (English).

\bibitem[You95]{LSYoung95}
L.S. Young, \emph{Ergodic theory of differentiable dynamical systems}, Real and
  complex dynamical systems (Branner and Kluwer Acad.~Publ. Hjorth, eds.),
  1995, pp.~293--336.

\bibitem[Zwo12]{Zwo12}
M.~Zworski, \emph{Semiclassical analysis}, vol. 138, American Mathematical
  Society, 2012.

\end{thebibliography}

\end{document}